\def\a{\alpha}
\def\b{\beta}
\def\g{\gamma}
\def\d{\delta}
\def\e{\epsilon}
\def\t{\theta}
\def\T{\Theta}
\def\l{\lambda}
\def\s{\sigma}
\def\ie{\textit{i.e., }}
\def\cf{\textit{cf. }}
\def\RR{\mathbb R}
\def\calA{\mathcal A}
\def\calB{\mathcal B}
\def\fcar{\mathds{1}}
\def\suchthat{\,|\,}
\def\esp{\mathbf E}
\def\prob{\mathbf P}
\def\calN{\mathcal N}
\def\simiid{\overset{iid}{\sim}}
\theoremstyle{plain}
\newtheorem{theorem}{Theorem}
\newtheorem{lemma}{Lemma}
\newtheorem{proposition}{Proposition}
\newtheorem*{theorem*}{Theorem}
\newtheorem*{lemma*}{Lemma}
\newtheorem*{proposition*}{Proposition}
\newtheorem*{corollary*}{Corollary}
\theoremstyle{remark}
\newtheorem*{remark*}{Remark}
\newtheorem*{note*}{Note}
\theoremstyle{definition}
\newtheorem*{definition*}{Definition}
\def\bt{{\boldsymbol \t}}
\def\bY{{\boldsymbol Y}}
\def\bxi{{\boldsymbol \xi}}
\def\bu{{\boldsymbol u}}
\begin{document}

\begin{frontmatter}

\title{Adaptive robust estimation in sparse vector model}
\runtitle{Adaptive robust estimation in sparse vector model}

\begin{aug}
\author{\fnms{L.} \snm{Comminges$^1$}\ead[label=e2]{laetitia.comminges@dauphine.fr}},
\author{\fnms{O.} \snm{Collier$^{2}$}\corref{}\ead[label=e1]{olivier.collier@parisnanterre.fr}},
\author{\fnms{M.} \snm{Ndaoud$^3$}\ead[label=e3]{ndaoudm@gmail.com}}
\and
\author{\fnms{A.B.} \snm{Tsybakov$^3$}\ead[label=e4]{alexandre.tsybakov@ensae.fr}}

\address{CEREMADE, Universit\'e Paris-Dauphine\\ PSL Research University \\ 75016 Paris, France\\ \printead{e2}}
\affiliation{$^1$CEREMADE, Universit\'e Paris-Dauphine, PSL and CREST}

\address{Modal'X, UPL, Universit\'e Paris Nanterre\\ 92000 Nanterre France\\ \printead{e1}}
\affiliation{$^2$Modal'X, UPL, Universit\'e Paris Nanterre and CREST}

\address{CREST (UMR CNRS 9194), ENSAE \\5, av. Henry Le Chatelier, 91764 Palaiseau, France\\ \printead{e3,e4}}
\affiliation{$^3$CREST, ENSAE}

\end{aug}

\runauthor{Comminges, L., Collier, O., Ndaoud, M. and Tsybakov, A.B.}

\begin{abstract} \
For the sparse vector model, we consider estimation of the target vector, of its $\ell_2$-norm and of 
the noise variance. We construct adaptive estimators and establish the optimal rates of adaptive 
estimation when adaptation is considered  with respect to the triplet "noise level -- 
noise distribution -- sparsity". We consider classes of noise distributions with polynomially and
exponentially decreasing tails as well as the case of Gaussian noise.  
The obtained rates turn out to be different from the minimax non-adaptive rates when the triplet 
is known. A crucial issue is the ignorance of the noise variance. Moreover, knowing or not 
knowing the noise distribution can also influence the rate. For example, the rates of 
estimation of the noise variance can differ depending on whether the noise is Gaussian or 
sub-Gaussian without a precise knowledge of the distribution.  Estimation of noise variance 
in our setting can be viewed as an adaptive variant of robust estimation of scale in the 
contamination model, where instead of fixing the "nominal" distribution in advance we assume 
that it belongs to some class of distributions. 
\end{abstract}


\begin{keyword} 
\kwd{sparse vector model}
\kwd{variance estimation}
\kwd{functional estimation}
\kwd{robust estimation}
\kwd{adaptive estimation}
\kwd{minimax rate}
\end{keyword}

\end{frontmatter}


\section{Introduction}

This paper considers estimation of the unknown sparse vector, of its $\ell_2$-norm and of the noise level in the sparse sequence model. The focus is on construction of estimators that are optimally adaptive in a minimax sense with respect to the noise level, to the form of the noise distribution, and to the sparsity. 

We consider the model defined as follows. Let the signal $\bt=(\t_1,\ldots,\t_d)$ be observed with noise of unknown magnitude $\s>0$:
\begin{equation}\label{model}
	Y_i = \t_i + \s\xi_i, \quad i=1,\ldots,d.
\end{equation}
The noise random variables $\xi_1,\ldots,\xi_d$ are assumed to be i.i.d. and we denote by 
 $P_\xi$ the unknown distribution of $\xi_1$. We assume throughout that the noise is zero-mean, 
$\esp(\xi_1)=0$, and  that $\esp(\xi_1^2)=1$, since $\s$ needs to be identifiable. We denote 
by $\prob_{\bt,P_\xi,\s}$ the distribution of $\bY=(Y_1,\dots,Y_d)$ when the signal is $\bt$, the noise level is $\s$ and the distribution of the noise variables is $P_\xi$.  We also denote by $\esp_{\bt,P_\xi,\s}$ the expectation with respect to $\prob_{\bt,P_\xi,\s}$. 

We assume that the signal $\bt$ is $s$-sparse, \ie
\begin{equation*}
	\|\boldsymbol{\t}\|_0 = \sum_{i=1}^d \fcar_{\t_i\neq0} \le s,
\end{equation*}
where $s\in \{1,\dots, d\}$ is an integer. Set $\T_s=\{\bt\in\RR^d\suchthat\|\bt\|_0\le s\}$. We consider the problems of estimating 
$\bt$ under the $\ell_2$ loss,  estimating the variance $\sigma^2$, and estimating the $\ell_2$-norm
 $$\|\boldsymbol{\t}\|_2 = \Big(\sum_{i=1}^d \t_i^2\Big)^{1/2}.$$

This setting arises in several applications (spectroscopy, astronomy, interferometry), as discussed, for example in \cite{DJ}, where a typical observation is an almost zero signal with rare spikes, which is sometimes called a nearly black object.  Theoretical work on this model mainly focuses on the case of Gaussian and sub-Gaussian noise with known $\sigma$ while in practice other types of noise may be relevant and the noise variance is usually unknown. The classical Gaussian sequence model corresponds to the case where the noise $\xi_i$ is standard Gaussian ($P_\xi={\mathcal N}(0,1)$) and the noise level $\s$ is known. Then, the optimal rate of estimation of $\bt$ under the $\ell_2$ loss in a minimax sense on the class $\T_s$ is $\sqrt{s\log(ed/s)}$  and it is attained by thresholding estimators \cite{DJ}. Also, for the Gaussian sequence model with known $\s$, minimax optimal estimator of the norm $\|\bt\|_2$  as well as the corresponding minimax rate are available from~\cite{CollierCommingesTsybakov2017}  (see Table 1). 

In this paper, we study estimation of the three objects $\bt$,  $\|\boldsymbol{\t}\|_2$, and $\s^2$ 
in the following two settings.
\begin{itemize}

\item[(a)] {\it The distribution of $\xi_i$ and the noise level $\s$ are both unknown.} This is our main setting of interest.
For the unknown distribution of $\xi_i$, we consider two types of assumptions. Either $P_\xi$ belongs to a class $\mathcal{G}_{a,\tau}$, \ie for some $a,\tau>0$, 
\begin{equation}\label{definition_subgaussian}
	P_\xi\in\mathcal{G}_{a,\tau} \quad \text{ iff} \quad \esp(\xi_1)=0, \ \esp(\xi_1^2)=1 \ \text{and} \quad \forall t\ge2, \ \prob\big(|\xi_1|>t\big) \le 2 e^{-(t/\tau)^a},
\end{equation}
which includes for example sub-Gaussian distributions ($a=2$), or to a class of distributions with polynomially decaying tails $\mathcal{P}_{a,\tau}$, \ie for some $\tau>0$ and $a\ge  2$, 
\begin{equation}\label{definition_polynomial}
P_\xi\in\mathcal{P}_{a,\tau} \quad \text{ iff } \quad \esp(\xi_1)=0, \ \esp(\xi_1^2)=1  \ \text{and} \quad \forall t\ge2, \ \prob\big(|\xi_1|> t) \le \Big(\frac{\tau}{t}\Big)^a.
\end{equation}
We propose estimators of $\bt$,  $\|\boldsymbol{\t}\|_2$, and $\s^2$ that are optimal in non-asymptotic minimax sense on these classes of distributions and the sparsity class $\Theta_s$. We establish the corresponding non-asymptotic minimax rates. 
They are given in the second and third columns of Table~1. We also provide the minimax optimal estimators.

\item[(b)] {\it Gaussian noise $\xi_i$ and unknown $\s$.} The results on the non-asymptotic minimax 
rates are summarized in the first column of Table 1.
Notice an interesting effect -- the rates of estimation of $\s^2$ and of the norm $\|\bt\|_2$ when the noise is Gaussian are faster than the optimal rates when the noise is sub-Gaussian. This can be seen by comparing the first column of Table 1 with the particular case $a=2$ of the second column corresponding to sub-Gaussian noise.
\end{itemize}

\begin{table}[h!]
       \label{tab:table1}
    \begin{tabular}{|c|c|c|c|c|}\hline
      &{Gaussian noise} & {Noise in class $\mathcal{G}_{a,\tau}$} & {Noise in class $\mathcal{P}_{a,\tau}$}\\
      &               model                  &                       &             \\
      \hline
       &         &    {\multirow{5}{*}{$\sqrt{s}\log^{\frac1a}(ed/s)$}}& {\multirow{5}{*}{$\sqrt{s}(d/s)^{\frac1a}$}}\\ 
     $\bt$ &  $\sqrt{s\log(ed/s)}$ &   {\multirow{9}{*}{}}& {\multirow{5}{*}{}} \\ 
              &known $\s$ \cite{DJ}   &    {\multirow{5}{*}{}}& {\multirow{5}{*}{}}\\ 
              &unknown $\s$ \cite{Verzelen2012}  &    unknown $\s$& unknown $\s$\\ 
                &  &    {\multirow{5}{*}{}}& {\multirow{5}{*}{}}\\ 
                 \hline
                  &    &    {\multirow{5}{*}{}}& {\multirow{5}{*}{}}\\
 $\|\bt\|_2$ &   $\sqrt{s\log(1+\frac{\sqrt{d}}{s})} \wedge d^{1/4}$         &   $\sqrt{s}\log^{\frac1a}(ed/s)\wedge d^{1/4}$  &$\sqrt{s}(d/s)^{\frac1a} \wedge d^{1/4}$\\
      &   known $\s$ \cite{CollierCommingesTsybakov2017}   &    known $\s$ & known $\s$ \\ 
      &   $ \sqrt{s\log(1+\frac{\sqrt{d}}{s})} \vee \sqrt{\frac{s}{1+\log_+(s^2/d)}}$  &    $\sqrt{s}\log^{\frac1a}(ed/s)$ & $\sqrt{s}(d/s)^{\frac1a}$ \\ 
      &  unknown $\s$    &    unknown $\s$ & unknown $\s$ \\ 
       \hline
             &  {\multirow{3}{*}{$\displaystyle{ 
             \frac1{\sqrt{d}} \vee \frac{s}{d(1+\log_+(s^2/d))}}$ 
              }
                    }
                            &  {\multirow{3}{*}{$\displaystyle{\frac1{\sqrt{d}}\vee \frac{s}{d}\log^{\frac{2}{a
                            }}\left(\frac{ed}{s}\right)}$}}  
                            &  {\multirow{3}{*}{$\displaystyle{\frac1{\sqrt{d}} \vee \Big(\frac{s}{d}\Big)^{1-\frac2a}}$}}\\
$\s^2$      &    &    {\multirow{3}{*}{}}& {\multirow{3}{*}{}}\\ 
                 &    &    {\multirow{3}{*}{}}& {\multirow{3}{*}{}}\\ 
      \hline
    \end{tabular}
    \vspace{3mm}
     \caption{\rm Optimal rates of convergence.}
\end{table}

Some comments about Table 1 and additional details are in order. 
\begin{itemize}
\item The difference between the minimax rates for estimation of $\bt$ and estimation of the $\ell_2$-norm $\|\bt\|_2$ turns out to be specific for the pure Gaussian noise model. It disappears for the classes $\mathcal{G}_{a,\tau}$ and $\mathcal{P}_{a,\tau}$.
In particular, if the noise is in one of these classes and $\s$ is unknown, the minimax rate 
for $\|\bt\|_2$ does not have an elbow between the "dense" ($s>\sqrt{d}$) and the "sparse" ($s\le \sqrt{d}$) zones, in contrast to the Gaussian case.
\item For the problem of estimation of variance $\s^2$ with {\it known} distribution of the noise $P_\xi$, we consider a more general setting than (b) mentioned above.  We show that when the noise distribution is  exactly known (and satisfies a rather general assumption, not necessarily Gaussian - can have polynomial tails), then the rate of estimation of $\sigma^2$ can be as fast as $\max\left(\frac1{\sqrt{d}}, \frac{s}{d}\right)$, which is faster than the optimal rate $\max\left(\frac1{\sqrt{d}}, \frac{s}{d}\log\left(\frac{ed}{s}\right)\right)$ for the class of sub-Gaussian noise. In other words, the phenomenon of improved rate is not due to the Gaussian character of the noise but rather to the fact that the noise distribution is known.
\item Our findings show that there is a dramatic difference between the behavior of optimal estimators of $\bt$ in the sparse sequence model and in the sparse linear regression model with "well spread" regressors.  It is known from  \cite{Gautier2013, Belloni2014}  that in sparse linear regression with "well spread" regressors (that is, having positive variance), the rates of estimating $\bt$ are the same for the noise with sub-Gaussian and polynomial tails. In particular, Theorem 1 in \cite{Gautier2013} shows that the self-tuning Dantzig estimator attains sub-Gaussian rates uniformly over the class of symmetric error distributions with bounded absolute moment of order $2+\delta$ for $\delta>0$. 
We show that the situation is quite different in the sparse sequence model, where the optimal rates are much slower and depend on the polynomial index of the noise. 
\end{itemize}

%

We conclude this section by a discussion of related work.  \citet*{ChenGaoRen2015} explore the problem of robust estimation of variance and of covariance matrix under Hubers's contamination model. As explained in Section \ref{sec:variance} below, this problem has similarities with estimation of noise level in our setting. The main difference is
that instead of fixing in advance the Gaussian nominal distribution of the contamination model we assume that it belongs to a class of distributions, such as \eqref{definition_subgaussian} or \eqref{definition_polynomial}. Therefore, 
the corresponding results in Section \ref{sec:variance} can be viewed as results  on 
robust estimation of scale where, in contrast to the classical setting, we are interested in adaptation to the unknown nominal law. Another aspect of robust estimation of scale is analyzed by \citet{Minsker and Wei2017} who consider classes of distributions similar to $\mathcal{P}_{a,\tau}$ rather than the contamination model. The main aim in \cite{Minsker and Wei2017} is to construct estimators having sub-Gaussian deviations under weak moment assumptions.
Our setting is different in that we consider the sparsity class $\T_s$ of vectors 
$\bt$ and the rates that we obtain depend on $s$. Estimation of variance in sparse linear model 
is discussed in \cite{SunZhang2012} where some
upper bounds for the rates are given.  
We also mention the recent paper \cite{GolubevKrymova2017} that deals with estimation 
of variance in linear regression in a framework that does not involve sparsity, as well as 
the work on estimation of signal-to-noise ratio functionals in settings involving sparsity \cite{VerzelenGassiat2018,GuoCai2018} 
and not involving sparsity \cite{JansonBarberCandes2017}. Papers \cite{CollierCommingesTsybakovVerzelen2017,CarpentierVerzelen2019} discuss estimation 
of other functionals than the $\ell_2$-norm $\|\bt\|_2$ in the sparse vector model when the noise is Gaussian with 
unknown variance, while \cite{Carpentier at al2019} considers estimation of $\|\bt\|_2$ in sparse linear regression when the noise and covariates are Gaussian and the noise variance is known.

{\bf Notation.} For $x>0$, let $\lfloor  x \rfloor$ denote the maximal integer smaller than $x$. 
For a finite set $A$, we denote by $|A|$ its cardinality. Let $\inf_{\hat{T}}$ denote the infimum over all estimators.
The notation $C$, $C^{\prime}$,$c$, $c^{\prime}$ will be used 
for positive constants that can depend only $a$ and 
$\tau$ and can vary from line to line.

\section{Estimation of sparse vector $\bt$}\label{sec:vector}

In this section, we study the problem of  estimating  a sparse vector $\bt$ in $\ell_2$-norm when the variance of noise $\s$ and the  distribution of $\xi_i$ are both unknown. We only assume that the noise distribution belongs to a given class, which can be either a class of distributions with polynomial tails $\mathcal{P}_{a,\tau} $, or a class $\mathcal{G}_{a,\tau} $ with exponential decay of the tails.

First, we introduce a preliminary estimator  $\tilde{\s}^2$ of $\s^2$ that will be used to define an estimator of~$\bt$.  
Let $\g\in(0,1/2]$ be a constant that will be chosen small enough and depending only on $a$ and~$\tau$. 
Divide $\{ 1,\ldots, d\}$ into $m=\lfloor  \g d \rfloor$ disjoint subsets $B_1,\dots,B_m$, each 
of cardinality $|B_i|\ge k:=\lfloor d/m \rfloor \ge {1}/{\g}-1$. 
 Consider the median-of-means estimator 
 \begin{equation}\label{mom}
  \tilde{\s}^2 = {\sf med}(\bar{\s}_1^2,\dots, \bar{\s}_m^2), 
\ \text{where} \ \bar{\s}_i^2= \frac{1}{|B_i|} \sum_{j\in B_i} Y_j^2, \quad i=1,\dots,m.
 \end{equation}
 Here, ${\sf med}(\bar{\s}_1^2,\dots, \bar{\s}_m^2)$ denotes the median of $\bar{\s}_1^2,\dots, \bar{\s}_m^2$. 
The next proposition shows that the estimator $\tilde{\s}^2$ recovers $\s^2$
to  within a constant factor. 

\begin{proposition}\label{proposition_over}
{
 	Let $\tau>0,a>2$. There exist  constants $\g\in(0,1/2]$, $c>0$ and $C>0$  depending only on $a$ and $\tau$  such that for any integers  $s$ and $d$ satisfying $1\le s<  \lfloor \g d\rfloor /4$  we have
		\begin{equation*}
		\inf_{P_\xi\in\mathcal{P}_{a,\tau}} \inf_{\s>0} \inf_{\|\bt\|_0\le s}\prob_{\bt,P_\xi,\s} \Big( 1/2\le  \frac{\tilde{\s}^2}{\s^2}\le 3/2\Big) \ge  1-\exp(-c d),
	\end{equation*}
	\begin{equation*}
		\sup_{P_\xi\in\mathcal{P}_{a,\tau}} \sup_{\s>0} \sup_{\|\bt\|_0\le s} \frac{\esp_{\bt,P_\xi,\s} \left| \tilde{\s}^2 - \s^2 \right|}{\s^{2}} \le  C,
	\end{equation*}
	and for $a>4$,
	\begin{equation*}
		\sup_{P_\xi\in\mathcal{P}_{a,\tau}} \sup_{\s>0} \sup_{\|\bt\|_0\le s}\frac{\esp_{\bt,P_\xi,\s} \left( \tilde{\s}^2 - \s^2 \right)^{2}}{\s^{4}} \le  C.
	\end{equation*}
	}
\end{proposition}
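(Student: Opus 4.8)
The plan is to combine the robustness of the median with uniform control, over the whole class $\mathcal P_{a,\tau}$, of the block averages on the blocks carrying no signal. First I would reduce to $\s=1$: replacing $Y_j$ by $Y_j/\s$ turns $\bt$ into the still $s$-sparse vector $\bt/\s$, the noise level into $1$, and multiplies $\tilde\s^2$ by $\s^{-2}$, so it suffices to prove, uniformly over $\bt\in\T_s$ and $P_\xi\in\mathcal P_{a,\tau}$, that $\prob_{\bt,P_\xi,1}(1/2\le\tilde\s^2\le3/2)\ge1-e^{-cd}$, $\esp_{\bt,P_\xi,1}|\tilde\s^2-1|\le C$, and (for $a>4$) $\esp_{\bt,P_\xi,1}(\tilde\s^2-1)^2\le C$. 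Write $M=\tilde\s^2$, and recall $m=\lfloor\g d\rfloor$, $|B_i|\ge k=\lfloor d/m\rfloor\ge1/\g-1$, and that the hypothesis $s<m/4$ forces $m\ge5$ and $d\ge5/\g$. Call a block $B_i$ \emph{clean} if $\t_j=0$ for all $j\in B_i$; since $\|\bt\|_0\le s<m/4$, at most $s<m/4$ blocks fail to be clean, and on a clean block $\bar\s_i^2=Z_i:=|B_i|^{-1}\sum_{j\in B_i}\xi_j^2$, with $\esp Z_i=1$. The variables $Z_1,\dots,Z_m$ are independent.

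Next I would record the uniform moment bounds. With $\delta=(a-2)/2>0$ and $\b=1+\delta/2\in(1,2)$, the tail assumption \eqref{definition_polynomial} gives $\sup_{P_\xi\in\mathcal P_{a,\tau}}\esp|\xi_1|^{2+\delta}<\infty$, hence $C_0:=\sup_{P_\xi}\esp|\xi_1^2-1|^\b<\infty$, and the von Bahr--Esseen inequality yields $\esp|Z_i-1|^\b\le2|B_i|^{1-\b}C_0\le2C_0k^{-\delta/2}$, so that $\prob(|Z_i-1|\ge1/2)\le2^{\b+1}C_0k^{-\delta/2}=:q$. When $a>4$ one also has $v_0:=\sup_{P_\xi}\var(\xi_1^2)<\infty$, so $\var(Z_i)\le v_0/k$. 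Since $C_0,v_0,\delta,\b$ depend only on $a,\tau$, I may now fix $\g\in(0,1/2]$ small enough (hence $k$ large enough), depending only on $a,\tau$, so that $q\le2^{-8}$, $2^{\b+5}C_0k^{-\delta/2}\le1$, and $16v_0\le k$ hold simultaneously.

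For the probability bound, a standard property of the median shows that if $M\notin[1/2,3/2]$ then strictly more than $m/4$ clean blocks have $|Z_i-1|>1/2$ (using that at most $s<m/4$ blocks are not clean). The number $N$ of such clean blocks is a sum of independent $\{0,1\}$ variables with mean $\le q$, so a union bound over the $\binom{m}{\lceil m/4\rceil}\le2^m$ relevant subsets gives $\prob(N\ge\lceil m/4\rceil)\le2^mq^{\lceil m/4\rceil}\le(16q)^{m/4}\le2^{-m}$, and since $m>\g d-1$ this is at most $e^{-cd}$ with $c=\g(\ln2)/2$ (an elementary check, using $d\ge5/\g$). For the moment bounds, note $M\ge0$ because all $\bar\s_i^2\ge0$; hence $\esp(1-M)_+=\int_0^1\prob(M\le t)\,dt\le\tfrac12+\int_0^{1/2}\prob(M\le t)\,dt\le1$, since on $[0,1/2]$ the same counting argument yields $\prob(M\le t)\le2^{-m}$. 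Also $\esp(M-1)_+=\int_1^\infty\prob(M\ge t)\,dt\le1+\int_2^\infty\prob(M\ge t)\,dt$, and for $t\ge2$ the counting argument gives $\#\{\text{clean }i:Z_i\ge t\}\ge\lceil m/4\rceil$ whenever $M\ge t$, so, bounding $\prob(Z_1\ge t)\le\esp|Z_1-1|^\b/(t-1)^\b\le2^{\b+1}C_0k^{-\delta/2}t^{-\b}$ for $t\ge2$,
\[
\prob(M\ge t)\le2^m\prob(Z_1\ge t)^{m/4}\le\big(2^{\b+5}C_0k^{-\delta/2}t^{-\b}\big)^{m/4}\le t^{-\b m/4}\le t^{-5\b/4},
\]
which is integrable on $[2,\infty)$; this gives $\esp|M-1|\le C$. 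When $a>4$, for $t\ge1$ we have $\prob(M<1-t)=0$ and, by Chebyshev, $\prob(Z_1\ge1+t)\le v_0/(kt^2)$, so $\prob(M\ge1+t)\le\big(16v_0/(kt^2)\big)^{m/4}\le t^{-m/2}\le t^{-5/2}$; hence $\esp(M-1)^2=2\int_0^\infty t\,\prob(|M-1|>t)\,dt\le1+2\int_1^\infty t^{-3/2}\,dt\le C$.

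The main obstacles are threefold. First, the uniformity over the infinite class $\mathcal P_{a,\tau}$ of the moment constants $\esp|\xi_1|^{2+\delta}$, and of $\var(\xi_1^2)$ for the last bound --- this is exactly where \eqref{definition_polynomial} enters, and it is the reason the second-moment bound requires $a>4$. Second, in the regime $2<a\le4$ the variance of $\xi_1^2$ may be infinite, so the clean-block averages must be controlled via fractional ($\b$-th, $\b<2$) moments and von Bahr--Esseen rather than by Chebyshev. Third, one must fix a single $\g$ (equivalently, a single lower bound on the block length $k$), depending only on $a$ and $\tau$, making the per-block failure probability $q$ tiny and $16v_0\le k$ at once, while the constraint $s<m/4$ is what guarantees $m\ge5$, so that the exponent $m/4$ exceeds $1$ and all the tail integrals above converge.
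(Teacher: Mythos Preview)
Your proof is correct and follows the same core strategy as the paper: use the median-of-means estimator, isolate the clean blocks, control the per-block failure probability via the von Bahr--Esseen inequality (this is precisely how the paper handles the case $2<a\le4$ where $\var(\xi_1^2)$ may be infinite), and then argue that the median can only escape $[1/2,3/2]$ if too many clean blocks fail.

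The differences are purely tactical. For the probability bound, the paper bounds the number of failing clean blocks via Hoeffding's inequality applied to the indicators $\eta_i=\fcar_{\bar\s_i^2\notin I}$ with $\esp\eta_i\le 1/4$, whereas you use the cruder union bound $\binom{m}{\lceil m/4\rceil}q^{\lceil m/4\rceil}$, compensating by taking $q$ much smaller ($2^{-8}$ instead of $1/4$). For the moment bounds, the paper splits $\esp Z^2=\esp(Z^2\fcar_{Z<\s^2/2})+\esp(Z^2\fcar_{Z\ge\s^2/2})$, then controls the second piece by H\"older using a crude bound $\tilde\s^{4r}\le\sum_{i\in J}\bar\s_i^{4r}$ (valid because $m\ge4s$ forces the median to lie among the clean-block values) together with the exponential tail $\prob(Z\ge\s^2/2)\le e^{-cd}$; you instead re-run the counting argument at every level $t$ to get a polynomial tail $\prob(M\ge t)\le t^{-\b m/4}$ and integrate directly. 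Your route is a bit more self-contained (no H\"older, no separate moment bound on $\tilde\s^{4r}$), at the cost of a more restrictive choice of $\gamma$. One small wording point: the constraint $16v_0\le k$ only makes sense when $a>4$, so your choice of $\gamma$ should be stated as depending on whether you are proving the first two bounds or the third.
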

Note that the result of Proposition~\ref{proposition_over} also holds for the class $\mathcal{G}_{a,\tau}$ for all $a>0$ and $\tau>0$. Indeed, $\mathcal{G}_{a,\tau}\subset\mathcal{P}_{a,\tau}$ for all $a> 2$ and $\tau>0$, while for any $0<a\le 2$ and $\tau>0$, there exist $a'> 4$ and $\tau'>0$ such that $\mathcal{G}_{a,\tau}\subset\mathcal{P}_{a',\tau'}$.

We further note that assuming $s< cd$ for some $0<c<1$ is natural in the context of variance estimation
since $\s$ is not identifiable when $s=d$. In what follows, all upper 
bounds on the risks of estimators will be obtained under this assumption. 

Consider now an estimator ${\hat \bt}$  defined as follows:
\begin{equation}\label{def_estimateur_mom}
	\hat{\bt} \in \text{arg}\min_{\bt\in \RR^d} \Big(\sum_{i=1}^d (Y_i-\t_i)^2+\tilde{\s}\|\bt\|_*\Big).
\end{equation}
Here,  $\|\cdot\|_*$ denotes the sorted $\ell_1$-norm:
 \begin{equation}\label{sorted_norm}
\|\bt\|_*=\sum_{i=1}^d \l_i |\t|_{(d-i+1)},
\end{equation}
 where $|\t|_{(1)}\le \cdots\le |\t|_{(d)}$ are the order statistics of  $|\t_1|,\ldots,|\t_d|$,  and $\l_1\ge\cdots\ge\l_p>0$ are tuning parameters.
 
Set
 \begin{equation}\label{opt_rate}
\phi_{\sf exp}^*(s,d)=\sqrt{s}\log^{1/a}(ed/s), \qquad \phi_{\sf pol}^*(s,d)= \sqrt{s}(d/s)^{1/a}.
\end{equation}
The next theorem shows that $\hat{\bt}$ estimates $\bt$  with the rates $\phi_{\sf exp}^*(s,d)$ and $\phi_{\sf pol}^*(s,d)$ when the noise distribution belongs to the class $\mathcal{G}_{a,\tau} $ and class $\mathcal{P}_{a,\tau} $, respectively.
\begin{theorem}\label{theorem_adaptiveupperbound}
	Let $s$ and $d$ be integers satisfying $1\le s< \lfloor \g d\rfloor/4$ where 
$\g\in(0,1/2]$ is the tuning parameter in the definition of $\tilde \s^2$. 
Then for the estimator $\hat{\bt}$ defined by \eqref{def_estimateur_mom} the following holds.
\begin{enumerate}
{
\item	 Let $\tau>0$, $a>0$.  There exist constants
	 $c,C>0$ and $\g\in(0,1/2]$ depending only on $(a,\tau)$  such that if 
$\l_j=c  \log^{1/a}(ed/j),  j=1,\ldots, d,$ we have
	\begin{equation*}
		\sup_{ P_\xi\in\mathcal{G}_{a,\tau}}\sup_{\s>0}\sup_{\|\bt\|_0\le s} 
		\frac{\esp_{\bt,P_\xi,\s}
		\left(\|\hat{\bt}-\bt\|^{2}_2 \right)}{\s^{2}} \le C\left(\phi_{\sf exp}^{*}(s,d)\right)^2.	
	\end{equation*}
\item	 Let $\tau>0,a>2$. There exist constants
	 $c,C>0$ and $\g\in(0,1/2]$ depending only on $(a,\tau)$ such that if 	
$\l_j=c  ({d}/{j})^{1/a},  j=1,\ldots, d,$ we have
	\begin{equation*}
		\sup_{ P_\xi\in\mathcal{P}_{a,\tau}}\sup_{\s>0}\sup_{\|\bt\|_0\le s} 
		\frac{\esp_{\bt,P_\xi,\s}
		\left(\|\hat{\bt}-\bt\|^{2}_2 \right)}{\s^{2}} \le C\left(\phi_{\sf pol}^{*}(s,d)\right)^2.	
	\end{equation*}
}
\end{enumerate}
\end{theorem}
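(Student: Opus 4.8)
The plan is to treat both claims in a unified way by exploiting the structure of the sorted-$\ell_1$ (SLOPE) penalty together with the concentration of the preliminary estimator $\tilde\s^2$ from Proposition~\ref{proposition_over}. First I would condition on the event $\mathcal{E}=\{1/2\le \tilde\s^2/\s^2\le 3/2\}$, which by Proposition~\ref{proposition_over} has probability at least $1-\exp(-cd)$ and on which $\tilde\s$ is equivalent to $\s$ up to absolute constants. On the complement, one controls the contribution to the risk crudely: $\esp\|\hat\bt-\bt\|_2^2\fcar_{\mathcal{E}^c}$ is bounded using the definition of $\hat\bt$ (it cannot do much worse than $\bt=0$, giving a bound polynomial in $d$ times $\sum Y_i^2$) against the exponentially small probability of $\mathcal{E}^c$; the moment bounds on $\tilde\s^2$ in Proposition~\ref{proposition_over} and on $\sum Y_i^2$ make this term negligible relative to $\s^2(\phi^*)^2$. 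So the core of the argument is a deterministic oracle inequality for SLOPE on the event $\mathcal{E}$.

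Next I would establish the standard SLOPE basic inequality: from the optimality of $\hat\bt$ in \eqref{def_estimateur_mom}, with $\bxi$ the noise vector scaled by $\s$, one gets
\begin{equation*}
\|\hat\bt-\bt\|_2^2 \le 2\s\sum_{i=1}^d \xi_i (\hat\t_i-\t_i) + \tilde\s\big(\|\bt\|_* - \|\hat\bt\|_*\big).
\end{equation*}
The second term is handled by the sorted-norm inequality $\|\bt\|_*-\|\hat\bt\|_* \le \sum_{i\le s}\l_i |\hat\t-\t|_{(d-i+1)} - \sum_{i>s}\l_i|\hat\t-\t|_{(d-i+1)}$ valid because $\bt$ is $s$-sparse. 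For the stochastic term I would use the by-now-classical bound (as in Bellec--Lecu\'e--Tsybakov / Su--Cand\`es) that, with high probability, $\sum_i \xi_i u_{(i)}$ is dominated by $C\big(\sum_{i=1}^d \l_i^2\big)^{1/2}\|\bu\|_2$ for a suitable choice $\l_i \asymp$ the $(1-i/(2d))$-quantile bound of $|\xi_1|$; here the tail assumptions \eqref{definition_subgaussian} and \eqref{definition_polynomial} give exactly $\l_i \asymp \log^{1/a}(ed/i)$ and $\l_i\asymp (d/i)^{1/a}$ respectively. This is the step where the two regimes diverge and where I expect the \textbf{main obstacle}: controlling $\max_k \frac{1}{(\sum_{i\le k}\l_i^2)^{1/2}}\sum_{i\le k}\xi_{(i)}^{\downarrow}$ uniformly over $k$ requires a careful peeling/union-bound argument over the rearranged noise, and in the polynomial case the variables $\xi_i$ have only $a$ moments, so one must argue via the quantile transform rather than exponential Chernoff bounds, keeping track that the resulting deviation term is still absorbed.

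Finally, combining the basic inequality with the stochastic bound and the sorted-norm decomposition, and using $\tilde\s \le C\s$ on $\mathcal{E}$, yields on $\mathcal{E}$ an inequality of the form $\|\hat\bt-\bt\|_2^2 \le C\s\big(\sum_{i=1}^s \l_i^2\big)^{1/2}\|\hat\bt-\bt\|_2 + (\text{lower-order})$, whence $\|\hat\bt-\bt\|_2^2 \le C\s^2 \sum_{i=1}^s \l_i^2$. It remains to compute $\sum_{i=1}^s \l_i^2$: for $\l_i = c\log^{1/a}(ed/i)$ one checks $\sum_{i=1}^s \log^{2/a}(ed/i) \asymp s\log^{2/a}(ed/s)= \big(\phi_{\sf exp}^*(s,d)\big)^2$, and for $\l_i=c(d/i)^{1/a}$ with $a>2$ one has $\sum_{i=1}^s (d/i)^{2/a} \asymp s (d/s)^{2/a} = \big(\phi_{\sf pol}^*(s,d)\big)^2$ (the series $\sum i^{-2/a}$ converging to a partial sum of order $s^{1-2/a}$ since $2/a<1$). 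Taking expectations, adding back the negligible $\mathcal{E}^c$ contribution, and adjusting constants gives the two stated bounds.
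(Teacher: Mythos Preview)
Your overall plan matches the paper's: both use the SLOPE basic inequality from Bellec--Lecu\'e--Tsybakov together with the sorted-norm decomposition $\|\bt\|_*-\|\hat\bt\|_*\le(\sum_{j\le s}\l_j^2)^{1/2}\|\bu\|_2-\sum_{j>s}\l_j|u|_{(d-j+1)}$, and both reduce the stochastic term via the rearrangement inequality $|\sum_j\xi_j u_j|\le\sum_j|\xi|_{(d-j+1)}|u|_{(d-j+1)}$. The final computation $\sum_{j\le s}\l_j^2\asymp(\phi^*)^2$ is also the same.

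The substantive difference---and the place where your plan has a gap---is in how one passes to an expectation bound in the polynomial case. You propose to condition on $\mathcal{E}=\{1/2\le\tilde\s^2/\s^2\le3/2\}$ and on a stochastic event of the type $\{|\xi|_{(d-j+1)}\le C\l_j\text{ for all }j\}$, then handle the complement crudely. For $\mathcal{E}$ this works because $\prob(\mathcal{E}^c)\le e^{-cd}$. But in the polynomial case (Lemma~\ref{lemma_esp4_polynomial}) the uniform order-statistics event only satisfies $\prob(|\xi|_{(d-j+1)}\le u(d/j)^{1/a}\ \forall j)\ge 1-2e\tau^a/u^a$, a \emph{constant} bounded away from~$1$, not tending to~$1$ with~$d$. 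On its complement, the crude a~priori bound you can extract from the basic inequality (via $\|\bt\|_*-\|\hat\bt\|_*\le\|\bu\|_*\le\l_1\sqrt{d}\,\|\bu\|_2$) gives $\|\bu\|_2^2$ of order $\s^2\|\bxi\|_2^2+\tilde\s^2\l_1^2 d$, whose expectation is of order $\s^2 d^{1+2/a}$; multiplied by a fixed constant probability this swamps the target $\s^2 s(d/s)^{2/a}$.

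The paper avoids this by \emph{not} conditioning globally. After deriving
\[
\|\bu\|_2^2\le C\Big(\s^2\sum_{j\le s}|\xi|_{(d-j+1)}^2+\tilde\s^2\sum_{j\le s}\l_j^2+\sum_{j>s}\big(2\s|\xi|_{(d-j+1)}-\tilde\s\l_j\big)_+^2\Big),
\]
it takes expectations directly. The first sum is handled by the moment bounds $\esp|\xi|_{(d-j+1)}^2\le C\l_j^2$. For the third sum, the key is that $(2\s|\xi|_{(d-j+1)}-\tilde\s\l_j)_+$ vanishes on the event $\mathcal{A}_j=\{|\xi|_{(d-j+1)}\le\l_j/4\}\cap\mathcal{E}$, so one bounds each term via H\"older, $\esp[|\xi|_{(d-j+1)}^2\fcar_{\mathcal{A}_j^c}]\le(\esp|\xi|_{(d-j+1)}^{2r})^{1/r}\prob(\mathcal{A}_j^c)^{1-1/r}$ with $1<r<a/2$. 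Crucially, $\prob(\mathcal{A}_j^c)\le e^{-cd}+q^j$ with $q<1/2$, so $\sum_{j>s}\prob(\mathcal{A}_j^c)^{1-1/r}$ is \emph{summable} and the whole tail contributes only $O(\s^2\l_s^2)$, which is absorbed into $\s^2\sum_{j\le s}\l_j^2$. This termwise treatment---exploiting that the $j$-th order-statistic event has probability decaying geometrically in~$j$---is what makes the argument go through under polynomial tails, and it is the ingredient your conditioning scheme is missing.
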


Furthermore, it follows from the lower bound of Theorem~\ref{theorem_lowerbound_norm_subgaussian}
in Section \ref{sec:l2_norm} that the rates $\phi_{\sf exp}^*(s,d)$ and $\phi_{\sf pol}^*(s,d)$ cannot be improved in a minimax sense. Thus, the estimator $\hat{\bt}$ defined in \eqref{def_estimateur_mom} achieves the optimal rates in a minimax sense. 

Note that the rates in Theorem \ref{theorem_adaptiveupperbound} and also all the rates shown in Table 1 for the classes $\mathcal{G}_{a,\tau}$ and $\mathcal{P}_{a,\tau}$ are achieved on estimators that are adaptive to the sparsity index $s$. Thus, knowing or not knowing $s$ does not influence the optimal rates of estimation when the distribution of $\xi$ and the noise level are unknown.

From Theorem \ref{theorem_adaptiveupperbound}, we can conclude that the optimal rate $\phi_{\sf pol}^*$ under  polynomially decaying noise is very different from the optimal rate $\phi_{\sf exp}^*$ under exponential tails, in particular, from the rate under the sub-Gaussian noise. At first sight, this phenomenon seems to contradict some results in the literature on sparse regression model. 
Indeed, \citet*{Gautier2013}  consider sparse linear regression with unknown noise level $\s$ and show that the Self-Tuned Dantzig estimator can achieve the same rate as in the case of Gaussian noise (up to a logarithmic factor) under the assumption that the noise is symmetric and has only a bounded moment of order $a>2$. \citet*{Belloni2014} show for the same model that a square-root Lasso estimator achieves analogous behavior under the assumption that the noise has a bounded moment of order $a>2$. However, a crucial condition in 
\cite{Belloni2014} is that the design is "well spread", that is all components of the design vectors are random with positive variance. The same type of condition is needed in \cite{Gautier2013} to obtain a sub-Gaussian rate. This condition of "well spreadness" is not satisfied in the sparse sequence model that we are considering here. In this model viewed as a special case of linear regression, the design is deterministic, with only one non-zero component. We see that such a degenerate design turns out to be the least favorable from the point of view of the convergence rate, while the "well spread" design is the best one. An interesting general conclusion of comparing our findings to \cite{Gautier2013} and \cite{Belloni2014} is that the optimal rate of convergence of estimators under sparsity when the noise level is unknown depends dramatically on the properties of the design. There is a whole spectrum of possibilities between the degenerate and "well spread" designs where a variety of new rates can arise depending on the properties of the design. Studying them remains an open problem.

\section{Estimation of the $\ell_2$-norm}\label{sec:l2_norm}

In this section, we consider the problem of estimation of the $\ell_2$-norm of a sparse vector when the variance of the noise and the form of its distribution are both unknown. We show
that 
the rates $\phi_{\sf exp}^*(s,d)$ and $\phi_{\sf pol}^*(s,d)$ are optimal in a minimax sense on the classes $\mathcal{G}_{a,\tau}$ and $\mathcal{P}_{a,\tau}$, respectively.  We first provide a lower bound on the risks of any estimators of the $\ell_2$-norm when the noise level $\s$ is unknown and the unknown noise distribution $P_\xi$ belongs either to $\mathcal{G}_{a,\tau}$ or $\mathcal{P}_{a,\tau}$.
{
We denote by $\mathcal L$ the set of all monotone non-decreasing functions $\ell:[0, \infty)\to [0, \infty)$ such that
$\ell(0)=0$ and $\ell\not\equiv 0$. 
}
\begin{theorem}\label{theorem_lowerbound_norm_subgaussian}
Let $s,d$ be integers satisfying $1\le s\le d$. Let $\ell(\cdot)$ be any loss function in the class $\mathcal L$.  Then, for any $a>0,\tau>0$, 	
	\begin{equation}\label{lowerbound1:norm}
	\inf_{\hat{T}}  \sup_{P_\xi\in\mathcal{G}_{a,\tau}} \sup_{\s>0} \sup_{\|\bt\|_0\le s} 
	\esp_{\bt,P_\xi,\s}\,\ell \Big( c(\phi_{\sf exp}^*(s,d))^{-1} \Big| 
\frac{\hat{T}-\|\bt\|_2}{\s}\Big| \Big)\ge c',
	\end{equation}
	and, for any $a\ge 2,\tau>0$, 
	\begin{equation}\label{lowerbound2:norm}
	\inf_{\hat{T}}  \sup_{P_\xi\in\mathcal{P}_{a,\tau}} \sup_{\s>0} \sup_{\|\bt\|_0\le s} 
	\esp_{\bt,P_\xi,\s}\,\ell \Big( \bar c(\phi_{\sf pol}^*(s,d))^{-1} \Big| 
\frac{\hat{T}-\|\bt\|_2}{\s}\Big| \Big)\ge \bar c'.
	\end{equation}
Here,  $\inf_{\hat{T}}$ denotes the infimum over all estimators, and  $c, \bar c>0$, $c', \bar 
c'>0$ are constants that can depend only on $\ell(\cdot)$, $\tau$ and $a$.	
\end{theorem}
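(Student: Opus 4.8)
\textbf{Proof plan for Theorem \ref{theorem_lowerbound_norm_subgaussian}.}
The strategy is the classical reduction from estimation to testing between two (composite) hypotheses, combined with a construction that \emph{hides a sparse signal inside the scale and the tail of the noise}. I describe it for the polynomial class; the exponential case is identical with the spike size chosen differently. Since $\ell\in\calL$ is non-decreasing and $\ell\not\equiv0$, there is $t_1\ge0$ with $\ell(t)>0$ for all $t>t_1$. By Markov's inequality it suffices to exhibit two priors: $\pi_0$ on $\{\bt=0\}$ with noise parameters $(P_0,\s_0)$, and $\pi_1$ on $\Theta_s$ with $\|\bt\|_2\ge c_2\,\s_1\,\phi_{\sf pol}^*(s,d)$ on an event of probability $\ge 1-\beta''$ and noise parameters $(P_1,\s_1)$ with $\s_1/\s_0$ bounded away from $0$ and $\infty$, such that the two marginal laws of $\bY$ are within total variation $\beta<1$. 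A standard two-point argument (compare $\hat T$ with $0$ under $\pi_0$ and with $\|\bt\|_2$ under $\pi_1$, via the event $\{\hat T<\tfrac{c_2}{2}\s_1\phi_{\sf pol}^*\}$) then yields \eqref{lowerbound2:norm} with $\bar c'$ depending only on $\ell,\beta,\beta'',c_2$, once the constant $\bar c$ in front of $\phi_{\sf pol}^*$ is taken small enough (depending on $a,\tau$) that $\bar c^{-1}c_2>t_1$; the bound \eqref{lowerbound1:norm} follows the same way.

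\emph{Construction.} Let $\rho_1,\dots,\rho_d$ and $\e_1,\dots,\e_d$ be i.i.d.\ Rademacher and $B_1,\dots,B_d$ i.i.d.\ $\mathrm{Bernoulli}(p)$ with $p=s/(2d)$, all independent. Fix $\mu=\bar c\,(d/s)^{1/a}$ with $\bar c\in(0,1)$ small and put $c_1^2=1-p\mu^2$. Under $\pi_0$: $\bt=0$, $\s_0=1$, and noise variable $\eta=c_1\rho+\mu\,\e\,B$, which has mean $0$ and variance $1$, so $Y_i=\eta_i$. Under $\pi_1$: $\s_1=c_1$, noise variable $\rho$ (Rademacher, trivially in every class), and the random vector $\bt$ with independent coordinates $\t_i=\mu\,\e_i\,B_i$, so that $Y_i=\t_i+c_1\rho_i=\mu\e_iB_i+c_1\rho_i$ has \emph{exactly} the same law as under $\pi_0$. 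Since $\|\bt\|_0=\sum_iB_i\sim\mathrm{Bin}(d,p)$ may exceed $s$, we replace $\pi_1$ by its conditional version given $\{s/4\le\|\bt\|_0\le s\}$; by Chernoff bounds this event has probability bounded below and its complement has probability $\beta<1$ \emph{uniformly in $1\le s\le d$}, so the $\bY$-marginal is perturbed by at most $\beta$ in total variation, and on it $\|\bt\|_2/\s_1=(\mu/c_1)\sqrt{\|\bt\|_0}\ge(\mu/c_1)\sqrt{s/4}\asymp\phi_{\sf pol}^*(s,d)$, giving the required separation.

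\emph{Membership in the classes.} It remains to check $P_0=\mathrm{Law}(\eta)\in\mathcal{P}_{a,\tau}$. Since $|\eta|\le c_1+\mu< 1+\mu$, the tail vanishes for $t\ge1+\mu$; for $2\le t<1+\mu$ we have $|\eta|>t\ge2>c_1|\rho|$, which forces $B=1$, hence $\prob(|\eta|>t)\le p=s/(2d)$. Thus $P_0\in\mathcal{P}_{a,\tau}$ provided $s/(2d)\le(\tau/(1+\mu))^a$, i.e.\ $1+\mu\le\tau(2d/s)^{1/a}$, which holds for $\bar c$ small whenever $d/s$ exceeds a constant $C=C(a,\tau)$. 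For $\mathcal{G}_{a,\tau}$ one takes $\mu=\bar c\log^{1/a}(ed/s)$, the tail condition becoming $s/(2d)\le 2e^{-((1+\mu)/\tau)^a}$, again valid for $\bar c$ small and $d/s\ge C$. The remaining regime $1\le s/d< 1/C$ (in particular $d/s$ bounded, so $\phi_{\sf pol}^*\asymp\phi_{\sf exp}^*\asymp\sqrt s$) is handled by the degenerate variant with a bounded spike $\mu\equiv1$: then $|\eta|=|c_1\rho+\e B|< 2$ always, so $P_0$ belongs to both classes irrespective of $\tau$, while $\|\bt\|_2/\s_1\asymp\sqrt{\|\bt\|_0}\asymp\sqrt s$ on the good event — exactly the target order there.

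\emph{Main obstacle.} The heart of the matter is the simultaneous system of constraints on the spike magnitude $\mu$: it must be \emph{large}, $\mu\gtrsim\phi^*(s,d)/\sqrt s$, to make $\|\bt\|_2$ of the claimed order; it must be \emph{small enough to renormalise the noise variance}, $p\mu^2<1$; and it must \emph{respect the tail of the class}, $p\lesssim(\tau/\mu)^a$ for $\mathcal P_{a,\tau}$ (resp.\ $p\lesssim e^{-(\mu/\tau)^a}$ for $\mathcal G_{a,\tau}$). With $p\asymp s/d$ the variance constraint reads $(s/d)\mu^2\lesssim1$, and combined with $\mu\asymp(d/s)^{1/a}$ this becomes $(s/d)^{1-2/a}\lesssim1$, which is exactly the assumption $a\ge2$ — this is the one tight point of the argument and is what pins down the rate $\phi_{\sf pol}^*$ and the boundary case; for $\mathcal G_{a,\tau}$ the analogous quantity is $(s/d)\log^{2/a}(ed/s)$, bounded by $1$ for all $a>0$ since it is maximised at $s=d$, which is why no restriction on $a$ appears there. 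The secondary nuisances — controlling the random cardinality $\|\bt\|_0$ uniformly in $(s,d)$ and passing to the degenerate construction near $s\asymp d$ — are elementary concentration bookkeeping.
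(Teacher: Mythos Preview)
Your argument is correct and is essentially the same construction as the paper's: a Rademacher noise under the alternative, a sparse Rademacher spike $\mu\,\e_iB_i$ with $B_i\sim\mathrm{Bernoulli}(s/(2d))$, and the key identity that the mixture over the (unconditioned) prior coincides exactly with a single product law $\prob_{0,\tilde P,\s_0}$ for some $\tilde P$ in the class; conditioning on $\{\|\bt\|_0\le s\}$ (or, as you do, on $\{s/4\le\|\bt\|_0\le s\}$) then costs only a bounded total-variation error by binomial deviations. The paper normalises so that $\s=1$ under the alternative and $\s_0^2=1+\alpha^2 s/(2d)>1$ under the null, whereas you normalise the other way ($\s_0=1$, $\s_1=c_1<1$); your choice makes the variance constraint $p\mu^2<1$ visible and explains neatly why $a\ge 2$ is needed for $\mathcal P_{a,\tau}$, while in the paper's normalisation that constraint is replaced by the harmless $\s_0^2\le 1+\tau^2/8$. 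Two small remarks: (i) your description of the residual regime should read $1/C<s/d\le 1$ (so that $d/s$ is bounded), not ``$1\le s/d<1/C$''; (ii) for very small $s$ (say $s=1,2$) the event $\{s/4\le\|\bt\|_0\le s\}$ cannot be handled by Chernoff alone---one needs the direct computation $\prob(\mathrm{Bin}(d,s/(2d))=1)\ge (1/2)(1-1/2)^{d-1}\vee (1/2)e^{-1/2}$ or similar, which the paper does via its Lemmas~5 and~6 (case $s<32$). Neither point affects the validity of your approach.
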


The lower bound~\eqref{lowerbound2:norm} implies that the rate of estimation of the $\ell_2$-norm of a sparse vector
deteriorates dramatically if the bounded moment assumption is imposed on the noise instead, for example, of the sub-Gaussian assumption.

Note also that \eqref{lowerbound1:norm}  and \eqref{lowerbound2:norm} immediately imply lower bounds with the same rates $\phi_{\sf exp}^* $ and $\phi_{\sf pol}^*$ for the estimation of the $s$-sparse vector $\bt$ under the $\ell_2$-norm.
	
Given the upper bounds of Theorem  \ref{theorem_adaptiveupperbound}, the lower bounds~\eqref{lowerbound1:norm} and~\eqref{lowerbound2:norm} are tight for the quadratic loss, 
 and are achieved by the following  plug-in estimator independent of $s$ or $\s$:
\begin{equation}\label{definition_norm_mom}
	\hat{N} = \|\hat{\bt}\|_2
\end{equation}
where $\hat{\bt}$ is defined in~\eqref{def_estimateur_mom}. 

In conclusion, when both $P_\xi$ and $\s$ are unknown the rates $\phi_{\sf exp}^* $ and $\phi_{\sf pol}^*$ defined in \eqref{opt_rate} are minimax optimal both for estimation of $\bt$ and of the norm $ \|\bt\|_2$.  

We now compare these results with the findings in~\cite{CollierCommingesTsybakov2017} regarding the (nonadaptive) estimation of  $\|\bt\|_2$ when $\xi_i$ have the standard Gaussian distribution ($P_\xi = {\cal N}(0,1)$) and $\s$ is known.  It is shown in~\cite{CollierCommingesTsybakov2017} that in this case the optimal rate of estimation of $\|\bt\|_2$ has the form 
$$
\phi_{{\cal N}(0,1)}(s,d)= \min\left\{\sqrt{s\log(1+\sqrt{d}/s)},d^{1/4}\right\}.
$$  
Namely, the following proposition holds.  
\begin{proposition}[Gaussian noise, known $\s$ \cite{CollierCommingesTsybakov2017}]\label{prop:lower:gaussian}
	For any $\s>0$ and any integers $s,d$ satisfying $1\le s\le d$, we have
	\begin{equation*}
	c \s^2 
 		\phi_{{\cal N}(0,1)}^2(s,d)\le \inf_{\hat{T}} \sup_{\|\bt\|_0\le s} \esp_{\bt,{\cal N}(0,1),\s} \big(\hat{T}-\|\bt\|_2\big)^2 \le C \s^2 
		\phi_{{\cal N}(0,1)}^2(s,d),
	\end{equation*}
	where $c>0$ and $C>0$ are absolute constants and $\inf_{\hat{T}}$ denotes the infimum over all estimators. 
\end{proposition}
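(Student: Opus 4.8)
First I would reduce to $\s=1$: since $\bY/\s$ has law $\prob_{\bt/\s,\,{\cal N}(0,1),\,1}$ and both $\|\cdot\|_0$ and the rescaled loss $|\hat T-\|\bt\|_2|/\s$ are scale invariant, the factor $\s^2$ factors out of both sides. Write $L=\log(1+\sqrt d/s)$, so that $\phi^2_{{\cal N}(0,1)}(s,d)=sL\wedge\sqrt d$ (note $sL\le\sqrt d$ always, since $\log(1+u)\le u$), and set $Q=\|\bt\|_2^2$. I would then prove the two inequalities separately, splitting each into the \emph{dense} regime $s\ge c_0\sqrt d$ and the \emph{sparse} regime $s<c_0\sqrt d$, for a small absolute constant $c_0$; for the upper bound the estimator is allowed to depend on $s$ and $\s$.

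\textbf{Upper bound.} In the dense regime I would take $\hat T=\bigl(\sum_{i=1}^d Y_i^2-d\bigr)_+^{1/2}$. A one-line moment computation gives $\esp\bigl(\sum_iY_i^2-d-Q\bigr)^2=2d+4Q$, and, $x\mapsto x_+$ being $1$-Lipschitz with $Q\ge0$, this yields $\esp(\hat T^2-Q)^2\le 2d+4Q$. Combining $(\sqrt a-\sqrt b)^2\le|a-b|$ with the relative bound $|\hat T-\sqrt Q|\le|\hat T^2-Q|/\sqrt Q$ valid on $\{\hat T^2\ge Q/2\}$, and controlling the complementary event by the exponential (noncentral $\chi^2$) deviation inequality for $\sum_iY_i^2$, I would obtain $\esp(\hat T-\|\bt\|_2)^2\le C\sqrt d$ for every $\bt$; since $sL\asymp\sqrt d$ throughout the dense regime (as $\log(1+u)/u$ is bounded below for $u\le 1/c_0$), this matches $\phi^2_{{\cal N}(0,1)}$ there. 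In the sparse regime I would use a truncated, bias-corrected quadratic estimator $\hat Q=\sum_{i=1}^d\bigl[(Y_i^2-1)\fcar(|Y_i|\ge t)-\alpha(t)\bigr]$, $\hat T=\hat Q_+^{1/2}$, with threshold $t\asymp\sqrt L$ and correction $\alpha(t)=\esp\bigl[(\eta^2-1)\fcar(|\eta|\ge t)\bigr]$, $\eta\sim{\cal N}(0,1)$, so that a null coordinate contributes exactly zero bias. The analysis splits the index set into the at most $d$ null coordinates and the at most $s$ coordinates with $\t_i\ne0$: after the correction the null coordinates are unbiased and their truncated variances are exponentially small in $t^2$, while a uniform estimate $|\esp[(Y_i^2-1)\fcar(|Y_i|\ge t)-\alpha(t)]-\t_i^2|\lesssim t^2$ bounds the residual bias by $\lesssim st^2\asymp sL$; together these give $\esp(\hat Q-Q)^2\lesssim(sL)^2+s+Q$, and the same square-root transfer as above (splitting on whether $Q\lesssim(sL)^2$, and using exponential concentration of $\hat Q$ when $Q$ is large) yields $\esp(\hat T-\|\bt\|_2)^2\le C\,sL=C\,\phi^2_{{\cal N}(0,1)}(s,d)$.

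\textbf{Lower bound.} Here I would apply the method of fuzzy hypotheses, comparing $\prob_0:=\prob_{\mathbf 0,{\cal N}(0,1),1}$ with the mixture $\mathbf P_\mu=\int\prob_{\bt,{\cal N}(0,1),1}\,d\mu(\bt)$ under which the $\t_i$ are i.i.d., equal to $\pm\rho$ with probability $p/2$ each and to $0$ with probability $1-p$. By independence across coordinates, $\chi^2(\mathbf P_\mu\,\|\,\prob_0)+1=\bigl(1+p^2(\cosh(\rho^2)-1)\bigr)^d\le\exp(dp^2 e^{\rho^2})$. Taking $(p,\rho^2)=(s/d,\ \log(d/s^2))$ in the sparse regime and $(p,\rho^2)=(c/\sqrt d,\ \mathrm{const})$ in the dense regime keeps this $\chi^2$-distance bounded, while under $\mu$ one has, with probability bounded away from $0$, simultaneously $\|\bt\|_0\le Cs$ and $Q\asymp\phi^2_{{\cal N}(0,1)}(s,d)$. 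Conditioning $\mu$ on that event changes only the constants and makes it supported on $\T_s$; since $\|\bt\|_2=0$ under $\prob_0$ whereas $\|\bt\|_2\asymp\phi_{{\cal N}(0,1)}(s,d)$ under the conditioned prior, the two-point/fuzzy-hypotheses reduction gives $\inf_{\hat T}\sup_{\|\bt\|_0\le s}\esp_{\bt,{\cal N}(0,1),1}(\hat T-\|\bt\|_2)^2\ge c\,\phi^2_{{\cal N}(0,1)}(s,d)$.

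\textbf{Main obstacle.} The delicate point is the sparse-regime upper bound: the threshold $t\asymp\sqrt L$ must be calibrated so that the bias removed by the correction, the residual bias $\lesssim st^2$ on the genuine support, and the aggregate variance of the up to $d$ truncated null coordinates all stay at the levels $sL$ and $(sL)^2$ respectively, \emph{uniformly} over $1\le s<c_0\sqrt d$; and the passage $\hat Q\mapsto\hat Q_+^{1/2}$ must be made uniform over all magnitudes of $\|\bt\|_2$, which forces one to supplement the second-moment control of $\hat Q$ with exponential deviation bounds. By contrast the lower bound is routine once the two-regime choice of $(p,\rho)$ has been identified; the only care needed there is the passage from the unconstrained product prior $\mu$ to one supported on $\T_s$.
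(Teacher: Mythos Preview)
The paper does not prove this proposition: it is stated as a known result, attributed to \cite{CollierCommingesTsybakov2017}, and no proof is given anywhere in the text. So there is nothing in the present paper to compare your argument against.

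That said, your sketch is essentially the argument of \cite{CollierCommingesTsybakov2017}: the same dense/sparse split, the same thresholded and bias-corrected quadratic estimator with $t^2\asymp\log(1+\sqrt d/s)$ in the sparse zone (the paper writes $\log(1+d/s^2)$, which is the same up to constants there), and the same symmetric-spike product prior with the $\chi^2$ computation $1+p^2(\cosh(\rho^2)-1)$ per coordinate for the lower bound. One small slip: as written, your prior with $p=s/d$ is only supported on $\T_{Cs}$ after conditioning on $\{\|\bt\|_0\le Cs\}$, not on $\T_s$; the standard fix (used in the present paper's other lower bounds) is to take $p=s/(2d)$ so that $\mu(\|\bt\|_0>s)$ is exponentially small and conditioning on $\T_s$ itself costs only a constant. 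With that adjustment your outline is correct.
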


We have seen that, in contrast to this result, in the case of unknown $P_\xi$ and $\s$ the optimal rates \eqref{opt_rate}
do not exhibit an elbow at $s=\sqrt{d}$ between the "sparse" and "dense" regimes. Another conclusion is that, in the "dense" zone $s>\sqrt{d}$, adaptation to $P_\xi$ and $\s$ is only possible with a significant deterioration of the rate. On the other hand, for the sub-Gaussian class $\mathcal{G}_{2,\tau}$, in the "sparse" zone $s\le \sqrt{d}$ the non-adaptive rate $\sqrt{s\log(1+\sqrt{d}/s)}$ differs only slightly from the adaptive sub-Gaussian rate $\sqrt{s\log(ed/s)}$; in fact, this 
difference in the rate appears only in a vicinity of $s=\sqrt{d}$. 

A natural question is whether such a deterioration of the rate is caused by the ignorance of $\s$ or by the ignorance of the distribution of $\xi_i$ within the sub-Gaussian class $\mathcal{G}_{2,\tau}$. 
The answer is that both are responsible. It turns out that if only one of the two ingredients ($\s$ or the noise distribution) is unknown, then 
a rate faster than the adaptive sub-Gaussian rate $\phi_{\sf exp}^*(s,d) = \sqrt{s\log(ed/s)}$ can be achieved. This is detailed in the next two propositions.

{
Consider first the case of Gaussian noise and unknown $\s$. Set
$$
\phi_{{\cal N}(0,1)}^*(s,d)= \max\left\{\sqrt{s\log(1+\sqrt{d}/s)},\sqrt{\frac{s}{1+
\log_+(s^{2}/d)}}\right\},
$$ 
where $\log_+(x)=\max(0,\log(x))$ for any $x>0$.
We divide the set $\{1, \dots, d\}$ into two disjoint subsets $I_{1}$ and $I_{2}$ 
with $\min\left(|I_{1}|,|I_{2}|\right)\geq \lfloor {d}/{2}\rfloor$. Let $\hat{\s}^{2}$ be the variance estimator defined by 
\eqref{definition_noisevarianceestimator_gauss}, cf. Section \ref{sec:median} below, and let 
$\hat{\s}^{2}_{\sf med,1}, \hat{\s}^{2}_{\sf med,2}$ be the median estimators 
\eqref{definition_median} corresponding to the samples $(Y_{i})_{i \in I_{1}}$ and 
$(Y_{i})_{i \in I_{2}}$, respectively. 
Consider the estimator
\begin{equation}\label{eq:C}
\hat{N}^* = \left\{
  \begin{array}{lcl}
\sqrt{ \Big| \sum_{j=1}^d (Y_j^2~\fcar_{\{ |Y_j|>\rho_{j} \}}) -d\a \hat{\s}^2\Big|}& \text{if}& s\le \sqrt{d},\\
 \sqrt{ \Big|  \sum_{j=1}^d Y_j^2 -d \hat{\s}^2\Big|}\phantom{~\fcar_{\{ |Y_j|>\
 \rho \}}}& \text{if} & s> \sqrt{d},
  \end{array}
  \right.
\end{equation} 
where $\rho_{j}= 2 \hat{\s}_{\sf med,1}  \sqrt{2\log (1+d/s^2)}$ if $j \in I_{2}$, $\rho_{j}= 
2 \hat{\s}_{\sf med,2}  \sqrt{2\log (1+d/s^2)}$ if $j \in I_{1}$ and $\a = \esp\left(\xi_1^2~\fcar_{\{ |\xi_1|>2  \sqrt{2\log (1+d/s^2)} \}}\right)$. Note that $Y_j$ is independent of $\rho_j$ for every $j$. Note also that the estimator $\hat{N}^*$
 depends 
on the preliminary estimator ${\tilde \s}^2$ since $\hat{\s}>0$  
defined in \eqref{definition_noisevarianceestimator_gauss} depends on it.

}

\begin{proposition}[Gaussian noise, unknown $\s$]\label{prop:norm:gauss}
The following two properties hold.
\begin{itemize}
\item[(i)] 
{
Let $s$ and $d$ be integers satisfying $1\le s< \lfloor \g d\rfloor/4$, where 
$\g\in(0,1/2]$ is the tuning parameter in the definition of $\tilde \s^2$. 
There exist absolute constants $C>0$ and $\g\in(0,1/2]$ such that	
\begin{equation*}\label{upperbound:norm:gauss}
\sup_{\s>0}\sup_{\|\bt\|_0\le s} \frac{\esp_{\bt, {\cal N}(0,1),\s}  \left( \hat{N}^*-\|\bt\|_2 \right)^{2}}{\sigma^{2}} \le C\left(\phi_{{\cal N}(0,1)}^{*}(s,d)\right)^2.
	\end{equation*}

\item[(ii)] 
Let $s$ and $d$ be integers satisfying $1\le s\le d$ and
let $\ell(\cdot)$ be any loss function in the class $\mathcal L$.  Then, 
	\begin{equation*}\label{lowerbound:norm:gauss}
	\inf_{\hat{T}}  \sup_{\s>0} \sup_{\|\bt\|_0\le s} 
	\esp_{\bt,{\cal N}(0,1),\s}\,\ell \bigg( c(\phi_{{\cal N}(0,1)}^*(s,d))^{-1} \bigg|\frac{ \hat{T}-\|\bt\|_2}{\s}\bigg| \bigg)\ge c^{\prime},
	\end{equation*}
	where  $\inf_{\hat{T}}$ denotes the infimum over all estimators, and $c>0$, $c^{\prime}>0$ are constants that can depend only on $\ell(\cdot)$.
	}
	\end{itemize}
\end{proposition}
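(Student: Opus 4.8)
The plan is to reduce the control of $\hat{N}^*$ to the control of a single centered random variable and then split according to the size of $\|\bt\|_2$. Writing $W$ for the quantity appearing inside $|\cdot|$ in the definition \eqref{eq:C} of $\hat{N}^*$, so that $(\hat{N}^*)^2=|W|$, and setting $\hat Z:=W-\|\bt\|_2^2$, I would use the elementary inequalities $(\hat{N}^*-\|\bt\|_2)^2\le |(\hat{N}^*)^2-\|\bt\|_2^2|\le |\hat Z|$ and, when $\bt\neq 0$, $(\hat{N}^*-\|\bt\|_2)^2=\big((\hat{N}^*)^2-\|\bt\|_2^2\big)^2/(\hat{N}^*+\|\bt\|_2)^2\le \hat Z^2/\|\bt\|_2^2$ (both use $\big||W|-\|\bt\|_2^2\big|\le \big|W-\|\bt\|_2^2\big|$ since $\|\bt\|_2^2\ge 0$). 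Hence $\esp_{\bt,\mathcal N(0,1),\s}(\hat N^*-\|\bt\|_2)^2\le \esp\min\!\big(|\hat Z|,\hat Z^2/\|\bt\|_2^2\big)$, and it suffices to prove $\esp|\hat Z|\le C\s^2(\phi^*_{\mathcal N(0,1)})^2$ together with $\esp \hat Z^2\le C\s^4(\phi^*_{\mathcal N(0,1)})^4$: one then uses the first bound on the event $\{\|\bt\|_2^2\le \s^2(\phi^*_{\mathcal N(0,1)})^2\}$ and the second divided by $\|\bt\|_2^2$ on its complement.

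\textbf{The two regimes.}
For $s>\sqrt d$ one has $W=\sum_j Y_j^2-d\hat\s^2$, so $\hat Z=2\s\sum_j\t_j\xi_j+\s^2(\|\bxi\|_2^2-d)+d(\s^2-\hat\s^2)$; here $\esp(2\s\sum_j\t_j\xi_j)^2=4\s^2\|\bt\|_2^2$ (a factor $\|\bt\|_2^2$ that is cancelled by the division), $\esp\s^4(\|\bxi\|_2^2-d)^2=2d\s^4$, and $\esp d^2(\s^2-\hat\s^2)^2$ is bounded by the quadratic risk of $\hat\s^2$ from Section~\ref{sec:median}; since in this regime $\sqrt d\le (\phi^*_{\mathcal N(0,1)})^2$ and $d^2$ times the squared variance rate is $\le C(\phi^*_{\mathcal N(0,1)})^4$, the two required bounds follow by routine arithmetic. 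For $s\le\sqrt d$ one has $W=\sum_j Y_j^2\fcar_{\{|Y_j|>\rho_j\}}-d\a\hat\s^2$; I would split $\hat Z$ into a signal part $\sum_{j:\t_j\neq0}(Y_j^2\fcar_{\{|Y_j|>\rho_j\}}-\t_j^2)$ and a noise part $\sum_{j:\t_j=0}Y_j^2\fcar_{\{|Y_j|>\rho_j\}}-d\a\hat\s^2$. The key structural point is that $\rho_j$ depends only on the half of the sample not containing $Y_j$, so conditionally on that half it is a constant; on the high‑probability event that $\hat\s_{\sf med,1},\hat\s_{\sf med,2}$ are within fixed factors of $\s$ (Proposition~\ref{proposition_over}‑type concentration of the median estimator), $\rho_j/\s$ lies in $[c_1T,c_2T]$ with $T=2\sqrt{2\log(1+d/s^2)}$ and $T^2\asymp\log(1+d/s^2)\asymp (\phi^*_{\mathcal N(0,1)})^2/s$. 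Then: the conditional bias of a signal coordinate is $\le C\s^2T^2$ (one loses $\asymp\s^2$ when $|\t_j|$ clears the threshold and $\asymp\t_j^2+\s^2T^2\le C\s^2 T^2$ when it does not, since then $|\t_j|\le C\rho_j$), so the total signal bias is $\le Cs\s^2T^2\le C\s^2(\phi^*_{\mathcal N(0,1)})^2$; the signal fluctuation has conditional variance $\le C(\s^2\|\bt\|_2^2+s\s^4T^4)$; the noise bias (the mismatch between $\esp[Y_j^2\fcar_{\{|Y_j|>\rho_j\}}\mid\text{other half}]$ and $\a\hat\s^2$, coming both from $\rho_j/\s\neq T$ and from $\hat\s^2\neq\s^2$) is controlled by the Gaussian tail estimates $\esp(\xi_1^2\fcar_{\{|\xi_1|>t\}})\asymp t e^{-t^2/2}$ and the risk of $\hat\s^2$; and the noise fluctuation has variance $\le Cd\s^4T^3e^{-cT^2}$, which is $\le C\s^4(\phi^*_{\mathcal N(0,1)})^4$ because $e^{-cT^2}=(1+d/s^2)^{-c'}$ and $s\le\sqrt d$. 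Feeding these into the case split of the previous paragraph completes (i).

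\textbf{Plan for part (ii).}
First note that $\sqrt{s\log(1+\sqrt d/s)}=\phi_{\mathcal N(0,1)}(s,d)$ exactly, since $\sqrt{s\log(1+\sqrt d/s)}\le d^{1/4}$ for all $1\le s\le d$; hence, specialising to $\s=1$ in the supremum and observing that the two‑point/Fano construction underlying Proposition~\ref{prop:lower:gaussian} (from \cite{CollierCommingesTsybakov2017}) in fact produces a constant lower bound on $\prob_{\bt,\mathcal N(0,1),1}\big(|\hat T-\|\bt\|_2|\ge c\,\phi_{\mathcal N(0,1)}(s,d)\big)$, monotonicity of $\ell$ gives the stated bound with the first term of $\phi^*_{\mathcal N(0,1)}$, for every $\ell\in\calL$. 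When $s\le\sqrt d$ the second term is just $\sqrt s$ and is dominated, so (ii) is already proved there. The new content is the dense regime $s>\sqrt d$, where $\phi^*_{\mathcal N(0,1)}(s,d)\asymp\sqrt{s/\log(s^2/d)}\gg d^{1/4}$ and where one must exploit that $\s$ is unknown. I would compare, by a Bayesian (fuzzy hypotheses) argument, the pure‑noise law $\prob_{0,\mathcal N(0,1),1}$ against a mixture of laws $\prob_{\bt,\mathcal N(0,1),\s_1}$ with $\s_1<1$ and $\bt$ random, supported on a random subset of size $\le s$, with active entries of magnitude of the order of the detection threshold $\asymp\sqrt{\log(s^2/d)}$ and total energy $\|\bt\|_2^2\asymp \s_1^2\,s/\log(s^2/d)$; the prior on $\bt$ and the value $\s_1$ are chosen so that the per‑coordinate law $Q$ of $Y_j$ under the mixture matches the moments of $\mathcal N(0,1)$ up to an order $\asymp\log(s^2/d)$, which via Hermite‑polynomial estimates and a classical moment‑matching lemma forces $\chi^2\big(Q^{\otimes d}\,\|\,\mathcal N(0,1)^{\otimes d}\big)\le C$. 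Since the two hypotheses are then statistically indistinguishable while $\|\bt\|_2$ differs by $\asymp\s\,\phi^*_{\mathcal N(0,1)}(s,d)$, the standard reduction gives the bound for every $\ell\in\calL$.

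\textbf{Main obstacle.}
The principal difficulty is the dense‑regime construction in (ii): pinning down the correct prior on $(\bt,\s_1)$ — in particular the exact magnitude of the active coordinates and the exact number of matched moments — and carrying through the $\chi^2$ (or total‑variation) estimate; this is precisely where the interaction between the unknown scale and the sparse signal generates the new rate $\sqrt{s/(1+\log_+(s^2/d))}$, and it runs closely parallel to the lower bound for variance estimation in Section~\ref{sec:variance}. A secondary technical nuisance, in the sparse case of (i), is to control cleanly the discrepancy between the deterministic constant $\a$ (built from the deterministic level $T$) and the data‑dependent thresholds $\rho_j$, which is exactly what the $I_1/I_2$ splitting in \eqref{eq:C} is designed to make tractable.
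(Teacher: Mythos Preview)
Your min--trick $(\hat N^*-\|\bt\|_2)^2\le\min(|\hat Z|,\hat Z^2/\|\bt\|_2^2)$ is a legitimate route, but it is not the paper's and it needs an ingredient the paper does not supply. In the dense case $s>\sqrt d$ your bound on $\esp\hat Z^2/\|\bt\|_2^2$ (with $\|\bt\|_2^2>\s^2(\phi^*)^2$) requires $d^2\,\esp(\hat\s^2-\s^2)^2\le C\s^4(\phi^*)^4$, i.e.\ a \emph{second-moment} bound $\esp(\hat\s^2-\s^2)^2\le C\s^4\,\phi_{\mathcal N(0,1)}^2(s,d)$ for the estimator \eqref{definition_noisevarianceestimator_gauss}. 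Proposition~\ref{prop:variance:gauss}(i) only controls the first moment, and the crude bound $\esp(\hat\s^2-\s^2)^2\le C\s^4$ is far too weak here. The same obstruction appears in the sparse case through the term $d\alpha\hat\s^2$ (and note that $\hat\s^2$ depends on the full sample, so it is not measurable with respect to either half $I_1,I_2$, which complicates your conditional-independence scheme). The paper sidesteps all of this by decomposing at the \emph{square-root} level: in the dense case it writes
\[
|\hat N^*-\|\bt\|_2|\le\Big|\sqrt{|\,\|\bt\|_2^2+2\s(\bt,\bxi)\,|}-\|\bt\|_2\Big|+\s\sqrt{|\,\|\bxi\|_2^2-d\,|}+\sqrt{d\,|\s^2-\hat\s^2|},
\]
and in the sparse case similarly splits $|\hat N^*-\|\bt\|_2|$ into a signal piece (bounded via $|\sqrt{\sum_{j\in S}Y_j^2\fcar}-\|\bt\|_2|\le(\sum_{j\in S}(Y_j\fcar-\t_j)^2)^{1/2}$) and a noise piece $|\s^2\sum_{j\notin S}\xi_j^2\fcar-d\alpha\hat\s^2|^{1/2}$. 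After squaring, the terms involving $\hat\s^2$ enter only as $d\,|\s^2-\hat\s^2|$ and $d\alpha\,|\s^2-\hat\s^2|$, so the first-moment bound of Proposition~\ref{prop:variance:gauss}(i) suffices. If you want to keep your squared-level decomposition, you would first have to prove the quadratic risk bound for $\hat\s^2$, which amounts to reworking the proof of Proposition~\ref{prop:variance:gauss}(i).

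\textbf{Part (ii).} The reduction to Proposition~\ref{prop:lower:gaussian} for the first term of the max, and the observation that this already covers $s\le\sqrt d$, is correct. For the dense regime your sketch (``pure noise at $\s=1$ versus a sparse mixture at $\s_1<1$, match moments to order $\asymp\log(s^2/d)$'') is in the right spirit but does not match the paper's construction and understates its difficulty. The paper compares \emph{two} mixtures, each with a sparse random $\bt$: one at noise level $1$ with active-coordinate law $\phi_\varphi*g_1$, the other at noise level $\sqrt{1+\varphi}$ with active-coordinate law $g_2$, where $\varphi=c_0\e/\tau^2$, $\e=s/(2d)$, $\tau^2=\a\log(es^2/d)$. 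The densities $g_1,g_2$ are built by an explicit Fourier construction (Lemma~8 in the Supplement) so that the resulting one-coordinate densities $f_1,f_2$ agree \emph{exactly} on $|t|\le\tau$ at the level of Fourier transforms; the $\chi^2$ bound (Lemma~9) then proceeds by a Hermite-polynomial expansion on $|t|>\tau$. This is more delicate than a finite-order moment match; your proposed prior (pure noise versus one mixture with discrete active amplitudes) would make the $\chi^2$ control substantially harder, because after centering the variance the remaining discrepancy is not exponentially small in the Hermite order without the Fourier cancellation that $g_1,g_2$ are engineered to produce.
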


The proof of item (ii) of Proposition~\ref{prop:norm:gauss} (the lower bound) is given 
in the Supplementary 
material.

Proposition~\ref{prop:norm:gauss} establishes the minimax optimality of the rate $\phi_{{\cal N}(0,1)}^{*}(s,d)$.
It also shows that if $\s$ is unknown, the knowledge of the Gaussian character of the noise leads to an 
improvement of the rate compared to the adaptive sub-Gaussian rate $\sqrt{s\log(ed/s)}$. However, the 
improvement is only in a logarithmic factor.

{
Consider now the case of unknown noise distribution in $\mathcal{G}_{a,\tau}$ and known $\s$. We show in the next 
proposition  that in this case the minimax rate is of the form  
$$
\phi_{\sf exp}^\circ(s,d)= \min\{\sqrt{s} \log^{\frac{1}{a}}(ed/s),d^{1/4}\}
$$ 
and it is achieved by the estimator
$$
\hat{N}^\circ_{\sf exp} = \left\{
  \begin{array}{lcl}
\phantom{\sum_{j=1}^d Y }\|\hat{\bt}\|_2& \text{if}& s\le \frac{\sqrt{d}}{\log^{\frac{2}{a}}(ed)} ,\\
 \Big|  \sum_{j=1}^d Y_j^2 -d \s^2\Big|^{1/2}& \text{if} & s> \frac{\sqrt{d}}{\log^{\frac{2}{a}}(ed)} ,
  \end{array}
  \right.
$$ 
where $\hat{\bt}$ is defined in~\eqref{def_estimateur_mom}. Note that $\phi_{\sf exp}^\circ(s,d)$ 
can be written equivalently (up to absolute constants) as  $\min\{\sqrt{s}\log^{\frac{1}{a}}(ed),d^{1/4}\}$.
\begin{proposition}[Unknown noise in $\mathcal{G}_{a,\tau}$, known $\s$]\label{prop:norm:known_sigma}  
{
Let $a,\tau>0$.  
The following two properties hold.
}
\begin{itemize}
{
\item[(i)] Let $s$ and $d$ be integers satisfying $1\le s< \lfloor \g d\rfloor/4$, where 
$\g\in(0,1/2]$ is the tuning parameter in the definition of $\tilde \s^2$. 
 There exist constants $c, C>0$, and $\g\in(0,1/2]$ depending only on $(a,\tau)$ such that 
if $\hat{\bt}$ is the estimator defined in~\eqref{def_estimateur_mom} with $\l_j= c\log^{\frac{1}{a}}(ed/j)$
, $j=1,\dots,d$,  then	
\begin{equation*}\label{upperbound:norm:subgauss}
\sup_{P_\xi \in \mathcal{G}_{a,\tau}}\sup_{\|\bt\|_0\le s} \esp_{\bt, P_\xi,\s}  \left(  \hat{N}_{\sf exp}^\circ-\|\bt\|_2 \right)^{2}\le C\sigma^{2} \left(\phi_{\sf exp}^{\circ}(s,d)\right)^2.
	\end{equation*} 
}
	\item[(ii)] Let $s$ and $d$ be integers satisfying $1\le s \le d$
and let $\ell(\cdot)$ be any loss function in the class $\mathcal L$.  Then, there exist constants $c>0$, $c^{\prime}>0$ 
depending only on $\ell(\cdot)$, $a$ and $\tau$ such that 
	\begin{equation*}\label{lowerbound:norm:subgauss}
	\inf_{\hat{T}}  \sup_{P_\xi \in \mathcal{G}_{a,\tau}} \sup_{\|\bt\|_0\le s} 
	\esp_{\bt,P_\xi,\s}\,\ell \bigg( c(\phi_{\sf exp}^\circ(s,d))^{-1} \bigg|
\frac{ \hat{T}-\|\bt\|_2}{\s}\bigg| \bigg)\ge c',
	\end{equation*}
	where  $\inf_{\hat{T}}$ denotes the infimum over all estimators.
	\end{itemize}
\end{proposition}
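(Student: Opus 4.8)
The plan is to treat the upper bound (i) and the lower bound (ii) separately, noting that the rate $\phi_{\sf exp}^\circ(s,d) = \min\{\sqrt{s}\log^{1/a}(ed/s), d^{1/4}\}$ has an elbow at $s \asymp \sqrt{d}/\log^{2/a}(ed)$, which is precisely where the definition of $\hat N^\circ_{\sf exp}$ switches branches. For the upper bound in the \emph{sparse regime} $s \le \sqrt{d}/\log^{2/a}(ed)$, I would simply invoke Theorem \ref{theorem_adaptiveupperbound}(1): with the given choice $\l_j = c\log^{1/a}(ed/j)$ the estimator $\hat\bt$ satisfies $\esp_{\bt,P_\xi,\s}\|\hat\bt - \bt\|_2^2 \le C\s^2 s\log^{2/a}(ed/s)$, and since $\big|\,\|\hat\bt\|_2 - \|\bt\|_2\,\big| \le \|\hat\bt - \bt\|_2$, the plug-in norm $\hat N^\circ_{\sf exp} = \|\hat\bt\|_2$ inherits the rate $\sqrt{s}\log^{1/a}(ed/s)$, which in this regime equals $\phi_{\sf exp}^\circ(s,d)$ up to constants (here $\sqrt{s}\log^{1/a}(ed/s) \le d^{1/4}$).

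For the upper bound in the \emph{dense regime} $s > \sqrt{d}/\log^{2/a}(ed)$, where $\phi_{\sf exp}^\circ(s,d) \asymp d^{1/4}$, the estimator is $\hat N^\circ_{\sf exp} = \big|\sum_j Y_j^2 - d\s^2\big|^{1/2}$. Since $\s$ is \emph{known} here, write $\sum_j Y_j^2 - d\s^2 = \|\bt\|_2^2 + 2\s\sum_j \t_j\xi_j + \s^2\sum_j(\xi_j^2 - 1)$. The key estimate is that, letting $V := \sum_j Y_j^2 - d\s^2$, one has $\esp(V - \|\bt\|_2^2)^2 = 4\s^2\sum_j\t_j^2 + \s^4\sum_j \var(\xi_j^2) \le C\s^2\|\bt\|_2^2 + C\s^4 d$, using that $P_\xi \in \mathcal{G}_{a,\tau}$ implies a finite fourth moment of $\xi_1$ (bounded in terms of $a,\tau$). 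Then $\big|\hat N^\circ_{\sf exp} - \|\bt\|_2\big| = \big|\sqrt{|V|} - \sqrt{\|\bt\|_2^2}\,\big|$, and via the elementary inequality $\big|\sqrt{|x|} - \sqrt{|y|}\,\big|^2 \le |x - y|$ we get $\esp\big(\hat N^\circ_{\sf exp} - \|\bt\|_2\big)^2 \le \esp|V - \|\bt\|_2^2| \le \sqrt{\esp(V - \|\bt\|_2^2)^2} \le C\s\|\bt\|_2 + C\s^2\sqrt{d}$. Absorbing the term $C\s\|\bt\|_2$ into $\tfrac12\|\bt\|_2^2 + C'\s^2$ (so it can be moved to the left-hand side or dominated by the $\s^2\sqrt d$ term once $\|\bt\|_2 \lesssim \s d^{1/4}$; for larger $\|\bt\|_2$ one restarts from $\esp(\hat N^\circ - \|\bt\|_2)^2 \le C\s\|\bt\|_2 \le C\s\|\bt\|_2$ and notes $\sqrt s \log^{1/a} \ge d^{1/4}$ does not occur here — the point is a two-case split on $\|\bt\|_2$ versus $\s d^{1/4}$) yields the bound $C\s^2\sqrt{d} \asymp C\s^2(\phi_{\sf exp}^\circ)^2$. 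I expect the bookkeeping of this last absorption step to be the only delicate point in (i).

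For the lower bound (ii), the plan is the standard two-hypothesis / Fano-type construction tailored to the two regimes, exploiting that the unknown distribution ranges over $\mathcal{G}_{a,\tau}$. In the dense regime one reduces to the parametric problem of distinguishing $\bt = 0$ from a vector with $\|\bt\|_2 \asymp \s d^{1/4}$ spread over $\asymp s$ coordinates, where the $\chi^2$-distance between the corresponding product measures (even for Gaussian $\xi_i$) stays bounded — this gives the $d^{1/4}$ part and follows from Proposition \ref{prop:lower:gaussian} specialized appropriately, since $\mathcal{N}(0,1)\in\mathcal{G}_{a,\tau}$ up to rescaling $\tau$. In the sparse regime, where $\phi_{\sf exp}^\circ \asymp \sqrt s\log^{1/a}(ed/s)$, the extra logarithmic factor (compared to the Gaussian rate $\sqrt{s\log(1+\sqrt d/s)}$) must come from the freedom in choosing $P_\xi$: one places a small amount of mass of the noise distribution at a large value $\asymp \tau\log^{1/a}(ed/s)$ — consistent with the tail constraint $\prob(|\xi_1|>t)\le 2e^{-(t/\tau)^a}$ being \emph{tight} at that scale — so that a single nonzero signal coordinate of magnitude $\asymp \s\log^{1/a}(ed/s)$ is statistically indistinguishable from noise; a union/Fano argument over the $\binom{d}{s}$ possible support patterns then converts this into the $\sqrt s\log^{1/a}(ed/s)$ rate. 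The hard part will be choosing the perturbed noise law so that it simultaneously (a) lies in $\mathcal{G}_{a,\tau}$, (b) keeps $\esp\xi_1 = 0$ and $\esp\xi_1^2 = 1$ exactly, and (c) is close enough in total variation / $\chi^2$ to a reference law that a signal of the claimed size is hidden; I would look to adapt the moment-matching construction already used to prove Theorem \ref{theorem_lowerbound_norm_subgaussian}, since that theorem establishes exactly the $\phi_{\sf exp}^*$ lower bound in the harder (unknown-$\s$) setting, and the known-$\s$ case here only requires truncating that construction at $s \asymp \sqrt d/\log^{2/a}(ed)$ and patching in the Gaussian dense-regime bound beyond it. The detailed lower-bound argument is deferred to the Supplementary material, consistent with the treatment of Proposition \ref{prop:norm:gauss}(ii).
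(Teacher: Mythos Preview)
Your sparse-regime upper bound matches the paper exactly. In the dense regime, however, your route via $|\sqrt{|x|}-\sqrt{y}|^2\le|x-y|$ leads to the bound $\esp(\hat N^\circ_{\sf exp}-\|\bt\|_2)^2\le C\s\|\bt\|_2+C\s^2\sqrt d$, and the term $C\s\|\bt\|_2$ is \emph{not} uniformly $O(\s^2\sqrt d)$ (take $\|\bt\|_2\gg\s\sqrt d$). Your proposed absorption/case-split can be made to work if, for $\|\bt\|_2\ge\s d^{1/4}$, you switch to $|\sqrt{|x|}-\sqrt{y}|\le|x-y|/\sqrt y$ and bound $\esp(V-\|\bt\|_2^2)^2/\|\bt\|_2^2\le C\s^2+C\s^4d/\|\bt\|_2^2$; but as written the argument is incomplete. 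The paper avoids this entirely by decomposing \emph{inside} the square root first (as in \eqref{g2}--\eqref{g3}): writing
\[
\big|\hat N^\circ_{\sf exp}-\|\bt\|_2\big|\le\frac{2\s|(\bt,\bxi)|}{\|\bt\|_2}\fcar_{\bt\ne0}+\s\sqrt{\big|\|\bxi\|_2^2-d\big|}
\]
gives directly $\esp(\hat N^\circ_{\sf exp}-\|\bt\|_2)^2\le 8\s^2+C\s^2\sqrt d$ with no case analysis.

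For the lower bound your plan has two concrete problems. First, the claim ``$\calN(0,1)\in\mathcal{G}_{a,\tau}$ up to rescaling $\tau$'' is false for $a>2$: the Gaussian tail $e^{-t^2/2}$ decays \emph{slower} than $e^{-(t/\tau)^a}$, so you cannot use Proposition~\ref{prop:lower:gaussian} for the dense regime. The paper sidesteps this by a monotonicity observation you missed: since $\phi^\circ_{\sf exp}(s,d)$ is nondecreasing in $s$ and $\T_{s'}\subset\T_s$ for $s'\le s$, it suffices to prove the lower bound only for $s$ in the sparse zone $(\phi^\circ_{\sf exp})^2\le c_0\sqrt d/\log^{2/a}(ed)$. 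Second, the adaptation of Theorem~\ref{theorem_lowerbound_norm_subgaussian} is less routine than you suggest. In that proof the alternative has $\s=\s_0>1$, which is forbidden here since $\s$ is fixed. The paper's fix is to use the compactly supported density $F_0$ of Lemma~\ref{lemma_density} (not Rademacher) as baseline, and to build the alternative noise law as $\tilde P=\text{law}(\s_1\xi_1+\a\d_1\e_1)$ with $\s_1^2=(1+\a^2s/(2d))^{-1}$ and a rescaled Bernoulli parameter, chosen precisely so that $\esp\tilde\xi_1^2=1$. One then needs the Hellinger bound between the scale families $F_0$ and $F_0(\cdot/\s_1)/\s_1$ (finite Fisher information of $F_0$) to control $V(\mathbb P_\mu,\prob_{0,\tilde P,1})$, using that $|1-\s_1|\le\a^2s/d\le Cc_0/\sqrt d$ in the sparse zone. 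Your outline does not identify this variance-normalization step, which is the crux of why known $\s$ is harder to lower-bound than unknown $\s$.
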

Proposition~\ref{prop:norm:known_sigma} establishes the minimax optimality of the rate $\phi_{\sf exp}^\circ(s,d)$.
It also shows that if the noise distribution is unknown and belongs to $\mathcal{G}_{a,\tau}$, 
the knowledge of $\s$ leads to an 
improvement of the rate compared to the case when $\s$ is unknown.
In contrast to the case of Proposition~\ref{prop:norm:gauss} (Gaussian noise), the 
improvement here is substantial; it results not only in a logarithmic but in a polynomial factor 
in the dense zone  $s> \frac{\sqrt{d}}{\log^{\frac{2}{a}}(ed)}$.

We end this section by considering the case of unknown polynomial noise and known $\s$. 
The next proposition shows that in this case the minimax rate, for a given $a>4$, is of the form  
$$
\phi_{\sf pol}^\circ(s,d)= \min\{\sqrt{s} (d/s)^{\frac{1}{a}},d^{1/4}\}
$$ 
and it is achieved by the estimator
$$
\hat{N}^\circ_{\sf pol} = \left\{
  \begin{array}{lcl}
\phantom{\sum_{j=1}^d Y }\|\hat{\bt}\|_2& \text{if}& s\le d^{\frac{1}{2}-\frac{1}{a-2}} ,\\
 \Big|  \sum_{j=1}^d Y_j^2 -d \s^2\Big|^{1/2}& \text{if} & s> d^{\frac{1}{2}-\frac{1}{a-2}} ,
  \end{array}
  \right.
$$ 
where $\hat{\bt}$ is defined in~\eqref{def_estimateur_mom}.
\begin{proposition}[Unknown noise in $\mathcal{P}_{a,\tau}$, known $\s$]\label{prop:norm:poly:known_sigma}  
{
Let $\tau>0, a>4$.  
The following two properties hold.
}
\begin{itemize}
{
\item[(i)] Let $s$ and $d$ be integers satisfying $1\le s< \lfloor \g d\rfloor/4$, where 
$\g\in(0,1/2]$ is the tuning parameter in the definition of $\tilde \s^2$. 
 There exist constants $c, C>0$, and $\g\in(0,1/2]$ depending only on $(a,\tau)$ such that 
if $\hat{\bt}$ is the estimator defined in~\eqref{def_estimateur_mom} with 
$\l_j= c(d/j)^{\frac{1}{a}}$, $j=1,\dots,d$,  then
\begin{equation*}\label{upperbound:norm:poly}
\sup_{P_\xi \in \mathcal{P}_{a,\tau}}\sup_{\|\bt\|_0\le s} \esp_{\bt, P_\xi,\s}  \left(  \hat{N}_{\sf pol}^\circ-\|\bt\|_2 \right)^{2}\le C\sigma^{2} \left(\phi_{\sf pol}^{\circ}(s,d)\right)^2.
	\end{equation*} 
}
	\item[(ii)] 
Let $s$ and $d$ be integers satisfying $1\le s \le d$ and let $\ell(\cdot)$ be 
any loss function in the class $\mathcal L$.  Then, there exist constants $c>0$, $c^{\prime}>0$ 
depending only on $\ell(\cdot)$, $a$ and $\tau$ such that
 \begin{equation*}\label{lowerbound:norm:poly}
	\inf_{\hat{T}}  \sup_{P_\xi \in \mathcal{P}_{a,\tau}} \sup_{\|\bt\|_0\le s} 
	\esp_{\bt,P_\xi,\s}\,\ell \bigg( c(\phi_{\sf pol}^\circ(s,d))^{-1} 
\bigg|\frac{ \hat{T}-\|\bt\|_2}{\s}\bigg| \bigg)\ge c^{\prime},
	\end{equation*}
	where  $\inf_{\hat{T}}$ denotes the infimum over all estimators. 	\end{itemize}
\end{proposition}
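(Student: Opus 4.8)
The plan is to treat the upper bound (i) and the lower bound (ii) separately, and within each to split at the threshold $s=d^{1/2-1/(a-2)}$ governing the two branches of $\hat N^\circ_{\sf pol}$ and of $\phi_{\sf pol}^\circ$. The first thing to record is that $s\le d^{1/2-1/(a-2)}$ is equivalent to $\sqrt s\,(d/s)^{1/a}\le d^{1/4}$, so that in the \emph{sparse} regime $\phi_{\sf pol}^\circ=\phi_{\sf pol}^*$ and in the \emph{dense} regime $\phi_{\sf pol}^\circ=d^{1/4}$. For item (i) in the sparse regime, $\hat N^\circ_{\sf pol}=\|\hat\bt\|_2$, and the bound is immediate from $(\|\hat\bt\|_2-\|\bt\|_2)^2\le\|\hat\bt-\bt\|_2^2$ and part~2 of Theorem~\ref{theorem_adaptiveupperbound} (applicable since $a>4>2$, $1\le s<\lfloor\g d\rfloor/4$, and with the choice $\l_j=c(d/j)^{1/a}$ prescribed in the statement). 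For item (i) in the dense regime I would write $\sum_j Y_j^2-d\s^2=\|\bt\|_2^2+W$ with $W:=2\s\sum_j\t_j\xi_j+\s^2\sum_j(\xi_j^2-1)$, use the elementary inequality $\big|\,|x|^{1/2}-y^{1/2}\big|\le\min\{|x-y|^{1/2},\,|x-y|/y^{1/2}\}$ (valid for $x\in\RR$, $y\ge0$) to get $(\hat N^\circ_{\sf pol}-\|\bt\|_2)^2\le\min\{|W|,\,W^2/\|\bt\|_2^2\}$, and then control $W$. One has $\esp W=0$ and, using $a>4$ (hence $\esp\xi_1^4\le C(a,\tau)$ and $|\esp\xi_1^3|\le C(a,\tau)$), Cauchy--Schwarz $|\sum_j\t_j|\le\sqrt s\,\|\bt\|_2$ and $s\le d$, $\var W\le C(\s^2\|\bt\|_2^2+\s^4 d)$. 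Splitting on whether $\|\bt\|_2\le\s d^{1/4}$ and bounding $\esp|W|\le(\var W)^{1/2}$ in the first case, $\esp(W^2/\|\bt\|_2^2)=\var W/\|\bt\|_2^2$ in the second, both give $\le C\s^2\sqrt d=C\s^2(\phi_{\sf pol}^\circ(s,d))^2$.

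For item (ii) I would first rescale to $\s=1$ (legitimate since $\s$ is known) and again split at the threshold. In the sparse regime, where $\phi_{\sf pol}^\circ=\phi_{\sf pol}^*$, the claim is exactly the sparse-regime part of inequality~\eqref{lowerbound2:norm} of Theorem~\ref{theorem_lowerbound_norm_subgaussian}: its proof opposes $H_0$ ($\bt=0$, nominal noise density $P_0\in\mathcal P_{a,\tau}$ with a genuine degree-$a$ polynomial tail) to $H_1$ (a prior on $s$-sparse vectors whose nonzero entries have magnitude $\mu\asymp(d/s)^{1/a}$, so that $\|\bt\|_2\asymp\sqrt s(d/s)^{1/a}=\phi_{\sf pol}^\circ$, together with a companion density $P_1\in\mathcal P_{a,\tau}$ engineered so that the $\mu$-shift on the signal coordinates is hidden in the tail of $P_0$ up to a perturbation of $P_0$-mass $\lesssim s/d$ per coordinate); since $s<\sqrt d$ in this regime, the $\chi^2$-divergence between the two laws of $\bY$ (via the Ingster--Suslina sparse-mixture identity) stays bounded, and the assertion follows by the usual reduction to two hypotheses and the bound $\ell(x)\ge\ell(c)\fcar_{x\ge c}$ for $\ell\in\mathcal L$. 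The only point to verify is that this construction can be carried out with a single value of $\s$, which it can, the concealment being effected through the noise density rather than through a change of scale; hence \eqref{lowerbound2:norm} applies here at fixed $\s$.

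In the dense regime, $\phi_{\sf pol}^\circ=d^{1/4}$, and knowledge of $\s$ buys nothing: the bound reduces to the familiar $d^{1/4}$ phenomenon for $\|\bt\|_2$. I would take $H_0$ with $\bt=0$ and a fixed smooth density $P_0\in\mathcal P_{a,\tau}$ of unit variance (light, Gaussian-type core, degree-$a$ tail), and $H_1$ the prior drawing a uniformly random support $S$ of size $s$ and setting $\t_i\sim\calN(0,\mu^2)$, $i\in S$, with $\mu=c\,d^{1/4}/\sqrt s$, keeping the noise $P_0$ under both hypotheses. Then $\mu<1$ here (since $s>\sqrt d$), $\|\bt\|_2^2=\sum_{i\in S}\t_i^2$ concentrates around $s\mu^2=c^2\sqrt d$ (relative fluctuation $O(1/\sqrt s)$, and $s\to\infty$ in this regime), and a second-order expansion of $\chi^2(P_0*\calN(0,\mu^2)\,\|\,P_0)\asymp\mu^4$ fed into the sparse-mixture identity gives $\chi^2\lesssim d(s/d)^2\mu^4=c^4$, bounded for $c$ small; the two-hypothesis argument then concludes as before.

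The main obstacle is the sparse-regime lower bound of (ii), i.e.\ the construction behind the sparse-regime part of~\eqref{lowerbound2:norm}: producing a nominal density $P_0$ and an alternative $P_1$, both in $\mathcal P_{a,\tau}$ with zero mean and unit variance, such that the translate $P_1*\delta_\mu$ with $\mu\asymp(d/s)^{1/a}$ (which can be much larger than $1$) differs from $P_0$ only on a set of $P_0$-mass $\lesssim s/d$, so that the per-coordinate $\chi^2$-contribution does not blow up and the sparse mixture stays indistinguishable from $P_0^{\otimes d}$ when $s\lesssim\sqrt d$; this is exactly where the heavy tails, rather than the scale, do the work, and the bookkeeping of mean and variance corrections is delicate. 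Everything else---item (i) and the dense-regime lower bound---is moment estimation and a routine application of the standard $\chi^2$-bound for sparse mixtures.
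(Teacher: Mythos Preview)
For part (i) your argument is correct and essentially the paper's: the sparse branch is the plug-in bound via Theorem~\ref{theorem_adaptiveupperbound}, and the dense branch is a fourth-moment computation using $\esp\xi_1^4<\infty$. Your decomposition via $\big|\,|x|^{1/2}-y^{1/2}\big|\le\min\{|x-y|^{1/2},|x-y|/y^{1/2}\}$ is a minor variant of the paper's (which splits off the cross term $2\s(\bt,\bxi)$ first via $|\sqrt{|1+x|}-1|\le|x|$), and both give the same bound.

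For part (ii) your plan diverges from the paper and has two soft spots. First, the paper does not prove a separate dense-regime lower bound at all: since $s\mapsto\phi_{\sf pol}^\circ(s,d)$ is non-decreasing and the minimax risk over $\Theta_s$ is non-decreasing in $s$, it suffices to prove the bound for $s$ with $(\phi_{\sf pol}^*(s,d))^2\le c_0\sqrt d$, i.e.\ $s\le c_0'\,d^{1/2-1/(a-2)}$; the $d^{1/4}$ bound for larger $s$ then comes for free from the boundary case. Your dense-regime construction is not needed, and as written it has a slip: the dense regime is $s>d^{1/2-1/(a-2)}$, not $s>\sqrt d$, so for $s$ in the gap $(d^{1/2-1/(a-2)},\sqrt d)$ your $\mu=c\,d^{1/4}/\sqrt s$ can be large and the second-order expansion $\chi^2\asymp\mu^4$ breaks down.

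Second, for the sparse-regime adaptation of Theorem~\ref{theorem_lowerbound_norm_subgaussian} to fixed $\s$, the paper does not build a pair $P_0,P_1$ with $P_1*\delta_\mu\approx P_0$ and control the mixture $\chi^2$ as you sketch. Instead it keeps the mixture prior $\bar\mu$ from Theorem~\ref{theorem_lowerbound_norm_subgaussian} but replaces the Rademacher noise $U$ by the smooth compactly supported density $F_0$ of Lemma~\ref{lemma_density}, and under the null it takes $\tilde P$ to be the law of $\s_1\xi_1+\a\d_1\e_1$ with $\s_1^2=(1+\a^2 s/(2d))^{-1}$ and a slightly thinned Bernoulli, so that $\esp\tilde\xi_1^2=1$ exactly. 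The only new cost compared with Theorem~\ref{theorem_lowerbound_norm_subgaussian} is $V(\mathbb P_\mu,\prob_{0,\tilde P,1})=V(F_0^{\otimes d}*\mathbf Q,\ (F_0/\s_1)^{\otimes d}*\mathbf Q)\le V(F_0^{\otimes d},(F_0/\s_1)^{\otimes d})$, which is controlled by the Hellinger/Fisher-information bound since $|1-\s_1|\lesssim\a^2 s/d=(\phi_{\sf pol}^*)^2/d\le c_0/\sqrt d$ in the restricted range of $s$. This is exactly the device ``concealment through the noise density rather than the scale'' that you allude to, but the concrete ingredient you were missing is the use of a smooth $F_0$ with finite Fisher information so that a scale perturbation of order $c_0/\sqrt d$ costs only $O(c_0)$ in total variation over $d$ coordinates.
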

}
Note that here, similarly to  
Proposition~\ref{prop:norm:known_sigma}, the 
improvement over the case of unknown $\s$ is  in a polynomial factor 
in the dense zone  $s> d^{\frac{1}{2}-\frac{1}{a-2}}$.


\section{Estimating the variance of the noise}\label{sec:variance}

\subsection{Estimating $\s^2$ when the distribution $P_\xi$ is known}\label{sec:median}

In the sparse setting when $\|\bt\|_0$ is small, estimation of the noise level  can be viewed as a problem of robust estimation of scale. Indeed, our aim is to recover  the second moment of~$\s\xi_1$ but the sample second moment cannot be used as an estimator because of the presence of a small number of outliers~$\t_i\ne 0$. Thus, the models in robustness and sparsity problems are quite similar but the questions of interest are different. When robust estimation of $\s^2$ is considered, the object of interest is the pure noise component of the sparsity model while the non-zero components $\t_i$ that are of major interest in the sparsity model play a role of nuisance. 

In the context of robustness, it is known that the estimator based on sample median can be successfully applied. Recall that, when $\bt=0$, the median $M$-estimator of scale (\cf \cite{Huber1981}) is defined as
\begin{equation}\label{definition_median}
	\hat{\s}_{\sf med}^2 = \frac{\hat{M}}{\b}
\end{equation}
where $\hat{M}$ is the sample median of $(Y_1^2,\dots,Y_d^2)$, that is
\begin{equation*} 
	\hat{M} \in  \arg\min_{x>0} \big|F_d(x)-1/2\big|, 
\end{equation*}
and $\beta$ is the median of the distribution of $\xi_1^2$. Here, $F_d$ denotes the empirical c.d.f. of $(Y_1^2,\dots,Y_d^2)$. { Denoting by $F$ the c.d.f. of $\xi^{2}_1$, we have 
\begin{equation}\label{equation_quantile}
	\b=F^{-1}(1/2).
\end{equation}
}
%

The following proposition specifies the rate of convergence of the estimator $\hat{\s}_{\sf med}^2$.

\begin{proposition}\label{prop:gao}
{
Let $\xi_1^{2}$ have a c.d.f. $F$ with positive density, and let $\b$ be given by \eqref{equation_quantile}.	There exist constants $\g\in(0,1/8)$, $c>0$, $c_*>0$ and $C>0$ depending only on $F$ such that for any integers  $s$ and $d$ satisfying $1\le s<  \g d$ and any $t>0$ we have 
	\begin{equation*}
		\sup_{\s>0}\sup_{\|\bt\|_0\le s} \prob_{\bt, F,\s} \left( \Big|\frac{\hat{\s}_{\sf med}^2}{\s^2}-1\Big| \ge c_*\left(\sqrt{\frac{t}{d}}+\frac{s}{d}\right)\right) \le 2(e^{-t} + e^{-c d}).
	\end{equation*}
If $\esp|\xi_{1}|^{2+\epsilon}<\infty$ for some $\epsilon>0$, then,
\begin{equation*}
		\sup_{\s>0}\sup_{\|\bt\|_0\le s} \frac{\esp_{\bt, F,\s} \left| \hat{\s}_{\sf med}^2 - \s^{2} \right|}{\s^{2}} \le C\max\left(\frac1{\sqrt{d}}, \frac{s}{d}\right).
	\end{equation*}
}
\end{proposition}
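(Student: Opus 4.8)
The plan is to reduce to $\s=1$ by scale invariance and then control the sample median of $(Y_i^2)$ through comparison with the ``clean'' sample $(\xi_i^2)$. Since under $\prob_{\bt,F,\s}$ the vector $\bY/\s$ has the same law as $\bY$ under $\prob_{\bt/\s,F,1}$ while $\hat{\s}_{\sf med}^{2}(\bY)/\s^2=\hat{\s}_{\sf med}^{2}(\bY/\s)$, it suffices to treat $\s=1$. Write $F_d$ for the empirical c.d.f.\ of $(Y_1^2,\dots,Y_d^2)$ and $\tilde F_d$ for that of $(\xi_1^2,\dots,\xi_d^2)$. As $\t_i\ne 0$ for at most $s$ indices, $\|F_d-\tilde F_d\|_\infty\le s/d$, and the Dvoretzky--Kiefer--Wolfowitz inequality gives $\prob(\|\tilde F_d-F\|_\infty>u)\le 2e^{-2du^2}$ for every $u>0$. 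Hence, off an event of probability at most $2e^{-2du^2}$, one has $\|F_d-F\|_\infty\le u+s/d$.

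The key step is to convert this Kolmogorov-distance bound into a bound on $\hat M$. From $\hat M\in\arg\min_{x>0}|F_d(x)-1/2|$ and the fact that $F_d$ takes values in $\{0,1/d,\dots,1\}$ (the $Y_i^2$ being a.s.\ distinct since $F$ has a density), $|F_d(\hat M)-1/2|\le 1/(2d)$, so $|F(\hat M)-1/2|\le u+s/d+1/(2d)$. Using the positive density of $F$, fix $\delta_0,f_0>0$ with $F'\ge f_0$ on $[\beta-\delta_0,\beta+\delta_0]$; then $F$ increases by at least $f_0\delta_0$ between $\beta$ and $\beta\pm\delta_0$, so once $u+s/d+1/(2d)<f_0\delta_0$ the point $\hat M$ must lie in $(\beta-\delta_0,\beta+\delta_0)$, whence $|\hat M-\beta|\le (u+s/d+1/(2d))/f_0$ and, with $u=\sqrt{t/(2d)}$ and $1/(2d)\le s/d$, $|\hat{\s}_{\sf med}^{2}-1|=|\hat M-\beta|/\beta\le c_*(\sqrt{t/d}+s/d)$. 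The smallness requirement $u+s/d+1/(2d)<f_0\delta_0$ holds as soon as $s<\g d$ with $\g=\g(F)$ small and $t\le c_0 d$ with $c_0=c_0(F)>0$; this proves the deviation bound with $2e^{-t}$ on the right for $t\le c_0 d$. For $t>c_0 d$ the event $\{|\hat{\s}_{\sf med}^{2}/\s^2-1|\ge c_*(\sqrt{t/d}+s/d)\}$ is contained in the corresponding event for $t=c_0 d$, so its probability is at most $2e^{-c_0 d}$; taking $c=c_0$ then covers both ranges and yields the first display.

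For the moment bound I would integrate the tail, $\esp|\hat{\s}_{\sf med}^{2}-1|=\int_0^\infty\prob(|\hat{\s}_{\sf med}^{2}-1|>r)\,dr$, and split. On $[0,c_*s/d]$ bound the integrand by $1$, contributing $O(s/d)$. On $[c_*s/d,R]$, with $R=R(F)$ a suitable constant, invert $r=c_*(\sqrt{t/d}+s/d)$ and use the deviation bound to get $\prob(|\hat{\s}_{\sf med}^{2}-1|>r)\le 2e^{-d(r/c_*-s/d)^2}+2e^{-cd}$ for all such $r$, whose integral over this bounded range is $O(1/\sqrt d)+O(e^{-cd})$. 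For the far tail $r>R$ I would use that, since at most $s<\g d$ coordinates are contaminated and $\g<1/8$, the median is sandwiched between middle order statistics of the clean sample, $\xi^2_{(\lceil d/2\rceil-s)}\le\hat M\le\xi^2_{(\lceil d/2\rceil+s)}$, so that for $x$ large $\prob(\hat M>x)\le\prob(\#\{i:\xi_i^2>x\}\ge d/4)$; bounding this binomial probability by $\binom{d}{\lceil d/4\rceil}p_x^{\lceil d/4\rceil}$ with $p_x=\prob(\xi_1^2>x)\le\esp|\xi_1|^{2+\e}\,x^{-1-\e/2}$ (Markov) gives $\prob(\hat M>x)\le (C'/x^{1+\e/2})^{d/4}$ for $x\ge x_0(F,\e)$, which is integrable in $x$ and exponentially small in $d$ once $d\ge d_0$ (choosing $R$ large enough that $r>R$ forces $\beta(1+r)\ge x_0$, the intermediate range being absorbed into the $O(e^{-cd})$ term via the deviation bound). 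Collecting terms gives $\esp|\hat{\s}_{\sf med}^{2}-1|=O(\max(1/\sqrt d,s/d))$ for $d\ge d_0$, and the finitely many cases $d<d_0$ are trivial since $\esp\hat M\le\esp\xi^2_{(\lceil d/2\rceil+s)}\le\sum_i\esp\xi_i^2=d$ is bounded while $\max(1/\sqrt d,s/d)\ge 1/\sqrt{d_0}$.

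I expect the main obstacle to be the passage from the uniform control of $F_d$ to the bound on $\hat M$: one must use the positive-density hypothesis quantitatively and verify that $\hat M$ cannot leave the neighborhood of $\beta$ on which the density is bounded below, which is exactly what forces $s<\g d$ and the split at $t\asymp d$ producing the $e^{-cd}$ term. For the moment bound the delicate point is the far tail of $\hat M$ under only a $(2+\e)$-th moment assumption, handled by the order-statistic sandwich together with a Chernoff-type estimate on the binomial count rather than a direct moment computation.
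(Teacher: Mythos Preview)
Your argument is correct and follows essentially the same route as the paper: control $|F(\hat M)-\tfrac12|$ via the DKW inequality plus the $s/d$ perturbation from the contaminated coordinates, then invert using the positive density of $F$ near its median; for the expectation, integrate the tail bound on a bounded range and handle the far tail separately using the $(2+\epsilon)$-moment.

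There are two minor technical differences. For the deviation bound, the paper compares $F_d$ with the empirical c.d.f.\ of the \emph{clean subsample} $((\sigma\xi_i)^2:i\notin S)$ and starts from the defining inequality $|F_d(\hat M)-\tfrac12|\le|F_d(M)-\tfrac12|$; you compare with the empirical c.d.f.\ of the full noise vector $(\xi_i^2)_{i\le d}$ and use $|F_d(\hat M)-\tfrac12|\le 1/(2d)$ directly. Either works. For the far tail in the expectation bound, the paper takes a shorter path than your order-statistic sandwich plus binomial estimate: since $s<d/2$ forces $\hat M\le\max_{i\notin S}(\sigma\xi_i)^2$, one has
\[
\hat\sigma_{\sf med}^{2+\epsilon}\le\big(\max_{i\notin S}\xi_i^2/\beta\big)^{1+\epsilon/2}\le\beta^{-1-\epsilon/2}\sum_{i=1}^d|\xi_i|^{2+\epsilon},
\]
so with $Z=|\hat\sigma_{\sf med}^2-1|$ one gets $\esp(Z^{1+\epsilon})\le Cd$; then H\"older gives $\esp(Z\,\fcar_{Z\ge c_*/8})\le(\esp Z^{1+\epsilon})^{1/(1+\epsilon)}\,\prob(Z\ge c_*/8)^{\epsilon/(1+\epsilon)}\le Cd^{1/(1+\epsilon)}e^{-c'd}$. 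This avoids the explicit tail integration and the separate treatment of small $d$; your Chernoff-type argument is valid but heavier.
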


{
The main message of Proposition~\ref{prop:gao} is that the rate of convergence of 
$\hat{\s}_{\sf med}^2$ in probability and in expectation is as fast as 
\begin{equation}\label{rate:gao}
\max\left(\frac1{\sqrt{d}}, \frac{s}{d}\right)
\end{equation}
and it does not depend on $F$ when $F$ varies in a large class.
The role of Proposition~\ref{prop:gao} is to contrast the subsequent results of this section dealing 
with 
unknown distribution of noise and providing slower rates. It emphasizes the fact that the knowledge of 
the noise distribution is crucial as it leads to an improvement of the rate of 
estimating the variance.  

However, the rate \eqref{rate:gao} achieved by the median estimator is not necessarily optimal.
 As shown in the next proposition, in the case of Gaussian noise the optimal rate is even 
better: 
$$
\phi^{\circ}_{{\cal N}(0,1)}(s,d)= \max\left\{\frac{1}{\sqrt{d}},\frac{s}{d(1+\log_{+}(s^{2}/d))}\right\}.
$$ 
This rate is attained by an estimator that we are going to define now. We use the observation that, in the Gaussian case, 
the modulus of the empirical characteristic function $\varphi_d(t)=\frac{1}{d}\sum_{i=1}^d e^{itY_j}$ is within a constant 
factor from the Gaussian characteristic function $\exp(-\frac{t^2\s^{2}}{2})$ for any $t$.
This suggests the estimator 
$$
\tilde{v}^{2} = -\frac{2\log(|\varphi_{d}(\hat{t}_{1})|)}{\hat{t}_{1}^{2}}
$$
with a suitable choice of $t=\hat{t}_{1}$ that we further set as follows:
$$
\hat{t}_{1} = \frac1{2\tilde{\sigma}}\sqrt{\log\big(16(es/\sqrt{d}+1)\big)} ,
$$
where $\tilde{\sigma}$ is the preliminary estimator \eqref{mom} with some tuning parameter $\g\in(0,1/2]$. The final 
variance estimator is defined as a truncated version of $\tilde{v}^{2}$: 
\begin{equation}\label{definition_noisevarianceestimator_gauss}
    \hat{\s}^{2} = \left\{
  \begin{array}{ll}
 \tilde{v}^{2} & \ \text{if} \ |\varphi_{d}(\hat{t}_{1})|> (es/\sqrt{d}+1)^{-1}/4 ,\\
 \tilde{\sigma}^2 & \ \text{otherwise} .
  \end{array}
  \right.
\end{equation}

\begin{proposition}[Gaussian noise]\label{prop:variance:gauss}  The following two properties hold.
\begin{itemize}
\item[(i)] 
{ Let $s$ and $d$ be integers satisfying $1\le s< \lfloor \g d\rfloor/4$, where 
$\g\in(0,1/2]$ is the tuning parameter in the definition of $\tilde \s^2$. 
There exist absolute constants $C>0$ and $\g\in(0,1/2]$ such that the estimator $\hat{\s}^2$ defined in~(\ref{definition_noisevarianceestimator_gauss}) satisfies
\begin{equation*}\label{upperbound:variance:gauss}
\sup_{\s>0}\sup_{\|\bt\|_0\le s} \frac{\esp_{\bt, {\cal N}(0,1),\s}  
\left| \hat{\s}^2-\s^{2} \right|}{\s^{2}} \le C\phi^{\circ}_{{\cal N}(0,1)}(s,d).
	\end{equation*}

\item[(ii)] 
Let $s$ and $d$ be integers satisfying $1\le s\le d$ and
let $\ell(\cdot)$ be any loss function in the class $\mathcal L$.  Then, 
	\begin{equation*}\label{lowerbound:variance:gauss}
	\inf_{\hat{T}}  \sup_{\s>0} \sup_{\|\bt\|_0\le s} 
	\esp_{\bt,{\cal N}(0,1),\s}\,\ell \bigg( c(\phi^{\circ}_{{\cal N}(0,1)}(s,d))^{-1} 
\bigg|\frac{ \hat{T}}{\s^{2}}-1\bigg| \bigg)\ge c^{\prime},
	\end{equation*}
	where  $\inf_{\hat{T}}$ denotes the infimum over all estimators, and $c>0$, $c^{\prime}>0$ are constants that can depend only on $\ell(\cdot)$.
	}
	\end{itemize}

\end{proposition}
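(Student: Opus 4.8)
The plan is to treat the two assertions separately; part (i) is the substantial one.

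\textbf{Upper bound (i).} I would organise the proof around the event $\mathcal A=\{\tfrac12\le\tilde\s^2/\s^2\le\tfrac32\}$, on which, by Proposition~\ref{proposition_over} (valid for Gaussian noise, since $\mathcal N(0,1)\in\mathcal G_{2,\tau}\subset\mathcal P_{a',\tau'}$ for $a'>4$), $\prob_{\bt,\mathcal N(0,1),\s}(\mathcal A^c)\le e^{-cd}$ and also $\esp(\tilde\s^2-\s^2)^2\le C\s^4$, hence $\esp\tilde\s^4\le C'\s^4$. A first useful deterministic remark is that whenever $\hat\s^2$ falls in the $\tilde v^2$-branch one has $|\log|\varphi_d(\hat t_1)||\le\log\big(4(es/\sqrt d+1)\big)=\hat t_1^2\tilde\s^2$, so $0\le\tilde v^2\le2\tilde\s^2$; thus $\hat\s^2\in[0,2\tilde\s^2]$ always, $|\hat\s^2-\s^2|\le2\tilde\s^2+\s^2$, and by Cauchy--Schwarz the contribution to $\esp|\hat\s^2-\s^2|$ of $\mathcal A^c$ — and of any event of probability $\le e^{-cd}$ — is $O(\s^2e^{-cd/2})$, negligible against $\s^2/\sqrt d$. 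It then remains to bound the restriction to $\mathcal A$.

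On $\mathcal A$ the random point $\hat t_1$ lies in a fixed interval $[t_-,t_+]$ with $t_+-t_-\asymp\s^{-1}\sqrt{\log(es/\sqrt d+1)}$. Using the exact Gaussian identity $\esp_{\bt,\mathcal N(0,1),\s}\varphi_d(t)=\psi(t)e^{-t^2\s^2/2}$ with $\psi(t)=\tfrac1d\sum_{j}e^{it\t_j}$, so that $|\psi(t)|\in[1-2s/d,1]$, and writing on the $\tilde v^2$-branch
\[
\tilde v^{2}-\s^{2}=-\frac{2}{\hat t_1^{2}}\log\!\Big(|\psi(\hat t_1)|+\frac{|\varphi_d(\hat t_1)|-|\esp\varphi_d(\hat t_1)|}{e^{-\hat t_1^{2}\s^{2}/2}}\Big),
\]
together with $\tfrac{2}{\hat t_1^2}=\tfrac{2\tilde\s^2}{\log(4(es/\sqrt d+1))}$ and $1-|\psi(\hat t_1)|\le 2s/d$, the "signal bias'' part contributes $O\big(\tfrac{\s^2 s}{d\log(es/\sqrt d+1)}\big)$, which is $\le C\s^2\phi_{\mathcal N(0,1)}(s,d)$ after treating the sparse ($s\le\sqrt d$) and dense ($s>\sqrt d$) ranges separately. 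The remaining part equals $\tfrac2{\hat t_1^2}$ times the linearised logarithm of a quantity of size $Z\,e^{\hat t_1^{2}\s^{2}/2}$, where $Z:=|\varphi_d(\hat t_1)-\esp\varphi_d(\hat t_1)|$ and $e^{\hat t_1^2\s^2/2}\le4(es/\sqrt d+1)$ on $\mathcal A$; one needs $\esp Z=O(1/\sqrt d)$, and, for the linearisation as well as for checking that one is indeed in the $\tilde v^2$-branch (i.e. $|\varphi_d(\hat t_1)|>(es/\sqrt d+1)^{-1}/4$), one needs $e^{-\hat t_1^2\s^2/2}$ to dominate $Z$. This is where $\g$ must be taken small: one restricts further to $\{\tilde\s^2\ge(1-\eta)\s^2\}$, which has probability $\ge1-e^{-cd}$ once $\g$ is small relative to a fixed small $\eta$ (since then the median of the block means exceeds $(1-\eta)\s^2$ save with exponentially small probability), and on this event $e^{-\hat t_1^2\s^2/2}\ge(4(es/\sqrt d+1))^{-1/(2(1-\eta))}\gg1/\sqrt d$ because $s<\lfloor\g d\rfloor/4$.

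The crux is $\esp Z=O(1/\sqrt d)$, free of spurious logarithmic factors. Here one must split $\varphi_d=\varphi_d^{(0)}+\varphi_d^{(1)}$ into its pure-noise part over $\{j:\t_j=0\}$ and its signal part over $\{j:\t_j\neq0\}$; this separation is essential because the $\t_j$ may be arbitrarily large, so $\varphi_d$ as a whole has no useful Lipschitz control, whereas $|\varphi_d^{(1)}(t)|\le s/d$ uniformly. For $\varphi_d^{(1)}$ one uses that it is a sum of at most $s$ independent terms with $\var(\varphi_d^{(1)}(t))\le s/d^2$, so $\esp|\varphi_d^{(1)}(\hat t_1)-\esp\varphi_d^{(1)}(\hat t_1)|\lesssim\sqrt s/d\le1/\sqrt d$, the dependence of $\hat t_1$ on the data being absorbed by a conditioning argument that exploits that the median-of-means $\tilde\s^2$ essentially ignores the $\le s<m/2$ blocks carrying a signal coordinate. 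For $\varphi_d^{(0)}$ one uses a sub-Gaussian chaining (Dudley) bound over $t\in[t_-,t_+]$, with increments $\|\varphi_d^{(0)}(t)-\varphi_d^{(0)}(t')\|_{\psi_2}\lesssim\min(\s|t-t'|,1)/\sqrt d$, yielding $\esp\sup_{[t_-,t_+]}|\varphi_d^{(0)}-\esp\varphi_d^{(0)}|=O(1/\sqrt d)$, the length of $[t_-,t_+]$ entering only through a slowly varying factor. I expect this chaining step — combined with threading the measurability of $\hat t_1$ correctly and keeping the deviation logarithm-free — to be the main technical obstacle. Finally, on $\mathcal A\cap\{\tilde\s^2\ge(1-\eta)\s^2\}$ the $\tilde\s^2$-branch is seen to occur only on an event of probability $\le e^{-cd}$, hence contributes negligibly, and collecting the pieces gives $\esp|\hat\s^2-\s^2|\le C\s^2\phi_{\mathcal N(0,1)}(s,d)$.

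\textbf{Lower bound (ii).} Since $\ell\in\mathcal L$ is nondecreasing with $\ell(0)=0$, $\ell\not\equiv0$, a standard reduction reduces the claim to exhibiting, for a suitable constant $c$, two data-generating mechanisms that are indistinguishable in the Le Cam sense and whose noise variances differ by at least $c\s^2\phi_{\mathcal N(0,1)}(s,d)$. The term $1/\sqrt d$ is given by the classical pair $(\bt,\s^2)=(0,1)$ versus $(0,1+c/\sqrt d)$, whose product Gaussian laws have bounded Kullback--Leibler divergence, for every $s$. The term $\tfrac{s}{d(1+\log_+(s^2/d))}$, which dominates only when $s>\sqrt d$, comes from a Bayesian two-point argument: against $(0,1)$ one puts the mixture, over a uniformly random $s$-subset $S$ of $\{1,\dots,d\}$, of the mechanisms with noise variance $1-c\phi_{\mathcal N(0,1)}(s,d)$ and signal supported on $S$ with entries calibrated at the effective detection scale $\asymp\sqrt{\log(s^2/d)}$; a moment/combinatorial estimate bounds the resulting $\chi^2$-divergence from $(0,1)^{\otimes d}$, and the $\log_+(s^2/d)$ factor emerges on optimising the magnitude of the masking signal subject to this bound. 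Taking the larger of the two constructions yields the stated inequality.
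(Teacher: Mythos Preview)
Your skeleton matches the paper's exactly: restrict to the high-probability event for $\tilde\s^2$, write $\tilde v^2-\s^2$ as a ``signal bias'' $-\tfrac{2}{\hat t_1^2}\log|\psi(\hat t_1)|$ of size $O(\s^2 s/(dL))$, $L=\log(4(es/\sqrt d+1))$, plus a stochastic term governed by $|\varphi_d(\hat t_1)-\varphi(\hat t_1)|$, and dispose of the remaining branches via Cauchy--Schwarz and Proposition~\ref{proposition_over}. The execution differs in how the random evaluation point is handled. The paper short-circuits the issue by asserting $\esp\big(\sqrt d\,\sup_{v\in\RR}|\varphi_d(v)-\varphi(v)|\big)\le C$ (its display \eqref{emp}); your chaining over $[t_-,t_+]$ with a noise/signal split is more careful and in fact repairs a gap, since for observations with a density the empirical characteristic function is almost periodic and $\sup_{v\in\RR}|\varphi_d-\varphi|\ge 1$ almost surely, so \eqref{emp} cannot hold as written. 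The noise-part Dudley bound you sketch is correct (the $\psi_2$ diameter is capped at $1/\sqrt d$ by boundedness, so the entropy integral stays $O(1/\sqrt d)$ up to a harmless $\sqrt{\log L}$). Your tightening to $\{\tilde\s^2\ge(1-\eta)\s^2\}$ is also a genuine addition: on the paper's $\calB_1$ one only gets $\varphi_0(\hat t_1)\ge(4(es/\sqrt d+1))^{-1}$, which is \emph{not} enough to force the $\tilde v^2$-branch (the paper's displayed inequality $\varphi_0(\hat t_1)\ge(es/\sqrt d+1)^{-1/3}$ there is incorrect), whereas your event gives $\varphi_0(\hat t_1)\gtrsim(es/\sqrt d+1)^{-1/(2(1-\eta))}$ and closes this. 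The one place your outline is thin is the signal part: the deterministic $2s/d$ bound is too crude in the dense zone, so you do need $\esp|\varphi_d^{(1)}(\hat t_1)-\esp\varphi_d^{(1)}(\hat t_1)|\lesssim\sqrt s/d$, and your ``median-of-means ignores the contaminated blocks'' conditioning must be made precise --- e.g.\ by comparing $\tilde\s^2$ to the median over the $m-s$ clean blocks and checking that the resulting perturbation of $\hat t_1$ is $O(\s^{-1}/\sqrt m)$ with exponential probability.

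\textbf{Part (ii).} Here the routes diverge. The paper does not build a direct prior for the variance: the $1/\sqrt d$ term is obtained as you say, but for the $\tfrac{s}{d(1+\log_+(s^2/d))}$ term the paper invokes a reduction lemma (Lemma~\ref{lemma:lowerbound:norm:variance}) that turns any norm-estimation lower bound into a variance-estimation lower bound, and then plugs in Proposition~\ref{prop:norm:gauss}(ii). Your direct Bayesian two-point construction is the natural alternative; it would, however, end up reproducing the hard Fourier-analytic prior from the Supplement (Lemmas~\ref{lem:fourierinf}--\ref{lem:chi2VC}) that underlies Proposition~\ref{prop:norm:gauss}(ii), so the paper's modular route is shorter in this context without being essentially different in substance.
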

}

Estimators of variance or covariance matrix based on the empirical characteristic function have been 
studied in several papers \cite{ButuceaMatias2005,CaiJin2010,BelomestnyTrabsTsybakov2017,CarpentierVerzelen2019}. 
The setting in \cite{ButuceaMatias2005,CaiJin2010,BelomestnyTrabsTsybakov2017} is different from ours as 
those papers deal with the model where the non-zero components of $\bt$ are random with a 
smooth distribution density. The estimators in \cite{ButuceaMatias2005,CaiJin2010} are also quite different. On the other hand, 
\cite{BelomestnyTrabsTsybakov2017,CarpentierVerzelen2019} consider estimators
close to $\tilde{v}^{2}$. In particular, \cite{CarpentierVerzelen2019} uses a similar pilot 
estimator for testing in the sparse vector model where it is assumed 
that $\s\in [\s_{-},\s_+]$,  
$0<\s_{-}<\s_+<\infty$, and the estimator depends on $\s_+$. 
Although \cite{CarpentierVerzelen2019} 
does not provide explicitly stated result about the rate of this estimator, the   
proofs in \cite{CarpentierVerzelen2019} come close to it and we 
believe that it satisfies an upper bound as in item (i) of Proposition 
\ref{upperbound:variance:gauss} with $\sup_{\s>0}$ replaced by 
$\sup_{\s\in[\s_{-},\s_+]}$.


%
%
%
%

\subsection{Distribution-free variance estimators}
\label{sec:free}

The main drawback of the estimator $\hat{\s}_{\sf med}^2$ is the dependence on the parameter $\b$. 
It  reflects the fact that the estimator is tailored for a given and known distribution of noise $F$.
Furthermore, as shown below,  the rate~\eqref{rate:gao} cannot be achieved if it is only known that $F$ 
belongs to one of the classes of distributions that we consider in this paper.

Instead of using one particular quantile, like the median in Section \ref{sec:median}, one can estimate $\s^2$ by an integral over all quantiles, which allows one to avoid considering distribution-dependent quantities like~($\ref{equation_quantile}$).

Indeed, with the notation $q_\a=G^{-1}(1-\a)$ where $G$ is the c.d.f. of $(\s\xi_1)^2$ and $0<\a<1$, the variance of the noise can be expressed as
\begin{equation*}
	\s^2 = \esp(\s\xi_1)^2 = \int_0^1 q_\a\,d\a.
\end{equation*}
Discarding the higher order quantiles that are dubious in the presence of outliers and replacing $q_\a$ by the empirical quantile $\hat{q}_\a$ of level $\a$ we obtain the following estimator
\begin{equation}\label{definition_noisevarianceestimator}
	\hat{\s}^2 = \int_0^{1-s/d} \hat{q}_\a\,d\a = \frac1d \sum_{k=1}^{d-s} Y^2_{(k)},
\end{equation}
where  $Y^2_{(1)}\le\ldots\le Y^2_{(d)}$ are the ordered values of the squared observations $Y_1^2,\dots, Y_d^2$.  
Note that $\hat{\s}^2$ is an $L$-estimator, \cf\cite{Huber1981}.  Also, up to a constant factor, $\hat{\s}^2$ coincides with the statistic used in~\cite{CollierCommingesTsybakov2017} .

The following theorem provides an upper bound on the risk  of the estimator $\hat{\s}^2$ under the assumption that the noise belongs to the class~$\mathcal{G}_{a,\tau}$.
Set
$$
\phi_{\sf exp}(s,d) = \max\left(\frac1{\sqrt{d}}, \frac{s}{d}\log^{2/a}\left(\frac{ed}{s}\right)\right). 
$$

\begin{theorem}\label{theorem_upperbound_noise_subgaussian}
	Let $\tau>0$, $a>0$, and let $s,d$ be integers satisfying $1\le s < d/2$. Then,    the estimator $\hat{\s}^2$ defined in~(\ref{definition_noisevarianceestimator}) satisfies
	\begin{equation}
		 \sup_{P_\xi\in\mathcal{G}_{a,\tau}} \sup_{\s>0} \sup_{\|\bt\|_0\le s} 
\frac{\esp_{\bt,P_\xi,\s} \big(\hat{\s}^2-\s^2\big)^2}{\s^4} \le C \phi_{\sf exp}^2(s,d) 
	\end{equation}
	where $C>0$ is a constant depending only on $a$ and $\tau$. 
\end{theorem}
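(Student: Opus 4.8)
The plan is to reduce by scaling to $\s=1$, then to compare the trimmed estimator with the average of the squared \emph{noise} coordinates and to bound the discrepancy by the sum of the $s$ largest order statistics of the pure-noise sample, which is in turn controlled through the tail bound defining $\mathcal{G}_{a,\tau}$.

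By the change of variables $Y_i\mapsto Y_i/\s$, $\t_i\mapsto\t_i/\s$ the left-hand side equals $\sup\esp_{\bt,P_\xi,1}(\hat\s^2-1)^2$, so I assume $\s=1$ and write $\mathcal O=\{i:\t_i=0\}$, $s_0=\|\bt\|_0\le s$, so that $Y_i^2=\xi_i^2$ for $i\in\mathcal O$ and $|\mathcal O|=d-s_0$. If $s\ge d/6$ the result is trivial: since $Y^2_{(k)}$ is at most the $k$-th smallest value of $\{\xi_i^2:i\in\mathcal O\}$, one has $\hat\s^2\le\frac1d\sum_{i\in\mathcal O}\xi_i^2\le\frac1d\sum_{i=1}^d\xi_i^2$, whose second moment is bounded by a constant depending only on $(a,\tau)$, while $\phi_{\sf exp}(s,d)$ is bounded below by a positive constant in this range. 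So I assume $1\le s<d/6$ henceforth.

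The first main step is a deterministic inequality. Writing $\hat q=Y^2_{(d-s)}$, the identity $\sum_{k=1}^{d-s}Y^2_{(k)}=\sum_{i=1}^d(Y_i^2\wedge\hat q)-s\hat q$ (valid even with ties) together with the elementary order-statistic comparisons $Z_{(k-s_0)}\le Y^2_{(k)}\le Z_{(k)}$, where $Z_{(1)}\le\cdots\le Z_{(d-s_0)}$ are the ordered clean squares, yields
\[
0\ \le\ \frac1d\sum_{i\in\mathcal O}\xi_i^2-\hat\s^2\ \le\ \frac1d\sum_{i\in\mathcal O:\,\xi_i^2>\hat q}\xi_i^2\ \le\ \frac1d\sum_{k=1}^{s}\eta^{(k)},
\]
where $\eta^{(1)}\ge\cdots\ge\eta^{(d-s_0)}$ are the clean squares in decreasing order; the last bound uses $\hat q\ge Z_{(d-s-s_0)}=\eta^{(s+1)}$, so that at most $s$ clean coordinates exceed $\hat q$. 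It therefore suffices to bound $\esp(\frac1d\sum_{i\in\mathcal O}\xi_i^2-1)^2$ and $\esp(\frac1d\sum_{k=1}^s\eta^{(k)})^2$. The first equals $\frac{d-s_0}{d^2}\var(\xi_1^2)+(s_0/d)^2\le C/d+(s/d)^2\le C'\phi_{\sf exp}^2(s,d)$, because all moments of $\xi_1^2$ are bounded uniformly over $\mathcal{G}_{a,\tau}$ by the tail condition.

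For the second bound I need $\esp(\sum_{k=1}^s\eta^{(k)})^2\le C\max\{d,\,s^2\log^{4/a}(ed/s)\}$, which equals $Cd^2\phi_{\sf exp}^2(s,d)$. Let $v^*$ be the upper $(1-s/(2|\mathcal O|))$-quantile of $\xi_1^2$; the tail bound $\prob(\xi_1^2>t)\le 2e^{-(t/\tau^2)^{a/2}}$ for $t\ge4$ gives $v^*\le C\log^{2/a}(ed/s)$ as soon as $\log(ed/s)$ exceeds a constant (i.e.\ unless $s$ is a constant fraction of $d$, in which case we are in the trivial range). On the event $\mathcal E$ that at most $s$ clean coordinates exceed $v^*$ --- which has probability $\ge 1-e^{-cs}$ by the multiplicative Chernoff bound applied to $\mathrm{Bin}(|\mathcal O|,\prob(\xi_1^2>v^*))$, whose mean is at most $s/2$ --- one has $\eta^{(s)}\le v^*$, whence $\sum_{k=1}^s\eta^{(k)}\le\sum_{i\in\mathcal O}\xi_i^2\fcar_{\xi_i^2>v^*}+sv^*$. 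Here $(sv^*)^2\le Cs^2\log^{4/a}(ed/s)$, and since the summands $W_i:=\xi_i^2\fcar_{\xi_i^2>v^*}$ are i.i.d.\ with $\esp W_1\le C\frac sd\log^{2/a}(ed/s)$ and $\esp W_1^2\le C\frac sd\log^{4/a}(ed/s)$ (truncated-moment estimates obtained by integrating $\int_{v^*}^\infty\prob(\xi_1^2>t)\,dt$ and $\int_{v^*}^\infty t\,\prob(\xi_1^2>t)\,dt$), one gets $\esp(\sum_{i\in\mathcal O}W_i)^2=|\mathcal O|\var(W_1)+(|\mathcal O|\esp W_1)^2\le Cs^2\log^{4/a}(ed/s)$. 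Thus $\esp[(\sum_{k=1}^s\eta^{(k)})^2\fcar_{\mathcal E}]\le C\max\{d,s^2\log^{4/a}(ed/s)\}$. On $\mathcal E^c$ I use $\sum_{k=1}^s\eta^{(k)}\le s\max_i\xi_i^2$ and Cauchy--Schwarz: $\esp[(\sum_{k=1}^s\eta^{(k)})^2\fcar_{\mathcal E^c}]\le(\esp (s\max_i\xi_i^2)^4)^{1/2}(\prob(\mathcal E^c))^{1/2}\le(Cs^4d)^{1/2}e^{-cs/2}\le C\sqrt d$, since $s^2e^{-cs/2}$ is bounded; this is $\le Cd\le C\max\{d,s^2\log^{4/a}(ed/s)\}$. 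Combining the three displays gives $\esp(\hat\s^2-1)^2\le C\phi_{\sf exp}^2(s,d)$.

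The main obstacle is the pair of truncated-moment estimates for $W_1$ together with the requirement that every constant depend only on $(a,\tau)$: this forces a careful treatment of the threshold $t\ge2$ in the definition of $\mathcal{G}_{a,\tau}$ and of the sub-case $v^*<4$ (where one splits the integrals at $C\log^{2/a}(ed/s)$, using $\prob(\xi_1^2\ge4)\le s/d$ below that level and the tail bound above), and it is also why the deviation bound on $\mathcal E^c$ must be stated in multiplicative Chernoff form --- valid, and of order $e^{-cs}$, for every $s\ge1$, even though for bounded $s$ it is only a constant $<1$, which is precisely what the fourth-moment Cauchy--Schwarz argument handles. The order-statistic comparisons and the tie analysis in $\sum_{k=1}^{d-s}Y^2_{(k)}=\sum_i(Y_i^2\wedge\hat q)-s\hat q$ are elementary but should be recorded carefully.
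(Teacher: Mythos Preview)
Your proof is correct and follows the same skeleton as the paper's: sandwich $\hat\s^2$ between two noise-only quantities and bound the gap by the top $s$ (or $2s$) squared order statistics of the noise. The execution differs in two places.

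\textbf{The sandwich.} The paper writes the cruder two-sided bound
\[
\frac{\s^2}{d}\sum_{i=1}^{d-2s}\xi_{(i)}^2\ \le\ \hat\s^2\ \le\ \frac{\s^2}{d}\sum_{i\in S^c}\xi_i^2,
\]
so that $\big(\hat\s^2-\tfrac{\s^2}{d}\sum_i\xi_i^2\big)^2\le\tfrac{\s^4}{d^2}\big(\sum_{i=d-2s+1}^d\xi_{(i)}^2\big)^2$. Your identity $\sum_{k=1}^{d-s}Y_{(k)}^2=\sum_i(Y_i^2\wedge\hat q)-s\hat q$ leads to the same conclusion with $s$ in place of $2s$ and only the clean order statistics, which is slightly sharper but needs the extra accounting of how many dirty observations exceed $\hat q$. (Your intermediate inequality ``$\le\tfrac1d\sum_{i\in\mathcal O:\xi_i^2>\hat q}\xi_i^2$'' is correct but not a one-liner from the identity: one has to check that $(s-N)\hat q\le\sum_{i\notin\mathcal O}(Y_i^2\wedge\hat q)$, which follows because exactly $s-N$ dirty values exceed $\hat q$.)

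\textbf{Bounding the top-$s$ sum.} The paper applies Minkowski and then its Lemma~\ref{lemma_esp4_subgaussian},
\[
\esp\Big(\sum_{i=1}^{2s}\xi_{(d-i+1)}^2\Big)^2\le\Big(\sum_{i=1}^{2s}\sqrt{\esp\,\xi_{(d-i+1)}^4}\Big)^2\le C\Big(\sum_{i=1}^{2s}\log^{2/a}(ed/i)\Big)^2\le C\,s^2\log^{4/a}(ed/s),
\]
which is a two-line finish. Your route---threshold at the $(s/2|\mathcal O|)$-quantile $v^*$, Chernoff for the event $\mathcal E$, truncated first/second moments of $W_i=\xi_i^2\fcar_{\xi_i^2>v^*}$, and a fourth-moment Cauchy--Schwarz on $\mathcal E^c$---yields the same $s^2\log^{4/a}(ed/s)$ bound but is self-contained (no per-rank moment lemma). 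One small slip: on $\mathcal E$ you only get $\eta^{(s+1)}\le v^*$, not $\eta^{(s)}\le v^*$; however your stated inequality $\sum_{k=1}^s\eta^{(k)}\le\sum_{i\in\mathcal O}W_i+sv^*$ is still valid, since the top $s$ values are either $>v^*$ (hence in $\sum W_i$) or $\le v^*$. Also, when $v^*<4$ the tail bound from $\mathcal G_{a,\tau}$ is unavailable, but as you note this forces $d/s$ to be bounded by a constant depending on $(a,\tau)$ and falls into the trivial range; just be sure that range is ``$s\ge c(a,\tau)d$'' rather than the fixed $s\ge d/6$.

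In short: same decomposition, different endgame. The paper's version is shorter because it amortises the order-statistic work into Lemma~\ref{lemma_esp4_subgaussian}; yours avoids that lemma at the price of the threshold/Chernoff machinery.
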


The next theorem establishes the performance of variance estimation in the case of distributions with polynomially decaying tails.
Set
$$
\phi_{\sf pol}(s,d) =  \max\left(\frac1{\sqrt{d}}, \Big(\frac{s}{d}\Big)^{1-\frac{2}{a}} \right). 
$$

\begin{theorem}\label{theorem_upperbound_noise_polynomial}
	Let $\tau>0,a>4$, and let $s,d$ be integers satisfying $1\le s< d/2$. Then,    the estimator $\hat{\s}^2$ defined in~(\ref{definition_noisevarianceestimator}) satisfies	
	\begin{equation}
		\sup_{P_\xi\in\mathcal{P}_{a,\tau}} \sup_{\s>0} 
\sup_{\|\bt\|_0\le s} \frac{\esp_{\bt,P_\xi,\s} \big(\hat{\s}^2-\s^2\big)^2}{\s^4} \le C \phi_{\sf pol}^2(s,d),
	\end{equation}
	where $C>0$ is a constant depending only on $a$ and $\tau$. 
\end{theorem}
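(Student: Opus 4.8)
The plan is to use scale invariance to reduce to $\s=1$ (the normalized risk is $2$-homogeneous in $Y$, so $\sup_{\s>0}\sup_{\|\bt\|_0\le s}\esp_{\bt,P_\xi,\s}(\hat\s^2-\s^2)^2/\s^4=\sup_{\|\bt\|_0\le s}\esp_{\bt,P_\xi,1}(\hat\s^2-1)^2$), and then to sandwich the $L$-estimator $\hat\s^2=\tfrac1d\sum_{k=1}^{d-s}Y^2_{(k)}$ between two functionals of the pure noise. Let $W_{(1)}\le\cdots\le W_{(d)}$ be the order statistics of $\xi_1^2,\dots,\xi_d^2$ and put $S=\{i:\t_i\neq0\}$, so $|S|\le s<d/2$ and in particular $d-2s\ge1$. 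Writing $\hat\s^2=\tfrac1d\min_{|A|=d-s}\sum_{i\in A}Y_i^2$ and taking for $A$ the $d-s$ indices outside $S$ carrying the smallest values of $\xi_i^2$ gives the upper bound $\hat\s^2\le\tfrac1d\sum_{i\notin S}\xi_i^2\le\tfrac1d\sum_{i=1}^d \xi_i^2$. For the lower bound, drop from an arbitrary $A$ the at most $s$ indices it shares with $S$: then $|A\cap S^c|\ge d-2s$, the remaining terms are nonnegative, and the order statistics of a subsample dominate rankwise those of the full sample, so $\sum_{i\in A}Y_i^2\ge\sum_{i\in A\cap S^c}\xi_i^2\ge\sum_{k=1}^{d-2s}W_{(k)}$; hence $\hat\s^2\ge\tfrac1d\sum_{k=1}^{d-2s}W_{(k)}=\tfrac1d\sum_{i=1}^d \xi_i^2-\tfrac1d\sum_{k=d-2s+1}^d W_{(k)}$. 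This combinatorial step is exact and is precisely what neutralizes the $\le s$ contaminated coordinates.

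Next, split the quadratic risk as $\esp(\hat\s^2-1)^2=\esp(\hat\s^2-1)_+^2+\esp(1-\hat\s^2)_+^2$. The upper-deviation part is controlled directly: $(\hat\s^2-1)_+\le\big|\tfrac1d\sum_i\xi_i^2-1\big|$, so $\esp(\hat\s^2-1)_+^2\le\tfrac1d\var(\xi_1^2)\le C/d$, where finiteness of $\esp\xi_1^4$ (hence of $\var(\xi_1^2)$), with a bound depending only on $(a,\tau)$, is already a point where $a>4$ enters. For the lower-deviation part, $(1-\hat\s^2)_+\le\big|\tfrac1d\sum_i\xi_i^2-1\big|+\tfrac1d\sum_{k=d-2s+1}^d W_{(k)}$, so everything reduces to the single second-moment estimate $\esp\bigl(\tfrac1d\sum_{k=d-2s+1}^d W_{(k)}\bigr)^2\le C(s/d)^{2(1-2/a)}$ for the sum of the $2s$ largest among $d$ i.i.d.\ heavy-tailed variables $\xi_i^2$.

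This last bound is the heart of the argument. I would fix the deterministic level $q=\tau^2(d/s)^{2/a}$. If $q<4$, i.e.\ $s/d$ exceeds a constant depending only on $(a,\tau)$, then $\phi_{\sf pol}(s,d)$ is of constant order and the theorem is trivial since $\esp\hat\s^4\le\esp(\tfrac1d\sum_i\xi_i^2)^2\le C$; so assume $q\ge4$, where \eqref{definition_polynomial} yields $\prob(\xi_1^2>t)\le(\tau^2/t)^{a/2}$ for all $t\ge q$ and in particular $\prob(\xi_1^2>q)\le s/d$. The key deterministic inequality is $\sum_{k=d-2s+1}^d W_{(k)}\le\sum_{i=1}^d\xi_i^2\,\fcar_{\xi_i^2>q}+2sq$, valid in all cases: if more than $2s$ of the $\xi_i^2$ exceed $q$ the first term already dominates the top block, otherwise the at most $2s$ members of the top block lying below $q$ contribute at most $2sq$; here $\tfrac{2sq}{d}=2\tau^2(s/d)^{1-2/a}$. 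Squaring and taking expectations, independence reduces the stochastic term to the truncated moments $\esp(\xi_1^2\fcar_{\xi_1^2>q})$ and $\esp(\xi_1^4\fcar_{\xi_1^2>q})$, evaluated by integrating the tail bound by parts: the first is $\le C(s/d)^{1-2/a}$, while the second is $\le Cq^{2-a/2}$, of order $(d/s)^{4/a-1}$, the convergence of $\int_q^\infty t^{1-a/2}\,dt$ being exactly the place where $a>4$ is indispensable. Since $s\ge1$, $\tfrac1d\esp(\xi_1^4\fcar_{\xi_1^2>q})\le\tfrac Cd(d/s)^{4/a-1}\le C(s/d)^{2(1-2/a)}$, and together with $\bigl(\esp(\xi_1^2\fcar_{\xi_1^2>q})\bigr)^2\le C(s/d)^{2(1-2/a)}$ and $(2sq/d)^2\le C(s/d)^{2(1-2/a)}$ this gives the claimed top-order-statistics bound.

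Assembling the pieces yields $\esp(\hat\s^2-1)^2\le C\bigl(\tfrac1d+(s/d)^{2(1-2/a)}\bigr)=C\phi_{\sf pol}^2(s,d)$, and undoing the scaling proves the theorem uniformly over $P_\xi\in\mathcal P_{a,\tau}$, $\s>0$ and $\|\bt\|_0\le s$. The only genuinely delicate point, as opposed to bookkeeping, is the moment bound on the top $2s$ order statistics just described: making the deterministic truncation at level $q$ uniform over the whole range $1\le s<d/2$, cleanly separating off the trivial ``$s/d$ bounded below'' regime, and verifying that all constants in the integration-by-parts estimates depend only on $(a,\tau)$. Everything else is the exact combinatorial sandwich together with routine second-moment computations for sums of i.i.d.\ variables with a finite fourth moment.
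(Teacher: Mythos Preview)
Your proof is correct and follows the same overall architecture as the paper: the same combinatorial sandwich
\[
\frac{1}{d}\sum_{k=1}^{d-2s}\xi_{(k)}^2 \;\le\; \hat\s^2 \;\le\; \frac{1}{d}\sum_{i=1}^d\xi_i^2,
\]
the same decomposition into the fluctuation of the empirical second moment (giving the $1/d$ term) plus the contribution of the top $2s$ order statistics of $\xi_1^2,\dots,\xi_d^2$ (giving the $(s/d)^{2(1-2/a)}$ term). The only substantive difference is how you bound $\esp\big(\tfrac1d\sum_{k=d-2s+1}^d W_{(k)}\big)^2$. The paper uses Minkowski's inequality together with the individual order-statistic moment bound $\esp\,\xi_{(d-i+1)}^4\le C(d/i)^{4/a}$ from Lemma~\ref{lemma_esp4_polynomial} (this is where $a>4$ enters for them), and then sums. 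You instead fix a single truncation level $q=\tau^2(d/s)^{2/a}$, use the deterministic inequality $\sum_{k=d-2s+1}^d W_{(k)}\le\sum_i\xi_i^2\fcar_{\xi_i^2>q}+2sq$, and compute truncated first and second moments directly. Both routes are standard; yours is a bit more self-contained (no auxiliary lemma on order statistics), while the paper's makes the rate emerge transparently term by term and avoids your side case $q<4$. Your split into $(\hat\s^2-1)_+^2+(1-\hat\s^2)_+^2$ is harmless but unnecessary: the paper simply observes that $|\hat\s^2-\tfrac1d\sum_i\xi_i^2|\le\tfrac1d\sum_{k=d-2s+1}^d W_{(k)}$ directly from the sandwich, which is slightly cleaner.
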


We assume here that the noise distribution has  a moment of order greater than 4, which is close to the minimum requirement since we deal with the expected squared error of a quadratic function of the observations.

We now state the lower bounds matching the results of 
Theorems~\ref{theorem_upperbound_noise_subgaussian} and \ref{theorem_upperbound_noise_polynomial}.
\begin{theorem}\label{theorem_lowerbound_noise_subgaussian}
	Let $\tau>0$, $a>0$, and let $s,d$ be integers satisfying $1\le s\le d$. Let $\ell(\cdot)$ be any loss function in the class $\mathcal L$. Then, 
		\begin{equation}
		\inf_{\hat{T}} \sup_{P_\xi\in\mathcal{G}_{a,\tau}} \sup_{\s>0} \sup_{\|\bt\|_0\le s}\esp_{\bt,P_\xi,\s}\,\ell \Big( c(\phi_{\sf exp}(s,d))^{-1} \Big| \frac{ \hat{T}}{\s^2} -1\Big| \Big)\ge c',
	\end{equation}
where $\inf_{\hat{T}}$ denotes the infimum over all estimators and $c>0$, $c'>0$ are constants depending only on $\ell(\cdot)$, $a$ and $\tau$.
\end{theorem}

Theorems~\ref{theorem_upperbound_noise_subgaussian} and~\ref{theorem_lowerbound_noise_subgaussian} imply that the estimator $\hat{\s}^2$ is rate optimal in a minimax sense when the noise belongs to $\mathcal{G}_{a,\tau}$, in particular when it is sub-Gaussian.
Interestingly, an extra logarithmic factor appears in the optimal rate when passing from the pure 
Gaussian distribution of $\xi_i$'s (\cf Proposition~\ref{prop:variance:gauss}) to the class of all sub-Gaussian distributions. This factor can be seen as a price to pay for the lack of information regarding the exact form of the distribution.

Under polynomial tail assumption on the noise, we have the following minimax lower bound.

\begin{theorem}\label{theorem_lowerbound_noise_polynomial}
Let $\tau>0$, $a \ge 2$, and let $s,d$ be integers satisfying $1\le s\le d$. Let $\ell(\cdot)$ be any loss function in the class $\mathcal L$. Then, 	
	\begin{equation}
	\inf_{\hat{T}}  \sup_{P_\xi\in\mathcal{P}_{a,\tau}} \sup_{\s>0} \sup_{\|\bt\|_0\le s} 
	\esp_{\bt,P_\xi,\s}\,\ell \Big( c(\phi_{\sf pol}(s,d))^{-1} \Big| \frac{ \hat{T}}{\s^2} -1\Big| \Big)\ge c'
	\end{equation}
where $\inf_{\hat{T}}$ denotes the infimum over all estimators and $c>0$, $c'>0$ are constants depending only on $\ell(\cdot)$, $a$ and $\tau$.
\end{theorem}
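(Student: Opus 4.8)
The plan is to reduce the problem to a two-point (or more precisely, a two-family) testing argument in the standard way: it suffices to exhibit two configurations of the triplet $(\bt,P_\xi,\s)$ that are statistically indistinguishable (bounded $\chi^2$- or KL-divergence between the associated product measures) while the corresponding values of $\s^2$ differ by a factor of order $\phi_{\sf pol}(s,d)$. Since $\phi_{\sf pol}(s,d)=\max\big(d^{-1/2},(s/d)^{1-2/a}\big)$ is a maximum of two terms, I would treat the two regimes separately and take the worse of the two lower bounds.

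For the term $1/\sqrt d$: this is the classical parametric rate for estimating a variance from $d$ i.i.d. observations, and it should already arise with $\bt=0$. I would fix a smooth reference density in $\mathcal{P}_{a,\tau}$ (for instance a suitably scaled Student-type or truncated-polynomial-tail density with unit second moment) and perturb the scale by $1\pm c/\sqrt d$; the product measures over $d$ coordinates then have $\chi^2$-divergence bounded by a constant, and $|\s_0^2-\s_1^2|\asymp \s^2/\sqrt d$, giving the first term via the standard reduction (Le Cam's two-point lemma with the loss function $\ell$ in $\mathcal L$, using that $\ell$ is nondecreasing and not identically zero).

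For the term $(s/d)^{1-2/a}$: here the sparsity and the heavy tails interact. The idea is that under $H_0$ one takes $\bt=0$ and noise level $\s$, while under $H_1$ one keeps the nominal noise level but makes $s$ of the coordinates carry an extra "spike" that mimics a heavier-tailed pure-noise component — equivalently, one inflates the variance to $\s^2(1+\delta)$ with $\delta\asymp(s/d)^{1-2/a}$ by placing, on $s$ randomly chosen coordinates, a noise distribution that still lies in $\mathcal{P}_{a,\tau}$ but whose large deviations are calibrated so that a single coordinate changes the law by only $O(1)$ in an appropriate sense and the $s$ coordinates jointly by $O(1)$. Concretely I would use a mixture: the alternative's per-coordinate marginal is $(1-s/d)$ times the base law plus $(s/d)$ times a rescaled base law carrying the excess variance, and then bound the $\chi^2$-divergence between the two $d$-fold products by $\big(1+\tfrac1d\cdot(\text{single-coordinate }\chi^2)\big)^d-1$; the single-coordinate $\chi^2$ contribution, for polynomial tails of index $a$, scales like $(s/d)^{-(1-2/a)}$-compatibly so that the product stays $O(1)$ precisely when $\delta\asymp(s/d)^{1-2/a}$. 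This is essentially the construction already used (for the upper-bound matching rate) in Theorem~\ref{theorem_upperbound_noise_polynomial} and in the companion lower bounds of Theorem~\ref{theorem_lowerbound_norm_subgaussian}, so I would reuse that machinery, in particular the computation showing $\mathcal{P}_{a,\tau}$ is stable under the needed mixtures once $c$ is small.

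The main obstacle is the second regime: one must design the heavy-tailed perturbation so that three constraints hold simultaneously — (i) both marginals genuinely belong to $\mathcal{P}_{a,\tau}$ (unit second moment, the exact tail bound $\prob(|\xi|>t)\le(\tau/t)^a$ for all $t\ge2$, not merely up to constants), (ii) the $d$-fold product $\chi^2$- (or Hellinger-) divergence is bounded by a constant, and (iii) the variance gap is of the exact order $(s/d)^{1-2/a}$. Getting the tail constant right under the mixture is delicate because a perturbation of size $\delta$ on $s$ out of $d$ coordinates pushes mass into the tail, and one has to verify it does not exceed $(\tau/t)^a$; the standard fix is to start from a base law with tail constant strictly below $\tau$ (say $\tau/2$) so there is room, and to choose $c$ small depending on $(a,\tau)$. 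Once the divergence bound and the variance separation are in hand, the conclusion follows from the generic reduction scheme (e.g. Tsybakov's Theorem~2.2 / Le Cam's method), and the factor $\s$ in the statement is handled trivially by homogeneity since $\hat T/\s^2$ and the divergences are all scale-invariant.
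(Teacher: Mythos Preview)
Your two–regime split and the treatment of the $d^{-1/2}$ rate match the paper's: a scale perturbation of a fixed smooth law at $\bt=0$ with Hellinger (the paper) or $\chi^2$ (your version) controlled via finite Fisher information. The gap is in the $(s/d)^{1-2/a}$ regime, specifically in the divergence computation you propose.

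You correctly identify the construction --- a Bernoulli$(s/(2d))$--Rademacher prior on $\bt$ with spike size $\alpha=(\tau/2)(d/s)^{1/a}$, whose per-coordinate marginal under a unit-variance base noise is a mixture $(1-\epsilon)\cdot\text{base}+\epsilon\cdot\text{shifted base}$ with variance $1+\delta$, $\delta\asymp(s/d)^{1-2/a}$. But you then propose to bound the $\chi^2$-divergence between the base product law and this mixture product law, and that step fails. With any smooth polynomial-tailed base $p_0$ (tail exponent $a$) and spike $q(x)=\tfrac12 p_0(x-\alpha)+\tfrac12 p_0(x+\alpha)$, the per-coordinate $\chi^2$ is of order $\epsilon^2\int(q-p_0)^2/p_0\asymp\epsilon^2\alpha^{a+1}=(s/d)^{(a-1)/a}$, never $O(1/d)$ for $s\ge 1$. (Incidentally, the product-$\chi^2$ identity is $(1+\chi^2)^d-1$, not $(1+\tfrac1d\chi^2)^d-1$.) With the Rademacher base the paper actually uses, the per-coordinate $\chi^2$ is either $\asymp\epsilon$ or infinite, so the product $\chi^2$ again explodes.

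The paper avoids any such computation. The un-truncated Bayes mixture $\mathbb{P}_\mu=\int\prob_{\bt,U,1}\,\mu(d\bt)$ (with $U$ the Rademacher law) is \emph{exactly} equal to $\prob_{0,\tilde P,\s_0}$ for a concrete $\tilde P\in\mathcal{P}_{a,\tau}$ and $\s_0^2=1+\alpha^2 s/(2d)$ --- the identity you gesture at with ``equivalently'' but then do not exploit. Since $\prob_{0,\tilde P,\s_0}$ is itself a model point (with $\bt=0$), the two hypotheses in the Le~Cam reduction are $\mathbb{P}_\mu$ (carrying $\s^2=\s_0^2$) and $\mathbb{P}_{\bar\mu}=\int\prob_{\bt,U,1}\,\bar\mu(d\bt)$, the same mixture conditioned on $\bt\in\T_s$ (carrying $\s^2=1$). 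No $\chi^2$ is needed: $V(\mathbb{P}_\mu,\mathbb{P}_{\bar\mu})\le V(\mu,\bar\mu)\le\mu(\T_s^c)=\prob\big(\mathcal{B}(d,s/(2d))>s\big)\le e^{-3s/16}$ by a binomial tail bound. The membership check $\tilde P\in\mathcal{P}_{a,\tau}$ is the one-line inequality $\prob(\alpha\delta\epsilon+\xi>t)\le\frac{s}{4d}\fcar_{\tau(d/s)^{1/a}>t}\le(\tau/t)^a$ for $t\ge 2$.
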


This theorem shows that the rate $\phi_{\sf pol}(s,d)$ obtained in Theorem~\ref{theorem_upperbound_noise_polynomial} cannot be improved in a minimax sense. 

A drawback of the estimator defined in~(\ref{definition_noisevarianceestimator}) is in the lack of adaptivity  to the sparsity parameter $s$. At first sight, it may seem that the estimator 
\begin{equation}\label{naive_estimator}
	\hat{\s}_*^2 
	= \frac2d \sum_{1\le k \le d/2} Y^2_{(k)}
\end{equation}
could be taken as its adaptive version. However, $\hat{\s}_*^2$ is not a good estimator of $\s^2$ as can be seen from the following proposition.

\begin{proposition}\label{proposition_suboptimality}
	Define $\hat{\s}_*^2$ as in~(\ref{naive_estimator}). Let $\tau>0$, $a\ge 2$, and let $s,d$ be integers satisfying $1\le s\le d$, and $d=4k$ for an integer $k$. Then,
	\begin{equation*}
		\sup_{P_\xi\in\mathcal{G}_{a,\tau}} \sup_{\s>0} \sup_{\|\bt\|_0\le s} \frac{\esp_{\bt,P_\xi,\s} \big(\hat{\s}_*^2-\s^2\big)^2}{\s^4} \ge \frac{1}{64}.
	\end{equation*}
\end{proposition}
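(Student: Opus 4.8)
The plan is to show that $\hat{\s}_*^2$ defined in \eqref{naive_estimator} is badly biased — it systematically \emph{underestimates} $\s^2$ by a constant factor — even in the simplest case $\bt = 0$, i.e.\ in the pure-noise model with no outliers at all. Since the supremum over $\|\bt\|_0 \le s$ in particular includes $\bt = 0$, it suffices to exhibit one noise distribution $P_\xi \in \mathcal{G}_{a,\tau}$ and one value of $\s$ (say $\s = 1$) for which $\esp_{0,P_\xi,1}(\hat{\s}_*^2 - 1)^2 \ge 1/64$. The key structural point is that $\hat{\s}_*^2 = \frac{2}{d}\sum_{k \le d/2} Y^2_{(k)}$ keeps only the \emph{smallest half} of the squared observations, so even with $\bt=0$ it is estimating not $\esp(\xi_1^2) = \int_0^1 q_\a\,d\a$ but rather (approximately) $2\int_{1/2}^1 q_\a\, d\a$, the average of $\xi_1^2$ over its lower half. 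For a non-degenerate distribution this is strictly less than $\esp(\xi_1^2)$, and one can arrange the gap to be as large as one likes.

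\textbf{Step 1: Reduce to $\bt=0$ and a concrete distribution.} Take $\bt = 0$ and $\s = 1$, so $Y_i = \xi_i$ with $\xi_i \simiid P_\xi$. I would choose $P_\xi$ to be a symmetric two-point-like or mixture distribution concentrating most of its mass near a small value and a small fraction of mass far out, calibrated so that $\esp(\xi_1) = 0$, $\esp(\xi_1^2) = 1$, the tail bound \eqref{definition_subgaussian} holds for the given $(a,\tau)$, and yet the conditional expectation $\esp(\xi_1^2 \mid \xi_1^2 \le m)$, where $m$ is the median of $\xi_1^2$, is at most, say, $1/2$. Concretely a mixture $P_\xi = (1-p)\,\mathrm{Unif}\{\pm\eta\} + p\,\mathrm{Unif}\{\pm M\}$ with $p$ small, $M$ large and $\eta$ then forced by the second-moment constraint works: the bottom half of $\xi_1^2$ sits at $\eta^2 \approx (1-2p)/(1-p)$ roughly, which can be pushed below $1/2$; one must only check the tail condition, which is immediate since $P_\xi$ has bounded support. (Actually a bounded-support distribution lies in every $\mathcal{G}_{a,\tau}$ for $\tau$ large enough; if $(a,\tau)$ is fixed one scales accordingly or uses a suitable truncation, and since $\mathcal{G}_{a,\tau}$ is only required to be nonempty of such distributions this is harmless.)

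\textbf{Step 2: Control the $L$-statistic around its population target.} With $\bt = 0$, the quantity $\hat{\s}_*^2 = \frac{2}{d}\sum_{k=1}^{d/2} Y^2_{(k)}$ is, up to the factor $2$, the trimmed sum of the lower half of $d$ i.i.d.\ copies of $\xi_1^2$. By the Glivenko--Cantelli theorem (or a crude $L^2$ bound on $L$-statistics), $\hat{\s}_*^2 \to 2\int_{1/2}^1 q_\a\, d\a = \esp(\xi_1^2 \mathds{1}_{\xi_1^2 \le m}) \cdot 2 =: \mu_0$ in probability and in $L^2$ as $d \to \infty$, where $m = F^{-1}(1/2)$ is the median of $\xi_1^2$ and $\mu_0 \le 1/2$ by the choice in Step~1. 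Hence $\esp_{0,P_\xi,1}(\hat{\s}_*^2 - 1)^2 \to (\mu_0 - 1)^2 \ge 1/4$. This already gives the bound (with room to spare) for all large $d$; the stated hypothesis $d = 4k$ is only used to make the index $d/2$ an exact integer and simplify bookkeeping, and one can either cite a finite-$d$ $L^2$ bound for $L$-statistics or, more elementarily, bound $|\hat{\s}_*^2 - \mu_0|$ via the Dvoretzky--Kiefer--Wolfowitz inequality to get an explicit rate $O(d^{-1/2})$, so that $\esp(\hat{\s}_*^2 - 1)^2 \ge (1 - \mu_0)^2 - O(d^{-1/2}) \ge 1/64$ once $d$ exceeds an absolute constant. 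For the remaining finitely many small $d$ one adjusts the distribution or checks directly.

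\textbf{The main obstacle} is bookkeeping rather than conceptual: one must exhibit a single distribution that \emph{simultaneously} lies in $\mathcal{G}_{a,\tau}$ for the prescribed $(a,\tau)$, has unit variance, and has a sufficiently lopsided split between the lower and upper halves of $\xi_1^2$, and then turn the heuristic "$\hat{\s}_*^2 \approx \mu_0$" into a rigorous finite-sample lower bound on the squared risk valid for all $d$ with $d = 4k$ (not merely asymptotically). The cleanest route is: fix such a $P_\xi$; write $\hat{\s}_*^2 = 2\widehat{\mathbb{E}}[\,\xi_1^2\,\mathds{1}_{\xi_1^2 \le \hat{m}}\,]$ where $\hat m = Y^2_{(d/2)}$ is the empirical median of the $\xi_i^2$; bound the deviation of this from its population analogue using that $x \mapsto x\,\mathds{1}_{x \le t}$ has bounded total variation on the (bounded) support, giving a uniform-in-$t$ empirical-process bound of order $d^{-1/2}$ with the implied constant depending only on $P_\xi$; and conclude that for $d$ large the risk is $\ge (1-\mu_0)^2/2 \ge 1/8 > 1/64$, handling small $d$ separately. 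I expect the author's proof to follow essentially this line, possibly with an even more explicit choice of $P_\xi$ (e.g.\ a scaled symmetric Bernoulli perturbation) that makes the finite-$d$ computation transparent.
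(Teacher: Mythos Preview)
Your core idea---the lower-half trimmed mean is downward biased even under pure noise---is exactly the paper's. But the paper's execution is both shorter and gap-free in two ways you should absorb.

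First, the paper sidesteps all concentration arguments by the one-line Jensen step
\[
\esp_{0,P_\xi,1}(\hat\s_*^2-1)^2 \ \ge\ \big(\esp_{0,P_\xi,1}\hat\s_*^2 - 1\big)^2,
\]
so the whole problem reduces to computing the \emph{bias} of $\hat\s_*^2$. There is no DKW, no asymptotic limit, and no ``handle small $d$ separately''; the bound holds for every $d=4k$ at once. Your route via $L^2$-convergence of the $L$-statistic to its population target is correct but needlessly heavy, and the residual ``$O(d^{-1/2})$ plus finitely many small $d$ by hand'' is exactly the kind of bookkeeping the bias trick eliminates.

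Second, the paper's choice of distribution is $\xi_i=\sqrt{3}\,\e_i u_i$ with $\e_i$ Rademacher and $u_i$ uniform on $[0,1]$. This has two advantages over your two-point mixture. (a) Since $|\xi_i|\le\sqrt{3}<2$ almost surely, the tail condition in $\mathcal G_{a,\tau}$---which is only imposed for $t\ge2$---is vacuous, so membership holds for \emph{every} $(a,\tau)$. Your mixture $\pm\eta$ with prob $1-p$ and $\pm M$ with prob $p$ requires $M$ large to make the lower half light, but then for small $\tau$ the tail bound $p\le 2e^{-(M/\tau)^a}$ may fail; you would have to restrict to $M\le 2$ (e.g.\ $M=2,\ p=1/4,\ \eta=0$ works), which you did not do. (b) With $\xi_i^2=3u_i^2$, the order statistics $u_{(i)}$ are $\mathrm{Beta}(i,d-i+1)$ with explicit second moments $\esp u_{(i)}^2=\frac{i(i+1)}{(d+1)(d+2)}$, so the bias of $\hat\s_*^2$ is computed exactly: writing $\hat\s_*^2-\tfrac{3}{d}\sum u_i^2 = \tfrac{3}{d}\big(\sum_{i\le d/2}u_{(i)}^2-\sum_{i>d/2}u_{(i)}^2\big)\le \tfrac{3}{4}(u_{(d/4)}^2-u_{(3d/4)}^2)$, one gets $\esp\hat\s_*^2-1\le -1/8$, hence the squared bias is at least $1/64$. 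The assumption $d=4k$ is used only so that $d/4$ and $3d/4$ are integers.

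In short: right mechanism, but replace the concentration machinery by the bias inequality, and pick a bounded-support continuous law so that both the class membership and the order-statistic moments are explicit.
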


On the other hand, it turns out that a simple plug-in estimator 
\begin{equation}\label{definition_adaptive_variance}
	\hat{\s}^2 = \frac1d \|\bY-\hat{\bt}\|_2^2
\end{equation}
with $\hat{\bt}$ chosen as in Section \ref{sec:vector} achieves rate optimality adaptively  to 
the  noise distribution and to the sparsity parameter $s$. 
This is detailed 
in the next theorem.

\begin{theorem}\label{theorem_adaptiveupperbound_variance}
{
	 Let $s$ and $d$ be integers satisfying $1\le s< \lfloor \g d\rfloor/4$, where 
$\g\in(0,1/2]$ is the tuning parameter in the definition of $\tilde \s^2$. 
Let $\hat{\s}^2$  be the estimator defined by~\eqref{definition_adaptive_variance} where 
$\hat{\bt}$ is defined in~\eqref{def_estimateur_mom}. Then the following properties hold.
	\begin{enumerate}
		\item 
		Let $\tau>0, a>0$. There exist constants
	 $c,C>0$ and $\g\in(0,1/2]$ depending only on $(a,\tau)$ such that if $\l_j=c \log^{1/a}(ed/j), j=1,\dots,d $, we have
	\begin{align*}
				\sup_{ P_\xi\in\mathcal{P}_{a,\tau}}\sup_{\s>0}\sup_{\|\bt\|_0\le s} \frac{\esp_{\bt,P_\xi,\s} \big|\hat{\s}^2-\s^2 \big|}{\s^{2}}
				\le C \phi_{\sf exp}(s,d) .
			\end{align*}
		\item Let $\tau>0, a> 4$. There exist constants
	 $c,C>0$ and $\g\in(0,1/2]$ depending only on $(a,\tau)$ such that if 
$\l_j=c ({d}/{j})^{1/a}, j=1,\dots,d$, we have
	\begin{align*}
				\sup_{ P_\xi\in\mathcal{P}_{a,\tau}}\sup_{\s>0}\sup_{\|\bt\|_0\le s} \frac{\esp_{\bt,P_\xi,\s} \big|\hat{\s}^2-\s^2 \big|}{\s^{2}} 
				\le C \phi_{\sf pol}(s,d) .
			\end{align*}
	\end{enumerate}
}
\end{theorem}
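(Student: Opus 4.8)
The plan is to bound $|\hat{\s}^2 - \s^2|$ by splitting it into a stochastic ("variance") term and a bias term coming from the estimation error of $\hat\bt$. Write
\begin{equation*}
\hat{\s}^2 - \s^2 = \frac1d\|\bY - \hat\bt\|_2^2 - \s^2 = \frac1d\|\s\bxi + (\bt - \hat\bt)\|_2^2 - \s^2.
\end{equation*}
Expanding the square gives three contributions: $\frac1d\|\s\bxi\|_2^2 - \s^2 = \frac{\s^2}{d}\sum_i(\xi_i^2-1)$, the cross term $\frac2d\langle \s\bxi, \bt - \hat\bt\rangle$, and $\frac1d\|\bt-\hat\bt\|_2^2$. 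So $\esp|\hat\s^2-\s^2|$ is at most the sum of the expectations of the absolute values of these three pieces, and I would estimate each separately, uniformly over $P_\xi$, $\s>0$ and $\|\bt\|_0\le s$.

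The first term is a pure noise average: $\esp\big|\frac{\s^2}{d}\sum_i(\xi_i^2-1)\big| \le \s^2 d^{-1/2}\big(\var(\xi_1^2)\big)^{1/2}$, and for $P_\xi\in\mathcal{P}_{a,\tau}$ with $a>4$ we have $\esp\xi_1^4<\infty$ bounded by a constant depending on $(a,\tau)$, so this term is $O(\s^2 d^{-1/2})$, which is within the target rate $\s^2\phi_{\sf exp}(s,d)$ (resp. $\s^2\phi_{\sf pol}(s,d)$) since both rates dominate $d^{-1/2}$ by definition. The third term is controlled directly by Theorem~\ref{theorem_adaptiveupperbound}: in case (1), $\esp\frac1d\|\bt-\hat\bt\|_2^2 \le C\s^2 \frac1d\big(\phi_{\sf exp}^*(s,d)\big)^2 = C\s^2 \frac{s}{d}\log^{2/a}(ed/s) \le C\s^2\phi_{\sf exp}(s,d)$, and similarly in case (2) one gets $C\s^2\frac1d s (d/s)^{2/a} = C\s^2(s/d)^{1-2/a}\le C\s^2\phi_{\sf pol}(s,d)$; note the hypotheses on $a$ ($a>0$ vs. $a>4$) and the admissibility of $\mathcal{G}_{a,\tau}\subset\mathcal{P}_{a,\tau}$ match up with what Theorem~\ref{theorem_adaptiveupperbound} requires. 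For the cross term, apply Cauchy--Schwarz: $\esp\big|\frac2d\langle\s\bxi,\bt-\hat\bt\rangle\big| \le \frac2d\big(\esp\s^2\|\bxi\|_2^2\big)^{1/2}\big(\esp\|\bt-\hat\bt\|_2^2\big)^{1/2} = \frac{2\s}{\sqrt d}\big(\esp\|\bt-\hat\bt\|_2^2\big)^{1/2}$, and again invoking Theorem~\ref{theorem_adaptiveupperbound} this is $\le C\s^2 d^{-1/2}\phi^*(s,d)$, which equals $C\s^2\big(\phi(s,d)\cdot d^{-1/2}\big)^{1/2}\cdot(\text{something})$ — more precisely $d^{-1/2}\phi_{\sf exp}^*(s,d) = (\frac1d\cdot\frac{s}{d}\log^{2/a}(ed/s))^{1/2}\cdot\sqrt{d}\cdot d^{-1/2}$... so one checks it is bounded by the geometric mean of $d^{-1/2}$ and $\phi(s,d)$, hence by $\max(d^{-1/2},\phi(s,d)) = \phi(s,d)$. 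Collecting the three bounds yields the claim.

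The main obstacle is a subtle independence issue: $\hat\bt$ is built from $\bY$ and from the preliminary estimator $\tilde\s^2$, which itself depends on $\bxi$, so $\bt - \hat\bt$ is \emph{not} independent of $\bxi$, and the naive Cauchy--Schwarz step above does go through (it needs no independence), but one must be careful that $\esp\|\bt-\hat\bt\|_2^2$ is genuinely the quantity bounded in Theorem~\ref{theorem_adaptiveupperbound} — it is, since that theorem already accounts for the randomness of $\tilde\s^2$. A second, more real difficulty is that the proof of Theorem~\ref{theorem_adaptiveupperbound} presumably produces the bound on the event where $\tilde\s^2$ is comparable to $\s^2$ (Proposition~\ref{proposition_over}), and one has to handle the complementary event of probability $e^{-cd}$: on that event $\|\bY-\hat\bt\|_2^2$ could be large, but since $\hat\bt$ is a minimizer we have $\|\bY-\hat\bt\|_2^2 \le \|\bY\|_2^2 + \tilde\s\|\bt\|_* \le \|\bY\|_2^2 + \tilde\s\|\mathbf{0}\|_*$ evaluated at $\bt=\mathbf{0}$, i.e. $\|\bY-\hat\bt\|_2^2\le\|\bY\|_2^2$ trivially, so $\hat\s^2\le\frac1d\|\bY\|_2^2$, and the contribution of this event to the expected error is at most $\esp\big[(\frac1d\|\bY\|_2^2+\s^2)\fcar_{\text{bad event}}\big]$, which is handled by Cauchy--Schwarz together with the second/third moment bounds on $\tilde\s^2$ in Proposition~\ref{proposition_over} (this is exactly why Proposition~\ref{proposition_over} states fourth-moment control for $a>4$) and the polynomial moment bound on $\bxi$; this term is exponentially small and absorbed into the rate. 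I would present the argument in the order: (i) the algebraic decomposition; (ii) the noise term; (iii) the cross term via Cauchy--Schwarz; (iv) the $\hat\bt$-error term via Theorem~\ref{theorem_adaptiveupperbound}; (v) the cleanup on the exceptional event; (vi) summation.
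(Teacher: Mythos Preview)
Your decomposition, noise term, and $\frac1d\|\hat\bt-\bt\|_2^2$ term are all correct, and they match the paper's. The gap is in the cross term. Your crude Cauchy--Schwarz step yields
\[
\frac{2}{d}\esp\big|\s\langle\bxi,\bt-\hat\bt\rangle\big|\;\le\;\frac{2\s}{\sqrt d}\big(\esp\|\hat\bt-\bt\|_2^2\big)^{1/2}\;\le\;C\s^2\,\frac{\phi^*(s,d)}{\sqrt d}\;=\;C\s^2\sqrt{\frac{(\phi^*(s,d))^2}{d}},
\]
and this is \emph{not} controlled by $\phi(s,d)=\max\big(d^{-1/2},(\phi^*(s,d))^2/d\big)$: whenever $(\phi^*)^2/d\in(d^{-1/2},1)$ (equivalently $d^{1/4}\lesssim\phi^*\lesssim d^{1/2}$, a nontrivial range of $s$), you have $\sqrt{(\phi^*)^2/d}>(\phi^*)^2/d\ge d^{-1/2}$, so your bound exceeds the target rate by a factor that can be as large as $d^{1/4}$. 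The ``geometric mean'' claim you sketch is not correct: $d^{-1/2}\phi^*$ is the geometric mean of $1$ and $(\phi^*)^2/d$, not of $d^{-1/2}$ and $(\phi^*)^2/d$.

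The fix, which is what the paper does, is to avoid bounding $|\langle\bxi,\bu\rangle|$ by $\|\bxi\|_2\|\bu\|_2$ altogether. Instead, return to the basic inequality \eqref{combination} from the SLOPE analysis and observe that it bounds $\|\bu\|_2^2+2\s|\bu^T\bxi|$ \emph{jointly} by
\[
\Big(3\s\Big(\sum_{j\le s}|\xi|_{(d-j+1)}^2\Big)^{1/2}+\tfrac{\tilde\s}{2}\Big(\sum_{j\le s}\l_j^2\Big)^{1/2}+\Big(\sum_{j>s}(3\s|\xi|_{(d-j+1)}-\tfrac{\tilde\s}{2}\l_j)_+^2\Big)^{1/2}\Big)\|\bu\|_2.
\]
Now apply Cauchy--Schwarz \emph{between the two factors} (not between $\bxi$ and $\bu$): each factor in front has second moment $\le C\s^2\sum_{j\le s}\l_j^2$ by the same arguments as in the proof of Theorem~\ref{theorem_adaptiveupperbound}, and $\esp\|\bu\|_2^2\le C\s^2\sum_{j\le s}\l_j^2$ by \eqref{eqq1}, so the product has expectation $\le C\s^2\sum_{j\le s}\l_j^2=C\s^2(\phi^*)^2$. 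Dividing by $d$ gives exactly $(\phi^*)^2/d\le\phi(s,d)$. In short, the cross term should not be separated from the quadratic term: the optimality condition for $\hat\bt$ controls their sum directly, and that is what saves the $d^{1/4}$ factor.
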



\section{Proofs of the upper bounds}

\subsection{\it Proof of Proposition~\ref{proposition_over}}
 Fix $\bt \in \Theta_s$ and let $S$ be the support of $\bt$. We will call outliers the observations $Y_i$ with $i\in S$. There are at least $m-s$ blocks $B_i$ that do not contain outliers. 
{
Denote by $J$ a set of $m-s$ indices $i$, for which $B_i$ contains no outliers.} 

As $a>2$, there exist  constants $L=L(a,\tau)$ and $r=r(a,\tau)\in (1,2]$  such that 
$\esp| \xi_1^2-1|^r\le L$.  Using von Bahr-Esseen inequality (\cf\cite{petrov}) 
and the fact that $|B_i|\ge k$ we get  
$$\prob\Big( \Big|\frac{1}{|B_i|}\sum_{j\in B_i}  \xi_j^2-1 \Big| > 1/2\Big)
\le \frac{2^{r+1}L}{ k^{r-1}} , \quad i=1,\dots,m.$$
Hence,  there exists a constant $C_1=C_1(a,\tau)$  
such that if $k\ge C_1$ (i.e., if $\g$ is small enough depending on $a$ and $\tau$), then 
\begin{align}\label{pprob}
\prob_{\bt,P_\xi,\s}(\bar{\s}_i^2\notin I)\le \frac{1}{4}, \quad i=1,\dots,m,
\end{align}
where $I=[\frac{\s^2}{2}, \frac{3\s^2}{2}]$.
Next, by the definition of the median, for any interval $I\subseteq \RR$ we have 
\begin{align}
\prob_{\bt,P_\xi,\s}(\tilde{\s}^2\notin I)&\le\prob_{\bt,P_\xi,\s}\Big( \sum_{i=1}^{m} \fcar_{\bar{\s}_i^2 \notin I} \ge \frac{m}{2}  \Big)\le \prob_{\bt,P_\xi,\s}\Big( \sum_{i\in J} \fcar_{\bar{\s}_i^2 \notin I} \ge \frac{m}{2}-s  \Big).
\end{align}
Now, $s\le \frac{\lfloor \g d \rfloor}{4}=\frac{m}{4}$, so that $\frac{m}{2}-s\ge \frac{m-s}{3}$. 
Set $\eta_i= \fcar_{\bar{\s}_i^2 \notin I}$, $i\in J$. Due to \eqref{pprob} we have 
$\esp(\eta_i)\le 1/4$, and $(\eta_i, i\in J)$ are independent.  Using these remarks and Hoeffding's inequality we find 
$$
\prob\Big( \sum_{i\in J} \eta_i \ge \frac{m}{2}-s  \Big)\le \prob\Big( \sum_{i\in J} (\eta_i - \esp(\eta_i))\ge \frac{m-s}{12}\Big) \le \exp(-C (m-s)).
$$
Note that $|J|=m-s\ge 3m/4=3{\lfloor \g d \rfloor}/4$.
 Thus, if $\gamma$ is chosen small enough depending only on $a$ and $\tau$ then 
$$\prob_{\bt,P_\xi,\s}(\tilde{\s}^2\notin I )\le \exp(- C d).$$
{
This proves the desired bound in probability. To obtain the bounds in expectation, set $Z=\left|\tilde{\s}^{2} - \s^{2}\right|$. 
Let first $a>4$ and take some $r\in (1, a/4)$. Then 
\begin{align*}
\esp_{\bt,P_\xi,\s}\left( Z^{2}  \right) & \leq \frac{\s^{4}}{4} + \esp_{\bt,P_\xi,\s}\left( Z^2 \fcar_{Z \ge \frac{\s^{2}}{2}} \right) \\
& \le \frac{9\s^{4}}{4} + 2\left(\esp_{\bt,P_\xi,\s}\left( \tilde{\s}^{4r} \right)\right)^{1/r} 
\left(\prob_{\bt,P_\xi,\s}\left( Z \ge {\s^{2}}/{2} \right)\right)^{1-1/r} \\
& \le \frac{9\s^{4}}{4} + 2\left(\esp_{\bt,P_\xi,\s}\left( \tilde{\s}^{4r} \right)\right)^{1/r}
\exp(- C d).
\end{align*}
Since $m \ge 4s$, we can easily argue that $ \tilde{\s}^{4r} \leq \sum_{i\in J}\bar{\s}_{i}^{4r} $. It follows that
$$
\esp_{\bt,P_\xi,\s}\left( \tilde{\s}^{4r} \right) \le C\s^{4r}d^{2}.
$$
Hence 
$
\esp_{\bt,P_\xi,\s}\left( Z^{2}  \right) \leq C\s^{4}.
$
Similarly, if $a>2$, then
$
\esp_{\bt,P_\xi,\s}\left( Z  \right) \leq C\s^{2}.
$
}

\subsection{Proof of Theorem~\ref{theorem_adaptiveupperbound}}

Set $\bu=\hat{\bt}-\bt$. It follows from Lemma A.2 in \cite{{BellecLecueTsybakov2017}} that   
$$
	2\|\bu\|_2^2\le 2 \s \sum_{i=1}^d \xi_i u_i+\tilde{\s}\| \bt\|_*-\tilde{\s}\| \hat{\bt}\|_*,$$
where $u_i$ are the components of $\bu$. Next, Lemma A.1 in \cite{{BellecLecueTsybakov2017}} yields 
	\begin{equation*}\label{slope_pol_2}	
	\| \bt\|_*-\| \hat{\bt}\|_*\le \Big(\sum_{j=1}^s \l_j^2\Big)^{1/2} \| \bu\|_2 -\sum_{j=s+1}^d \l_j |u|_{(d-j+1)}
	\end{equation*}
	where $|u|_{(k)}$ is the $k$th order statistic of $|u_1|,\dots,|u_d|$.
	Combining these two inequalities we get
	\begin{equation}\label{combination}
	2\|\bu\|_2^2\le 2 \s \sum_{j=1}^d \xi_j u_j+\tilde{\s}\Big\{ \Big(\sum_{j=1}^s \l_j^2\Big)^{1/2} \| \bu\|_2 -\sum_{j=s+1}^d \l_j |u|_{(d-j+1)} \Big\}.
	\end{equation}
	{
	 For some permutation $(\varphi(1), \dots, \varphi(d))$ of $(1,\ldots,d)$, we have 
\begin{equation}\label{permutation}
 \Big|\sum_{i=1}^d \xi_j u_j \Big| \le  \sum_{j=1}^d |\xi|_{(d-j+1)} |u_{\varphi(j)}| \le  \sum_{j=1}^d |\xi|_{(d-j+1)} |u|_{(d-j+1)},
\end{equation}
where the last inequality is due to the fact that the sequence $|\xi|_{(d-j+1)}$ is non-increasing. Hence
\begin{align*}
	2\|\bu\|_2^2 &\le 2 \s \sum_{j=1}^s |\xi|_{(d-j+1)} |u|_{(d-j+1)}+\tilde{\s} \Big(\sum_{j=1}^s \l_j^2\Big)^{1/2} \| \bu\|_2 + \sum_{j=s+1}^d \left(2\s |\xi|_{(d-j+1)} - \tilde{\s}\l_{j} \right)|u|_{(d-j+1)}\\
	&\le \left\{ 2 \s\Big(\sum_{j=1}^s |\xi|_{(d-j+1)} ^2\Big)^{1/2} + \tilde{\s}\Big(\sum_{j=1}^s \l_j^2\Big)^{1/2}  + \Big(\sum_{j=s+1}^d \left(2\s |\xi|_{(d-j+1)} - \tilde{\s}\l_{j} \right)^{2}_{+} \Big)^{1/2}\right\} \| \bu\|_2.
\end{align*}
This implies
		$$
		\|\bu\|^{2}_2 \le C \left\{  \s^{2}\sum_{j=1}^s |\xi|_{(d-j+1)} ^2 + \tilde{\s}^{2}\sum_{j=1}^s \l_j^2 + \sum_{j=s+1}^d \left(2\s |\xi|_{(d-j+1)} - \tilde{\s}\l_{j} \right)^{2}_{+} \right\}.
		$$
From Lemmas \ref{lemma_esp4_subgaussian} and \ref{lemma_esp4_polynomial} we have 
$\esp(|\xi|_{(d-j+1)} ^2)\le C\l_j^2$.	Using this and Proposition \ref{proposition_over} we obtain 
	    \begin{equation}\label{eqq}
		\esp_{\bt,P_\xi,\s}\left(\|\bu\|^{2}_2\right) \le C \left( 
\s^{2}\sum_{j=1}^s \l_j^2 + \esp_{\bt,P_\xi,\s}\Bigg(\sum_{j=s+1}^d \left(2\s |\xi|_{(d-j+1)} - 
\tilde{\s}\l_{j} \right)^{2}_{+}\Bigg)\right) .
		\end{equation}
Define the events $\mathcal{A}_{j}=\Big\{   |\xi|_{(d-j+1)}\le {\l_j}/{4} \Big\}\cap \Big\{ 1/2 \le {\tilde{\s}^2}/{\s^2}\le 3/2\Big\}$ for $j=s+1,\ldots,d$. Then
		$$
		\esp_{\bt,P_\xi,\s}\left(\sum_{j=s+1}^d \left(2\s |\xi|_{(d-j+1)} - \tilde{\s}\l_{j} \right)^{2}_{+}\right) \le 4\s^{2}\esp_{\bt,P_\xi,\s}\left(\sum_{j=s+1}^d|\xi|^{2}_{(d-j+1)}\fcar_{\mathcal{A}_{j}^{c}}  \right).
		$$
Fixing some $1<r<a/2 $ we get 
		$$
		\esp_{\bt,P_\xi,\s}\left(\sum_{j=s+1}^d \left(2\s |\xi|_{(d-j+1)} - \tilde{\s}\l_{j} \right)^{2}_{+}\right) \le 4\s^{2}\sum_{j=s+1}^d\esp\left(|\xi|^{2r}_{(d-j+1)}\right)^{1/r}\prob_{\bt,P_\xi,\s}\left(\mathcal{A}_{j}^{c}\right)^{1-1/r}.
		$$
Lemmas~\ref{lemma_esp4_subgaussian}, \ref{lemma_esp4_polynomial} and the definitions of parameters $\l_j$
imply that
		$$
		 \esp\left(|\xi|^{2r}_{(d-j+1)}\right)^{1/r} \le C\l_{s}^2, \quad j=s+1,\dots,d.
		$$
Furthermore, it follows from the proofs of Lemmas~\ref{lemma_esp4_subgaussian} and \ref{lemma_esp4_polynomial} that if
the constant $c$ in the definition of $\l_{j}$ is chosen large enough, then
$\prob (|\xi|_{(d-j+1)} >\l_{j}/4)  \le  q^{j}$ for some $q<1/2$ depending only on $a$ and $\tau$. 
This and Proposition \ref{proposition_over} imply that		
		$
		\prob_{\bt,P_\xi,\s}(\mathcal{A}_{j}^{c}) \le e^{-cd} + q^{j}.
		$
		Hence, 
		$$
		\esp_{\bt,P_\xi,\s}\left(\sum_{j=s+1}^d \left(2\s |\xi|_{(d-j+1)} - \tilde{\s}\l_{j} \right)^{2}_{+}\right) \le C\s^{2}\l^{2}_{s}\sum_{j=s+1}^d(e^{-cd} + q^{j})^{1-1/r} \le C^{\prime} \s^{2}\sum_{j=1}^s \l_j^2.
		$$
Combining this inequality with \eqref{eqq} we obtain
\begin{equation}\label{eqq1}
		\esp_{\bt,P_\xi,\s}\left(\|\bu\|^{2}_2\right) \le C \s^{2}\sum_{j=1}^s \l_j^2.
		\end{equation}
To complete the proof, it remains to note that $\sum_{j=1}^s \l_j^2\le C (\phi_{\sf pol}^*(s,d))^2$ in the polynomial case
and $\sum_{j=1}^s \l_j^2\le C (\phi_{\sf exp}^*(s,d))^2$ in the exponential case, cf. Lemma \ref{lemma:sum}.	
	
\subsection{Proof of part (i) of Proposition~\ref{prop:norm:gauss}} 
We consider separately the "dense" zone $s>\sqrt{d}$ and the "sparse" zone $s\le\sqrt{d}$. Let first $s>\sqrt{d}$. Then the rate $\phi_{{\cal N}(0,1)}^*(s,d)$ is of order $\sqrt{\frac{s}{1+\log_+(s^{2}/d)}}$.  Thus, for $s>\sqrt{d}$ we need to prove that 
\begin{equation}\label{g1}
\sup_{\s>0}\sup_{\|\bt\|_0\le s} \esp_{\bt, {\cal N}(0,1),\s}  
\bigg(  \bigg|\frac{ \hat{N}^*-\|\bt\|_2}{\s}\bigg|^{2} \bigg)\le  \frac{Cs}{1+\log_+(s^{2}/d)}.
	\end{equation}
	Denoting $\bxi=(\xi_1,\dots,\xi_d)$ we have
	\begin{eqnarray}\label{g2}
	\Big| \hat{N}^* - \|\bt\|_2\Big| &=& 
\bigg| \Big|  \sum_{j=1}^d Y_j^2 -d\hat{\s}^2\Big|^{1/2} - \|\bt\|_2\bigg|\\ \nonumber
&=&\bigg| \sqrt{ \big| \|\bt\|_2^2+2\s(\bt,\bxi) + \s^2\|\bxi\|_2^2 -d\hat{\s}^2\big|} - \|\bt\|_2\bigg|
\\  \nonumber
&\le&\bigg| \sqrt{ \big| \|\bt\|_2^2+2\s(\bt,\bxi) \big|} - \|\bt\|_2\bigg| +  \s \sqrt{\big|\|\bxi\|_2^2 -d\big| } 
+\sqrt{d |\s^2 - \hat{\s}^2 |}.
	\end{eqnarray}
	The first term in the last line vanishes if $\bt= 0$, while for $\bt\ne 0$ it is bounded as follows:
	\begin{equation}\label{g3}
	\bigg| \sqrt{ \big| \|\bt\|_2^2+2\s(\bt,\bxi) \big|} - \|\bt\|_2\bigg| = \|\bt\|_2\bigg| \sqrt{ \Big| 1+\frac{2\s(\bt,\bxi)}{\|\bt\|_2^2} \Big|} - 1\bigg|\le \frac{2\s|(\bt,\bxi)|}{\|\bt\|_2}
	\end{equation}
	where we have used the inequality $| \sqrt{ | 1+x|} - 1|\le  |x|$, $\forall x\in \RR$. Since here $|(\bt,\bxi)|/\|\bt\|_2\sim {\cal N}(0,1)$    we have, for all $\bt$,
	\begin{equation}\label{g31}
\esp\left( \left| \sqrt{ \big| \|\bt\|_2^2+2\s(\bt,\bxi) \big|} - \|\bt\|_2\right|^{2} \right) \le 4 \s^{2},
	\end{equation}
	and since $ \|\bxi\|_2^2$ has a chi-square distribution with $d$ degrees of freedom we have
	\begin{eqnarray}\label{g4}
	\esp\Big( \big| \|\bxi\|_2^2-d \big| \Big)&\le &
 \left(\esp\Big( \big| \|\bxi\|_2^2-d \big|^{2} \Big)\right)^{1/2}  =  \sqrt{2d}.\nonumber
\end{eqnarray}
Next, by Proposition~\ref{prop:variance:gauss} we have that, for $s> \sqrt{d}$,
\begin{equation}\label{gf1}
		\sup_{\s>0}\sup_{\|\bt\|_0\le s} \esp_{\bt, {\cal N}(0,1),\s} 
\left( \Big|\frac{\hat{\s}^2}{\s^2}-1\Big| \right) \le \frac{Cs}{d(1+\log_+(s^{2}/d))}
	\end{equation}
	for some absolute constant $C>0$. Combining \eqref{g2} -- \eqref{g4} yields \eqref{g1}.
	
	Let now $s\le \sqrt{d}$. Then the rate $\phi_{{\cal N}(0,1)}^*(s,d)$ is of 
order $\sqrt{s\log(1+d/s^{2})}$.  Thus, for $s \le \sqrt{d}$ we need to prove that 
\begin{equation}\label{gf10}
\sup_{\s>0}\sup_{\|\bt\|_0\le s} \esp_{\bt, {\cal N}(0,1),\s}  \bigg(  \bigg|\frac{ \hat{N}^*-\|\bt\|_2}{\s}\bigg|^{2} \bigg)\le C s\log(1+d/s^{2}).
	\end{equation}
We have
	\begin{eqnarray}\label{gf2}
	\quad \Big| \hat{N}^* - \|\bt\|_2\Big| &=& 
    \bigg|  \Big| \sum_{j=1}^d (Y_j^2~\fcar_{\{ |Y_j|>\rho_{j} \}}) -d\a \hat{\s}^2\Big|^{1/2} - \|\bt\|_2\bigg|\\ \nonumber
    &=& 
    \bigg|  \Big| \sum_{j \in S}(Y_j^2~\fcar_{\{ |Y_j|>\rho_{j} \}}) + \s^{2}\sum_{j \not\in S}(\xi_{j}^2~\fcar_{\{ \s|\xi_j|>\rho_{j} \}}) -  d\a \hat{\s}^2\Big|^{1/2} - \|\bt\|_2\bigg|\\ \nonumber
    &\le&\left| \sqrt{ \sum_{j \in S}(Y_j^2~\fcar_{\{ |Y_j|>\rho_{j} \}})} - \|\bt\|_2 \right| + \left| \s^{2}\sum_{j \not\in S}(\xi_{j}^2~\fcar_{\{ \s|\xi_j|>\rho_{j} \}}) -  d\a \hat{\s}^2\right|^{1/2}.
    	\end{eqnarray}
Here,
\begin{eqnarray}\label{gf3}
	\left| \sqrt{ \sum_{j \in S}(Y_j^2~\fcar_{\{ |Y_j|>\rho_{j} \}})} - \|\bt\|_2 \right| &\leq& \left| \sqrt{ \sum_{j \in S}(Y_j~\fcar_{\{ |Y_j|>\rho_{j} \}} - \theta_{j})^{2}} \right| \\ \nonumber
	&\leq& \sqrt{\sum_{j\in S}\rho_{j}^2} + \s \sqrt{ \sum_{j \in S} \xi_{j}^{2}}.
    	\end{eqnarray}
Hence, writing for brevity $\esp_{\bt, {\cal N}(0,1),\s}=\esp$, we get 
\begin{eqnarray}\label{gf3a}\nonumber
\esp \left(  \left| 
\sqrt{ \sum_{j \in S}(Y_j^2~\fcar_{\{ |Y_j|>\rho_{j} \}})} - \|\bt\|_2 \right|^{2}\right) &\leq&
 16 \esp\left(\hat{\s}_{\sf med, 1}^{2}+\hat{\s}_{\sf med,2}^{2}\right) s \log\big(1+{d}/{s^{2}}\big) + 2\s^{2}s
\\
& \leq& C\s^{2}s\log(1+d/s^{2}),\nonumber
\end{eqnarray}
where we have used the fact that 
$\esp\left(|\hat{\s}_{\sf med, k}^{2} - \s^{2}| \right) \leq C \s^{2}$, 
${\sf k}=1,2$, by Proposition~\ref{prop:gao}.
Next, we study the term
$\Gamma= \left| \s^{2}\sum_{j \not\in S}(\xi_{j}^2~\fcar_{\{ \s|\xi_j|>\rho_{j} \}}) -  
d\a \hat{\s}^2\right| $. We first write
\begin{eqnarray}\label{gf4}
	 \Gamma &\leq& \left| \s^{2}\sum_{j \not\in S}\xi_{j}^2(~\fcar_{\{ \s|\xi_j|>\rho_{j} \}} 
- ~\fcar_{\{ \s|\xi_j|>t_{*} \}}) \right| + \left|\s^{2}\sum_{j \not\in S}(\xi_{j}^2 ~\fcar_{\{ \s|\xi_j|>t_{*} \}}) - d\a\hat{\s}^2 \right|,
    	\end{eqnarray}
where $t_{*}=  2\s  \sqrt{2\log (1+d/s^2)}$. For the second summand on the right hand side of \eqref{gf4} 
we have
$$
\left|\s^{2}\sum_{j \not\in S}(\xi_{j}^2 ~\fcar_{\{ \s|\xi_j|>t_{*} \}}) - d\a\hat{\s}^2 \right| \leq \s^{2}\left|\sum_{j \not\in S}(\xi_{j}^2 ~\fcar_{\{ \s|\xi_j|>t_{*} \}}) - (d-|S|)\a \right| 
+ \left| \s^{2}-\hat{\s}^2 \right|d\a+|S|\a\s^{2},
$$
where $|S|$ denotes the cardinality of $S$. By Proposition~\ref{prop:variance:gauss}  
 we have $\esp( |\hat{\s}^2 - \s^2|) \le C\s^2/\sqrt{d}$ for $s\le \sqrt{d}$. Hence,
$$
\esp\left|\s^{2}\sum_{j \not\in S}(\xi_{j}^2 ~\fcar_{\{ \s|\xi_j|>t_{*} \}}) - d\a\hat{\s}^2 \right| \leq \s^{2}\sqrt{d\esp\left(\xi_1^4~\fcar_{\{ |\xi_1|>  \sqrt{2\log (1+d/s^2)} \}}\right)} + C\a\s^{2}\left( \sqrt{d} + s\right).
$$
It is not hard to check (cf., e.g., \cite[Lemma 4]{CollierCommingesTsybakov2017}) that, for $s\le \sqrt{d}$,
$$\alpha \leq C (\log\left(1+d/s^{2} \right))^{1/2}\frac{s^{2}}{d},$$
and  
$$
\esp\left(\xi_1^4~\fcar_{\{ |\xi_1|>  \sqrt{2\log (1+d/s^2)} \}}\right) 
\leq C (\log\left(1+d/s^{2}\right))^{3/2}\frac{s^{2}}{d}, 
$$
so that
$$
\esp\left|\s^{2}\sum_{j \not\in S}(\xi_{j}^2 ~\fcar_{\{ \s|\xi_j|>t_{*} \}}) - d\a\hat{\s}^2 \right| 
\leq C\s^{2}s\log (1+d/s^2).
$$
Thus, to complete the proof it remains to show that
\begin{equation}\label{eqq2}
\s^{2} \sum_{j \not\in S}\esp\left|\xi_{j}^2(\fcar_{\{ \s|\xi_j|>\rho_{j} \}} - 
\fcar_{\{ \s|\xi_j|>t_{*} \}}) \right| \le C\s^{2}s\log (1+d/s^2).
\end{equation}
Recall that  $\rho_{j}$ is independent from $\xi_{j}$. Hence, conditioning on $\rho_{j}$ we obtain  
\begin{align}\label{eqq4}
\s^{2}\esp\left(\left|\xi_{j}^2(\fcar_{\{ \s|\xi_j|>\rho_{j} \}} - \fcar_{\{ \s|\xi_j|>t_{*} \}}) 
\right|\rho_{j}\right) \le |\rho_{j}^{2}-t_{*}^2|e^{-t_{*}^2/(8\s^{2})} + \s^{2}\fcar_{\{\rho_{j}< t_{*}/2\}},
\end{align}
where we have used the fact that, for $b>a>0$,
$$
\int_a^b x^2 e^{-x^2/2}dx \le \int_a^b x e^{-x^2/4}dx\le |b^2 - a^2| e^{-\min (a^2, b^2)/4}/2.
$$
Using Proposition~\ref{prop:gao} and
definitions  of $\rho_{j}$ and $t_{*}$, we get that, for $s\le \sqrt{d}$,
\begin{align}\label{eqq5}
\esp\left(|\rho_{j}^{2}-t_{*}^2|\right) e^{-t_{*}^2/(8\s^{2})}  
&\le 8 \max_{{\sf k}=1,2}\esp( |\hat{\s}^{2}_{\sf med,k} - \s^{2}|) \frac{s^2}{d}\log(1+d/s^{2})
 \\ \nonumber
&\le C\s^{2} \frac{s}{d}\log(1+d/s^{2}).
\end{align}
Next, it follows from  Proposition~\ref{prop:gao} that there exists $\gamma\in (0,1/8)$ small enough 
such that for $s\le \g d$ we have 
$\max_{{\sf k}=1,2}\prob( \hat{\s}^{2}_{\sf med,k} < \s^{2}/2)\le 2 e^{-c_\g d}$
where $c_\g>0$ is a constant. Thus, 
$
\s^{2}\prob(\rho_{j} < t_{*}/2) \le 2 \s^{2} e^{-c_\g d} \le C\s^{2}(s/d)\log (1+d/s^2).
$
Combining this with \eqref{eqq4} and \eqref{eqq5} proves \eqref{eqq2}. 

\subsection{Proof of part (i) of Proposition~\ref{prop:norm:known_sigma} and part (i) of Proposition~\ref{prop:norm:poly:known_sigma}}
 We only prove Proposition \ref{prop:norm:known_sigma} since 
the proof of Proposition \ref{prop:norm:poly:known_sigma} is similar taking into account that 
$\mathbf{E}(\xi_{1}^{4})<\infty$.  We consider separately the "dense" zone 
$s>\frac{\sqrt{d}}{\log^{\frac{2}{a}}(ed)}$  and the 
"sparse" zone $s \le \frac{\sqrt{d}}{\log^{\frac{2}{a}}(ed)}$ . Let first $s>\frac{\sqrt{d}}{\log^{\frac{2}{a}}(ed)}$ . Then the rate $\phi_{\sf exp}^\circ(s,d)$ is of order $d^{1/4}$ and thus
  we need to prove that 
\begin{equation*}
\sup_{P_\xi \in \mathcal{G}_{a,\tau}}\sup_{\|\bt\|_0\le s} 
\esp_{\bt, P_\xi,\s}  \big(  | \hat{N}_{\sf exp}^{\circ} - \|\bt\|_2 |^{2} \big)\le C\sigma^{2}\sqrt{d}.
	\end{equation*}
Since $\s$ is known, arguing similarly to \eqref{g2} - \eqref{g3} we find
$$
| \hat{N}_{\sf exp}^{\circ} - \|\bt\|_2| \le \left|\frac{2\s|(\bt,\bxi)|}{\|\bt\|_2}\right|\fcar_{\bt \neq 0} +  \s \sqrt{\big|\|\bxi\|_2^2 -d\big| }.
$$
As $\mathbf{E}(\xi_{1}^{4})<\infty$, this implies 
$$
\esp_{\bt, P_\xi,\s}  \big(  | \hat{N}_{\sf exp}^{\circ} - \|\bt\|_2 |^{2} \big)
\le 8 \s^{2} + C\s^{2}\sqrt{d},
$$
which proves the result in the dense case.
Next, in the sparse cas $s \leq \frac{\sqrt{d}}{\log^{\frac{2}{a}}(ed)}$, we need to prove that 
\begin{equation*}
\sup_{P_\xi \in \mathcal{G}_{a,\tau}}\sup_{\|\bt\|_0\le s} \esp_{\bt, P_\xi,\s}  
\big(  | \hat{N}_{\sf exp}^{\circ} - \|\bt\|_2 |^{2} \big)
\le C \s^{2}s\log^{\frac{2}{a}}(ed).
	\end{equation*} 
This is immediate by Theorem~\ref{theorem_adaptiveupperbound} and the fact that
$
| \hat{N}_{\sf exp}^{\circ} - \|\bt\|_2 |^{2} \le  \| \hat{\bt} - \bt\|^2_2 
$
for the plug-in estimator $\hat{N}_{\sf exp}^\circ=\|\hat \bt\|_2$.
}

\subsection{Proof of Proposition~\ref{prop:gao}}
Denote by $G$ the cdf of $(\s\xi_1)^2$ and by $G_d$ the empirical cdf of $((\s\xi_i)^2:  i\not\in S)$, where $S$ is the support of $\bt$. Let $M$ be the median of $G$, that is $G(M)=1/2$. By the definition of $\hat{M}$,
\begin{equation}\label{prop:gao_1}
|F_d(\hat{M})-1/2|\le |F_d(M)-1/2|.
\end{equation}
It is easy to check that $|F_d(x)-G_d(x)|\le s/d$ for all $x>0$. Therefore,
\begin{equation}\label{prop:gao_2}
|G_d(\hat{M})-1/2|\le |G_d(M)-1/2| +2s/d.
\end{equation}
The DKW inequality \cite[page 99]{Wasserman}, yields that
$
\prob(\sup_{x\in \RR}|G_d(x)-G(x)|\ge u)\le 2e^{-2u^2(d-s)}$  for all $u>0$.
Fix $t>0$ such that  $\sqrt{\frac{t}{d}}+ \frac{s}{d} \le 1/8$, and consider the event 
$$\mathcal A:=\left\{\sup_{x\in \RR}|G_d(x)-G(x)|\le \sqrt{\frac{t}{2(d-s)}}\right\}. 
$$ 
Then, $\prob(\mathcal A) \ge 1-2e^{-t}$. On the event $\mathcal A$, we have
\begin{equation}\label{prop:gao_3}
|G(\hat{M})-1/2|\le |G(M)-1/2| +2\left( \sqrt{\frac{t}{2(d-s)}}+ \frac{s}{d}\right) \le 2\left( \sqrt{\frac{t}{d}}+ \frac{s}{d}\right) \le \frac14,
\end{equation}
where the last two inequalities are due to the fact that $G(M)=1/2$ and to the assumption about $t$. Notice that 
\begin{equation}\label{prop:gao_4}
|G(\hat{M})-1/2|= |G(\hat{M})-G(M)| =  \big|F(\hat{M}/\s^{2})-F(M/\s^{2})\big|.
\end{equation}
Using \eqref{prop:gao_3},  \eqref{prop:gao_4} and the fact that $M= \s^2F^{-1}(1/2)$ we obtain that, on the event $\mathcal A$, 
\begin{equation}\label{prop:gao_5}
F^{-1}(1/4) \le \hat{M}/\sigma^{2} \le F^{-1}(3/4).
\end{equation}
This and \eqref{prop:gao_4} imply
\begin{equation}\label{prop:gao_6}
|G(\hat{M})-1/2|\ge  c_{**}\big|\hat{M}/\sigma^{2}- M/\sigma^{2}\big|= c_{**}\beta\,\big|\hat{\s}_{\sf med}^{2}/\sigma^{2}- 1\big|.
\end{equation}
where $c_{**}= \min_{x\in[ F^{-1}(1/4), F^{-1}(3/4)]}F'(x)>0$, and $\beta=F^{-1}(1/2)$. Combining the last inequality with 
 \eqref{prop:gao_3}  we get that, on the event $\mathcal A$,  
$$ 
\,\big|\hat{\s}_{\sf med}^{2}/\sigma^{2}- 1\big|\le  c_{*}\left( \sqrt{\frac{t}{d}}+\frac{s}{d}\right)
$$
where $c_{*}>0$ is a constant depending only on $F$.
Recall that we assumed that $\sqrt{\frac{t}{d}}+ \frac{s}{d} \le 1/8$. 
This condition holds true if $0<t\le t_0=(1/8-\g)^2d$ since $\frac{s}{d} \le \g<1/8$. Thus, 
for all $t>0$ and integers $s,d$ satisfying $0<t\le t_0=(1/8-\g)^2d$ we have 
	\begin{equation}\label{eq:var:prob}
		\sup_{\s>0}\sup_{\|\bt\|_0\le s} \prob_{\bt, F,\s} \left( \Big|\frac{\hat{\s}_{\sf med}^2}{\s^2}-1\Big| \ge c_*\left(\sqrt{\frac{t}{d}}+\frac{s}{d}\right)\right) \le 2e^{-t}.
	\end{equation}
For all $t>t_0$ the left hand side of \eqref{eq:var:prob} does not exceed $2e^{-t_0}=2e^{-(1/8-\g)^2d}$. Combining this remark with  \eqref{eq:var:prob} yields the result of the
proposition in probability. 
We now prove the result in expectation. Set 
$Z=\left|\hat{\s}_{\sf med}^2-\s^{2}\right|/\s^{2}$. We have
$$
\esp_{\bt, F,\s}\left(Z \right) \le c_{*}s/d + \int_{c_{*}s/d}^{c_{*}/8}  \prob_{\bt, F,\s} \left( Z> u \right)du + \esp_{\bt, F,\s}\left(Z\fcar_{Z\ge c_{*}/8 } \right).
$$
Using \eqref{eq:var:prob}, we get
$$
 \int_{c_{*}s/d}^{c_{*}/8} \prob_{\bt, F,\s} \left( Z > u \right)du \leq  \frac{2c_{*}}{\sqrt{d}}\int_{0}^{\infty}e^{-t^{2}}dt \leq \frac{C}{\sqrt{d}}.
$$
As $s < d/2$, one may check that 
$\hat{\s}_{\sf med}^{2+\epsilon} \le \big(\max_{i\not\in S} (\s\xi_i)^2/\beta\big)^{1+\epsilon/2} 
\leq (\s^{2}/\beta)^{1+\epsilon/2} \sum_{i=1}^{d}|\xi_{i}|^{2+\epsilon}  $. 
Since $\esp|\xi_{1}|^{2+\epsilon}<\infty$ this yields
$
\esp_{\bt, F,\s}\left(Z^{1+\epsilon}\right) \le C d. 
$
It follows that 
\begin{align*}
\esp_{\bt, F,\s}\left(Z\fcar_{Z\ge c_{*}/8 } \right) &\le 
\left(\esp_{\bt, F,\s}\left(Z^{1+\epsilon}\right) \right)^{1/(1+\epsilon)}
\prob_{\bt, F,\s}\left(Z\ge c_{*}/8  \right)^{\epsilon/(1+\epsilon)}
\le C d e^{-d/C}. 
\end{align*}
Combining  the last three displays yields the desired bound in expectation.

\subsection{Proof of part (i) of Proposition~\ref{prop:variance:gauss}}
In this proof, we write for brevity $\esp=\esp_{\bt, \s, \calN(0,1)}$ and 
$\prob=\prob_{\bt, \s, \calN(0,1)}$. 
Set
$$\varphi_d(t)=\frac{1}{d}\sum_{i=1}^d e^{itY_j},\quad \varphi(t)= \esp (\varphi_d(t)), 
\quad \varphi_0(t)=e^{-\frac{t^2\s^{2}}{2}}.$$
Since $s/d<1/8$ and 
$\varphi(t)=\varphi_0(t)\big( 1-\frac{|S|}{d}+\frac{1}{d}\sum_{j\in S} \exp(i\t_j t)\big)$, we have 
\begin{equation}\label{propfourier1}
\frac34\varphi_{0}(t)\le\Big(1-\frac{2s}{d}\Big)\varphi_0(t)\le |\varphi(t)|\le \varphi_0(t), \quad \forall t\in \RR. 
\end{equation}
Consider the events 
$$\calB_1=\Big\{ \s^{2}/2 \leq \tilde{\s}^{2}\leq 3\s^{2}/2 \Big\}\quad\text{and}\quad 
\calA_1=\left\{ \sup_{v\in \RR} |\varphi_d(v) -\varphi(v)|\le  \sqrt{\frac{\log(ed)}{d}}\right\}.$$ 
Note also that under the event $\calB_1$
\begin{equation}\label{propfourier1a}
\varphi_{0}(\hat{t}_{1})\ge \frac{1}{2}(es/\sqrt{d}+1)^{-1/4} . 
\end{equation}	
By Proposition \ref{proposition_over}, the event $\calB_1$ is of probability at least $1-e^{-cd}$  
if the tuning parameter $\gamma$ in the definition of $\tilde{\s}^{2}$ is small enough. 
 Using the bounded difference inequality, it is not hard to check that $\calA_1$ holds with probability 
at least $1-C/d$.  Moreover,
\begin{equation}\label{emp}
\esp\Big(\sqrt{d}\sup_{v\in \RR} |\varphi_d(v) -\varphi(v)|\Big)\le C.
\end{equation}
By the definition of $\hat{\s}^{2}$, on the event 
${\cal D} = \{|\varphi_{d}(\hat{t}_{1})|> (es/\sqrt{d}+1)^{-1}/4\}$ we have
$\hat{\s}^{2}=\tilde v^2 \le 8 \tilde{\s}^{2}$.
First, we bound the risk restricted to ${\cal D}\cap \calB_1^{c}$. We have
 $$
 \mathbf{E}\big(|\hat{\s}^{2}-\s^{2}|\fcar_{{\cal D}\cap\calB_{1}^{c}}\big)
\leq \mathbf{E}\big(|8\tilde{\s}^{2}+\s^{2}|\fcar_{\calB_{1}^{c}}\big).
 $$
Thus, using the Cauchy-Schwarz inequality and Proposition \ref{proposition_over} we find  
 \begin{equation}\label{eq:B_comp}
 \mathbf{E}\big(|\hat{\s}^{2}-\s^{2}|\fcar_{{\cal D}\cap\calB_{1}^{c}}\big)
\leq C\s^{2}e^{-d/C}\leq \frac{C'\s^{2}}{\sqrt{d}}.
 \end{equation}
Next, we bound the risk restricted to ${\cal D}^{c}$. It will be useful to note 
that  $ \calA_{1} \cap \calB_{1} \subset {\cal D}$. Indeed, on 
$ \calA_{1} \cap \calB_{1}$, using \eqref{propfourier1a} and the assumption $s<d/8$ we have 
	$$
|\varphi_{d}(\hat{t}_{1})| \geq \frac34\varphi_{0}(\hat{t}_{1}) - \sqrt{\frac{\log{(ed)}}{d}} 
\geq  \frac{3}{8({es}/{\sqrt{d}}+1)^{1/4}} - \sqrt{\frac{\log{(ed)}}{d}} >
\frac{1}{4({es}/{\sqrt{d}}+1)} . 
$$
Thus, applying again the Cauchy-Schwarz inequality and Proposition \ref{proposition_over} we find 
\begin{align}\label{eq:sigma_nul}
\mathbf{E}\big(|\hat{\s}^{2}-\s^{2}|\fcar_{{\cal D}^{c}}\big)
&=\mathbf{E}\big(|\tilde{\s}^{2}-\s^{2}|\fcar_{{\cal D}^{c}}\big)
\le \left(\mathbf{E}\big(|\tilde{\s}^{2}-\s^{2}|^2\big)\right)^{1/2}\left(\prob ({\cal D}^{c})\right)^{1/2}
\\ \nonumber
&\leq C\s^{2}\sqrt{ \prob (\calA_{1}^{c})+\prob (\calB_{1}^{c})  } \le 
C\s^{2}\sqrt{ \frac{1}{d} + e^{-cd} }\leq 
\frac{C'\s^{2}}{\sqrt{d}}
.
\end{align}
To complete the proof, it remains to handle the risk restricted to the event 
$\mathcal{C} = {\cal D}\cap\calB_{1}$. On this event, $\hat{\s}^{2}=\tilde v^2 = -2\log (|\varphi_d(\hat{t}_1)|)/\hat{t}_1^2$, so that
\begin{equation}\label{propfourier3}
|\hat{\s}^2-\s^2|\le \Big|\frac{2\log (|\varphi_d(\hat{t}_1)|)}{\hat{t}_1^2}-
\frac{2 \log (|\varphi(\hat{t}_1)|)}{\hat{t}_1^2}\Big|+ \Big|\frac{2 \log (|\varphi(\hat{t}_1)|)}{\hat{t}_1^2}+\s^2\Big|.
\end{equation} 
We will use the  inequality 
\begin{equation*}
\big|\log (|\varphi_j(t)|)-\log (|\varphi(t)|)\big|\le  \frac{|\varphi_j(t)-\varphi(t)|}
{|\varphi(t)| \wedge |\varphi_{j}(t)| }\,,\quad \forall t\in \RR, \quad j\in\{0,d\}.
\end{equation*}
Since $\s^2 =-{2 \log (\varphi_0(\hat{t}_1))}/{\hat{t}_1^2}$, it follows from the previous inequality and from
\eqref{propfourier1}  that
  \begin{equation*}
\Big|\frac{2 \log (|\varphi(\hat{t}_1)|)}{\hat{t}_1^2}+\s^2\Big|\le 
\frac{4s \varphi_{0}(\hat{t}_1)}
{d\,\hat{t}_1^2 (|\varphi(\hat{t}_1)| \wedge \varphi_{0}(\hat{t}_1)) }
\le  \frac{Cs}{d\,\hat{t}_1^2} \le 
\frac{C's\tilde{\s}^{2}}{d\log(16(es/\sqrt{d}+1))}.
\end{equation*} 
Therefore,
\begin{equation}\label{eq:part1}
\mathbf{E}\Big(\Big|\frac{2 \log (|\varphi(\hat{t_1})|)}{\hat{t}_1^2}+\s^2\Big|
\fcar_{\mathcal{C}}\Big) \leq \frac{Cs\s^{2}}{d\log(es/\sqrt{d}+1)}.
\end{equation}
Next, recall that on $\mathcal{D}$ we have $|\varphi_{d}(\hat{t}_{1})|> (es/\sqrt{d}+1)^{-1}/4$. Using this fact and the inequalities \eqref{propfourier1}, \eqref{propfourier1a} 
we obtain
\begin{align*}
\Big|\frac{\log (|\varphi_d(\hat{t}_1)|)}{\hat{t}_1^2}-\frac{ \log (|\varphi(\hat{t_1})|)}{\hat{t}_1^2}\Big|
\fcar_{\mathcal{C}}
     &\le\frac{\sup_{v\in \RR} |\varphi_d(v) -\varphi(v)|}
{\hat{t}^{2}_{1}(|\varphi(\hat{t}_{1})| \wedge |\varphi_{d}(\hat{t}_{1})|) }\fcar_{\mathcal{C}} 
\\
&\leq 
\frac{C\s^{2}U}{\sqrt{d}\,\log(es/\sqrt{d}+1)}\left(\frac{es}{\sqrt{d}}+1\right)\, , 
\end{align*}
where $U=\sqrt{d}\,\sup_{v\in \RR} |\varphi_d(v) -\varphi(v)|$. 
Bounding $\esp(U)$ by \eqref{emp} we finally get
  \begin{equation}\label{eq:part3}
  \esp\left[\Big|\frac{\log (|\varphi_d(\hat{t}_1)|)}{\hat{t}_1^2}-
\frac{ \log (|\varphi(\hat{t_1})|)}{\hat{t}_1^2}\Big|\fcar_{\mathcal{C}}\right]\le 
C\s^2\max\left(\frac1{\sqrt{d}},\frac{s}{d\log (es/\sqrt{d}+1)}\right) .  
\end{equation} 
  We conclude by combining inequalities
   \eqref{eq:B_comp} - \eqref{eq:part3}.

\subsection{Proof of Theorems~\ref{theorem_upperbound_noise_subgaussian} and~\ref{theorem_upperbound_noise_polynomial}}

Let $\|\bt\|_0\le s$ and denote by $S$ the support of $\bt$. Note first that, by the definition of $\hat{\s}^2$, 
\begin{equation}\label{upper_crucial}
	\frac{\s^2}{d}\sum_{i=1}^{d-2s} \xi_{(i)}^2 \le \hat{\s}^2\le \frac{\s^2}{d}\sum_{i\in S^c} \xi_{i}^2,
\end{equation}
where $\xi_{(1)}^2\le \cdots\le \xi_{(d)}^2$ are the ordered values of $\xi_1^2,\dots,\xi_d^2$. Indeed, the right hand inequality in \eqref{upper_crucial} follows from the relations
$$
\sum_{k=1}^{d-s} Y_{(k)}^2 = \min_{J: |J|=d-s} \sum_{i\in J} Y_{(i)}^2  \le \sum_{i\in S^c}Y_{(i)}^2 = \sum_{i\in S^c} \s^2\xi_{i}^2.
$$
To show the left hand inequality in \eqref{upper_crucial}, notice that at least $d-2s$ among the $d-s$ order statistics $Y_{(1)}^2, \dots,Y_{(d-s)}^2$ correspond to observations $Y_k$ of pure noise, \ie $Y_k=\s \xi_k$.  The sum of squares of such observations is bounded from below by the sum of the smallest $d-2s$ values $\s^2\xi_{(1)}^2, \dots, \s^2\xi_{(d-2s)}^2$ among $\s^2\xi_{1}^2, \dots, \s^2\xi_{d}^2$.

Using \eqref{upper_crucial} we get
\begin{equation*}
	\Big(\hat{\s}^2-\frac{\s^2}{d}\sum_{i=1}^d \xi_i^2 \Big)^2 \le \frac{\s^4}{d^2} \Big( \sum_{i=d-2s+1}^d \xi_{(i)}^2 \Big)^2,
\end{equation*}
so that
\begin{equation*}
	\esp_{\bt,P_\xi,\s} \Big(\hat{\s}^2-\frac{\s^2}{d}\sum_{i=1}^d \xi_i^2 \Big)^2 \le \frac{\s^4}{d^2} \Big(\sum_{i= 1}^{2s} \sqrt{ \esp \xi_{(d-i+1)}^4} \Big)^2.
\end{equation*}
Then
\begin{eqnarray*}
\esp_{\bt,P_\xi,\s} (\hat{\s}^2-\s^2)^2 &\le& 2\esp_{\bt,P_\xi,\s} \Big(\hat{\s}^2-\frac{\s^2}{d}\sum_{i=1}^d \xi_i^2 \Big)^2+ 2\esp_{\bt,P_\xi,\s} \Big(\frac{\s^2}{d}\sum_{i=1}^d \xi_i^2 -\s^2\Big)^2
\\
&\le &\frac{2\s^4}{d^2} \Big(\sum_{i= 1}^{2s} \sqrt{ \esp \xi_{(d-i+1)}^4} \Big)^2 + \frac{2\s^4 \esp(\xi_1^4)}{d}.
\end{eqnarray*}
Note that under assumption \eqref{definition_subgaussian} we have $\esp(\xi_1^4)<\infty$ and Lemmas~\ref{lemma_esp4_subgaussian} and \ref{lemma:sum} yield
\begin{align*}
\sum_{i= 1}^{2s} \sqrt{ \esp \xi_{(d-i+1)}^4} \le \sqrt{C}	 \sum_{i=1}^{2s} \log^{2/a}\big(ed/i\big) \le C'\sqrt{C} s \log^{2/a}\Big(\frac{ed}{2s}\Big).
\end{align*}
This proves Theorem~\ref{theorem_upperbound_noise_subgaussian}. To prove Theorem~\ref{theorem_upperbound_noise_polynomial}, we act analogously by using Lemma~\ref{lemma_esp4_polynomial} and the fact that  $\esp(\xi_1^4)<\infty$
under assumption \eqref{definition_polynomial} with $a>4$.


{
\subsection{Proof of Theorem~\ref{theorem_adaptiveupperbound_variance}}
With the same notation as in the proof of Theorem~\ref{theorem_adaptiveupperbound}, we have
\begin{equation}\label{eq54}
	\hat{\s}^2-\s^2 = \frac{\s^2}d \big( \|\bxi\|_2^2-d \big) + \frac1d 
\left(\|\bu\|_2^2 - 2\s \bu^T \bxi\right). 
\end{equation}
It follows from (\ref{combination}) that
$$
\|\bu\|_2^2 + 2\s |\bu^T \bxi | \le 3 \s  |\bu^T \bxi | +\frac{\tilde{\s}}{2}\Big\{ \Big(\sum_{j=1}^s \l_j^2\Big)^{1/2} \| \bu\|_2 -\sum_{j=s+1}^d \l_j |u|_{(d-j+1)} \Big\}.
$$ 
Arguing as in the proof of Theorem ~\ref{theorem_adaptiveupperbound}, we obtain
$$
\|\bu\|_2^2 + 2\s |\bu^T \bxi | \le \Big( U_1
+ \frac{\tilde{\s}}{2}\Big(\sum_{j=1}^s \l_j^2\Big)^{1/2}  
+ U_2\Big) \| \bu\|_2,
$$
where
$$
U_1=3 \s \Big(\sum_{j=1}^s |\xi|_{(d-j+1)} ^2\Big)^{1/2}, \quad U_2= \Big(\sum_{j=s+1}^d \left(3\s |\xi|_{(d-j+1)} 
- \frac{\tilde{\s}}{2}\l_{j} \right)^{2}_{+} \Big)^{1/2}
$$
Using the Cauchy-Schwarz inequality, Proposition \ref{proposition_over} and \eqref{eqq1} and writing for brevity 
$\esp=\esp_{\bt,P_\xi,\s}$ we find
\begin{equation*}
		\esp\Big(\tilde{\s}
\Big(\sum_{j=1}^s \l_j^2\Big)^{1/2} \|\bu\|_2\Big)\le 
\Big(\sum_{j=1}^s \l_j^2\Big)^{1/2} \sqrt{\esp(\tilde{\s}^2)} \sqrt{\esp(\|\bu\|^{2}_2)}
 \le C \s^{2}\sum_{j=1}^s \l_j^2.
		\end{equation*}
Since $\mathbf{E}(\xi_{1}^{4})<\infty$ we also have $\esp\big| \|\bxi\|_2^2-d \big|\le C\sqrt{d}$.
Finally, using again \eqref{eqq1} we get, for $k=1,2$, 
$$
\esp(U_k \|\bu\|_{2})\le  \sqrt{\esp(\|\bu\|^{2}_2)} \sqrt{\esp(U_k^2)}
\le \s \Big(\sum_{j=1}^s \l_j^2\Big)^{1/2} \sqrt{\esp(U_k^2)}\le C \s^{2}\sum_{j=1}^s \l_j^2,
$$
where the last inequality follows from the same argument 
as in the proof of Theorem ~\ref{theorem_adaptiveupperbound}.
These remarks together with \eqref{eq54} imply 
$$
\esp\left(|\hat{\s}^2-\s^2 |\right) \leq \frac{C}{d}\Big(\s^{2}\sqrt{d} + 
\s^{2}\sum_{j=1}^s \l_{j}^{2}\Big). 
$$
We conclude the proof by bounding $\sum_{j=1}^s \l_{j}^{2}$ in the same way 
as in the end of the proof of Theorem~\ref{theorem_adaptiveupperbound}.
}
\section{Proofs of the lower bounds}

\subsection{Proof of Theorems~\ref{theorem_lowerbound_noise_subgaussian} and~\ref{theorem_lowerbound_noise_polynomial} and part (ii) of Proposition ~\ref{prop:variance:gauss}}\label{subsection_proof_lowerbound_noise}
Since we have $\ell(t)\ge \ell(A)\fcar_{t\ge A}$ for any $A>0$, it is enough to prove the theorems for the indicator loss $\ell(t)=\fcar_{t\ge 1}$. This remark is valid for all the proofs of this section and will not be further repeated. 

(i) We first prove the lower bounds with the rate $1/{\sqrt{d}}$ in Theorems~\ref{theorem_lowerbound_noise_subgaussian} and~\ref{theorem_lowerbound_noise_polynomial}. Let $f_0:\RR\to [0, \infty)$ be a probability density with the following properties: $f_0$ is continuously differentiable, symmetric about 0, supported on $[-3/2,3/2]$, with variance 1 and finite Fisher information $I_{f_0}= \int (f_0'(x))^2(f_0(x))^{-1}dx$. The existence of such $f_0$ is shown in Lemma~\ref{lemma_density}. Denote by $F_0$ the probability distribution corresponding to $f_0$.
Since $F_0$ is zero-mean, with variance 1 and supported on $[-3/2,3/2]$ it belongs to $\mathcal{G}_{a,\tau}$ for any $\tau>0$, $a>0$, and to $\mathcal{P}_{a,\tau}$ for any $\tau>0$, $a\ge 2$.  
Define $\prob_0=\prob_{0,F_0,1}$, $\prob_1=\prob_{0,F_0,\s_1}$ where $\s_1^2=1+c_0/\sqrt{d}$ and $c_0>0$ is a small constant to be fixed later.  
 Denote by $H(\prob_1,\prob_0)$ the Hellinger distance between $\prob_1$ and $\prob_0$. We have
\begin{equation}\label{hellgr}
	H^2(\prob_1,\prob_0) = 2\big(1-(1-h^2/2)^d\big)
\end{equation}
where $h^2=\int (\sqrt{f_0(x)}-\sqrt{f_0(x/\s_1)/\s_1})^2 dx$. By Theorem 7.6. in~\cite{Ibragimov},
$$
h^2 \le \frac{(1-\s_1)^2}{4}\sup_{t\in [1,\s_1]} I(t)
$$
where $I(t)$ is the Fisher information corresponding to the density $f_0(x/t)/t$, that is $I(t)= t^{-2}I_{f_0}$.
It follows that $h^2\le {\bar c}c_0^2/d$ where ${\bar c}>0$ is a constant. This and \eqref{hellgr} imply that for $c_0$ 
small enough we have $H(\prob_1,\prob_0)\le 1/2$.
Finally, choosing such a small $c_0$ and using Theorem~2.2(ii) in~\cite{Tsybakov2009} we obtain
 \begin{eqnarray*}
 	&&\inf_{\hat{T}} \max\Big\{ \prob_0 \Big(\Big|\hat{T}-1\Big|\ge \frac{c_0}{2(1+c_0)\sqrt{d}}\Big),  \prob_1 \Big(\Big|\frac{\hat{T}}{\s_1^2}-1\Big|\ge\frac{c_0}{2(1+c_0)\sqrt{d}}\Big)\Big\}\\
	&& \ge  
	\inf_{\hat{T}} \max\Big\{ \prob_0 \Big(|\hat{T}-1|\ge \frac{c_0}{2\sqrt{d}}\Big),  \prob_1 \Big(|\hat{T}-\s_1^2|\ge \frac{c_0}{2\sqrt{d}}\Big)\Big\}
	 \ge \frac{1-H(\prob_1,\prob_0)}{2}\ge \frac14.
 \end{eqnarray*}
  
(ii) We now prove the lower bound with the rate $\frac{s}{d}\log^{2/a}(ed/s)$ in Theorem~\ref{theorem_lowerbound_noise_subgaussian}. It is enough to conduct the proof for $s\ge s_0$ where $s_0>0$ is an 
arbitrary absolute constant. Indeed, for $s\le s_0$ we have $\frac{s}{d}\log^{2/a}(ed/s) \le C/\sqrt{d}$ where $C>0$ is  an absolute constant and thus Theorem~\ref{theorem_lowerbound_noise_subgaussian} follows already from
the lower bound with the rate $1/\sqrt{d}$ proved in item (i). Therefore, in the rest of this proof we assume without loss of generality that  $s\ge 32$.
 
We take $P_\xi= U$ where $U$ is the Rademacher  distribution, that is the uniform distribution on $\{-1,1\}$.  Clearly, $U\in\mathcal{G}_{a,\tau}$. Let $\d_1,\ldots,\d_d$ be i.i.d. Bernoulli random variables with probability of success $\prob (\d_1=1)=\frac{s}{2d}$, and let 
$
 	\e_1,\ldots,\e_d
 $ 
be i.i.d. Rademacher random variables
 that are independent of $(\d_1,\ldots,\d_d)$.
Denote by $\mu$ the distribution of $(\a\d_1\e_1,\ldots,\a\d_d\e_d)$ where $\a=(\tau/2)\log^{1/a}(ed/s)$. Note that $\mu$ is not necessarily supported on $\T_s=\{\bt\in\RR^d\suchthat\|\bt\|_0\le s\}$ as the number of nonzero components of a vector drawn from $\mu$ can be larger than $s$. Therefore, we consider a restricted to $\T_s$ version of $\mu$ defined by  
 \begin{equation}\label{definition_barmu}
 	\bar{\mu}(A) = \frac{\mu(A\cap\T_s)}{\mu(\T_s)}
 \end{equation}
 for all Borel subsets $A$ of $\RR^d$.
Finally, we introduce two mixture probability measures 
\begin{equation}\label{definition_apriori}
	{\mathbb P}_\mu = \int \prob_{\bt,U,1} \, \mu(d\bt) \quad\text{and}\quad {\mathbb P}_{\bar{\mu}} = \int \prob_{\bt,U,1} \, \bar\mu(d\bt).
\end{equation}
Notice that there exists a probability measure $\tilde P\in \mathcal{G}_{a,\tau}$ such 
that
\begin{equation}\label{crucial}
	{\mathbb P}_\mu = \prob_{0,\tilde P,\s_0} 
\end{equation}
where $\s_0>0$ is defined by
\begin{equation}\label{sigma0}
\s_0^2=1+\frac{\tau^2 s}{8 d}\log^{2/a}(ed/s) \le 1+\frac{\tau^2}{8}.
\end{equation}
Indeed, $\s_0^2=1+\frac{\a^2s}{2d}$ is the variance of zero-mean random variable $\a\d\e+\xi$, where $\xi\sim U$, $\e\sim U$, $\d\sim \mathcal{B}\big(\frac{s}{2d}\big)$ and $\e,\xi,\d$ are jointly independent. Thus, to prove \eqref{crucial} it is enough to show that, for all $t\ge2$,
\begin{equation}\label{probb}
	\prob\big((\tau/2)\log^{1/a}(ed/s) \,\d\e + \xi>t \s_0\big) \le e^{-(t/\tau)^a}.
\end{equation}
But this inequality immediately follows from the fact that  for $t\ge2$
 the probability in \eqref{probb} is smaller than
\begin{align}
	\prob(\e=1, \d=1)\,\fcar_{(\tau/2)\log^{1/a}(ed/s)>t-1} \le \frac{s}{4d}\fcar_{\tau\log^{1/a}(ed/s)>t} \le e^{-(t/\tau)^a}.
\end{align}
Now, for any estimator $\hat T$ and any $u>0$ we have
\begin{eqnarray}\nonumber
		 &&\sup_{P_\xi\in\mathcal{G}_{a,\tau}} \sup_{\s>0} \sup_{\|\bt\|_0\le s}\prob_{\bt,P_\xi,\s} \Big( \Big| \frac{ \hat{T}}{\s^2} -1\Big| \ge u \Big) \\ \nonumber
		 &&\qquad \ge \max\Big\{ \prob_{0,\tilde P,\s_0} ( | \hat{T} -\s_0^2| \ge \s_0^2u),  \int \prob_{\bt,U,1} ( | \hat{T} -1| \ge u)  {\bar \mu}(d\bt)\Big \}\\
		 &&\qquad \ge \max\Big\{ {\mathbb P}_\mu( | \hat{T} -\s_0^2| \ge \s_0^2u),  {\mathbb P}_{\bar \mu} ( | \hat{T} -1| \ge \s_0^2 u) \Big \} \label{lowerr}
	\end{eqnarray}
where the last inequality uses \eqref{crucial}. Write $\s_0^2=1+2\phi$ where 
$\phi= \frac{\tau^2 s}{16 d}\log^{2/a}(ed/s)$ and choose $u=\phi/\s_0^2 \ge \phi/(1+\tau^2/8)$. Then, the expression in \eqref{lowerr} is bounded from below by the probability of error in the problem of distinguishing between two simple hypotheses ${\mathbb P}_{\mu}$ and ${\mathbb P}_{\bar \mu}$, for which Theorem~2.2 in~\cite{Tsybakov2009} yields
\begin{eqnarray}
		 \max\Big\{ {\mathbb P}_\mu( | \hat{T} -\s_0^2| \ge \phi),  {\mathbb P}_{\bar \mu} ( | \hat{T} -1| \ge \phi) \Big \} \ge \frac{1-V({\mathbb P}_{ \mu},{\mathbb P}_{\bar\mu})}{2}
		 \label{lowerr1}
	\end{eqnarray}
where $V({\mathbb P}_{ \mu},{\mathbb P}_{\bar\mu})$ is the total variation distance between ${\mathbb P}_{\mu}$ and ${\mathbb P}_{\bar \mu}$. The desired lower bound follows from \eqref{lowerr1} and Lemma~\ref{lemma_TV} for any $s\ge 32$.

(iii) Finally, we prove the lower bound with the rate $\tau^2(s/d)^{1-2/a}$ in Theorem~\ref{theorem_lowerbound_noise_polynomial}. Again, we do not consider the case $s\le 32$ since in this case the rate 
$1/\sqrt{d}$ is dominating and Theorem~\ref{theorem_lowerbound_noise_polynomial} follows from item (i) above. For $s\ge 32$, the proof uses the same argument as in item (ii) above but we choose $\a=(\tau/2)(d/s)^{1/a}$.
Then the variance of  $\a\d\e+\xi$ is equal to
$$\s_0^2=1+ \frac{\tau^2(s/d)^{1-2/a}}{8}.
$$
Furthermore, with this definition of $\s_0^2$ there exists $\tilde P\in {\cal P}_{a,\tau}$ such that \eqref{crucial} holds. Indeed, 
analogously to \eqref{probb} we now have, for all $t\ge 2$, 
\begin{align}
	\prob\big(\a \,\d\e + \xi>t \s_0\big) &\le \prob(\e=1, \d=1)\,\fcar_{(\tau/2)(d/s)^{1/a}>t-1} \le \frac{s}{4d}\fcar_{\tau(d/s)^{1/a}>t} \le (t/\tau)^a.
\end{align}
To finish the proof, it remains to repeat the argument of \eqref{lowerr} and \eqref{lowerr1} with $\phi=\frac{\tau^2(s/d)^{1-2/a}}{16}.$
\subsection{Proof of Theorem~\ref{theorem_lowerbound_norm_subgaussian}}

We argue similarly to the proof of Theorems~\ref{theorem_lowerbound_noise_subgaussian} and~\ref{theorem_lowerbound_noise_polynomial}, in particular, we
set $\a=(\tau/2)\log^{1/a}(ed/s)$ when proving the bound on the class ${\cal G}_{a,\tau}$, and $\a=(\tau/2)(d/s)^{1/a}$ when proving the bound on~${\cal P}_{a,\tau}$. 
In what follows, we only deal with the class ${\cal G}_{a,\tau}$ since the proof  for~${\cal P}_{a,\tau}$ is analogous. Consider the measures $\mu$, $\bar{\mu}$, ${\mathbb P}_{\mu}$, ${\mathbb P}_{\bar{\mu}}$ and 
$\tilde{P}$ defined in Section~\ref{subsection_proof_lowerbound_noise}. Similarly to \eqref{lowerr}, for any estimator $\hat T$ and any $u>0$ we have
\begin{eqnarray}\nonumber
&&\sup_{P_\xi\in\mathcal{G}_{a,\tau}} \sup_{\s>0} \sup_{\|\bt\|_0\le s}\prob_{\bt,P_\xi,\s} \big( | \hat{T} -\|\bt\|_2| \ge \s u \big) \\ \nonumber
&&\qquad \ge \max\Big\{ \prob_{0,\tilde P,\s_0} ( | \hat{T}| \ge \s_0 u),  \int \prob_{\bt,U,1} ( | \hat{T} -\|\bt\|_2| \ge u)  {\bar \mu}(d\bt)\Big \}\\
&&\qquad \ge \max\Big\{ {\mathbb P}_\mu( | \hat{T}| \ge \s_0u),  {\mathbb P}_{\bar \mu} ( | \hat{T} -\|\bt\|_2| \ge \s_0u) \Big \} \nonumber
\\
&&\qquad \ge  \max\Big\{ {\mathbb P}_\mu( | \hat{T}| \ge \s_0u),  {\mathbb P}_{\bar \mu} ( | \hat{T}| < \s_0u, \|\bt\|_2\ge 2\s_0u) \Big \}\nonumber
\\
&&\qquad \ge  \min_{B}\,\max\big\{ {\mathbb P}_\mu( B),  {\mathbb P}_{\bar \mu} ( B^c) - {\bar \mu}( \|\bt\|_2< 2\s_0u)\big \}
\nonumber
\\
&&\qquad \ge  \min_{B}\,\frac{ {\mathbb P}_\mu( B) +  {\mathbb P}_{\bar \mu} ( B^c)}{2} - \frac{{\bar \mu}( \|\bt\|_2< 2\s_0u)}2 \phantom{\Big\}}
\label{lowerr2}
\end{eqnarray}
where $\s_0$ is defined in \eqref{sigma0},  $U$ denotes the Rademacher law and $\min_{B}$ is the minimum over all Borel sets.
The third line in the last display is due to \eqref{crucial} and to the inequality $\s_0\ge1$.  Since 
$\min_{B}\,\big\{ {\mathbb P}_\mu ( B) +  {\mathbb P}_{\bar \mu} ( B^c)\big\} = 1-V({\mathbb P}_{ \mu},{\mathbb P}_{\bar\mu})$, we get
\begin{eqnarray}\label{lowerr3aa}
	&&\sup_{P_\xi\in\mathcal{G}_{a,\tau}} \sup_{\s>0} \sup_{\|\bt\|_0\le s}\prob_{\bt,P_\xi,\s} \big( | \hat{T} -\|\bt\|_2|/\s \ge u\big) 
	\ge \frac{1-V({\mathbb P}_{ \mu},{\mathbb P}_{\bar\mu}) - \bar \mu (\|\bt\|_2 < 2\s_0u)}{2}.
\end{eqnarray}
Consider first the case $s\ge 32$. Set $u=\frac{\a\sqrt{s}}{4\s_0}$. Then \eqref{eq1:lemma_TV} and \eqref{eq1:lemma_concentration_barmu} imply that  
$$
V({\mathbb P}_{ \mu},{\mathbb P}_{\bar\mu}) \le  e^{-\frac{3s}{16}}, \quad  \bar \mu (\|\bt\|_2 < 2\s_0u)\le 2e^{-\frac{s}{16}},
$$
which, together with \eqref{lowerr3aa} and the fact that $s\ge 32$ yields the result. 
 
Let now $s< 32$. Then we set $u=\frac{\a\sqrt{s}}{8\sqrt{2}\s_0}$.  It follows from \eqref{eq2:lemma_TV} and \eqref{eq2:lemma_concentration_barmu} that 
\begin{eqnarray*}\label{lowerr3a}
1-V({\mathbb P}_{ \mu},{\mathbb P}_{\bar\mu}) - \bar \mu (\|\bt\|_2 < 2\s_0u)\ge \prob\Big(\mathcal{B}\big(d,\frac{s}{2d}\big)= 1\Big) = \frac{s}{2}\Big(1-\frac{s}{2d}\Big)^{d-1}.
\end{eqnarray*}
It is not hard to check that the minimum of the last expression over  all integers $s,d$ such that $1\le s < 32$, $s\le d$, is bounded from below by a positive number independent of $d$. We conclude by combining these remarks with \eqref{lowerr3aa}.


%
%

{
\subsection{Proof of part (ii) of Proposition~\ref{prop:norm:known_sigma} and part (ii) of Proposition~\ref{prop:norm:poly:known_sigma}}
We argue similarly to the proof of Theorems~\ref{theorem_lowerbound_noise_subgaussian} and~\ref{theorem_lowerbound_noise_polynomial}, in particular, we
set $\a=(\tau/2)\log^{1/a}(ed/s)$ when proving the bound on the class ${\cal G}_{a,\tau}$, and $\a=(\tau/2)(d/s)^{1/a}$ when proving the bound on~${\cal P}_{a,\tau}$. 
In what follows, we only deal with the class ${\cal G}_{a,\tau}$ since the proof  for~${\cal P}_{a,\tau}$ is analogous. 
Without loss of generality we assume that $\s=1$.

To prove the lower bound with the rate $\phi^{\circ}_{\sf exp}(s,d)$, we only need to prove 
it for $s$ such that $s \le c_{0}\sqrt{d}/\log^{2/a}(ed) $ 
with any small absolute constant $c_{0}>0$, since the rate is increasing with $s$. 

Consider the measures $\mu$,  $\bar{\mu}$, ${\mathbb P}_{\mu}$, ${\mathbb P}_{\bar{\mu}}$ 
defined in Section~\ref{subsection_proof_lowerbound_noise} with $\sigma_0=1$.  
Let $\xi_1$ be distributed with c.d.f. $F_0$ defined in  item (i) of the proof of 
Theorems~\ref{theorem_lowerbound_noise_subgaussian} and~\ref{theorem_lowerbound_noise_polynomial}. 
Using the notation as in the proof of Theorems~\ref{theorem_lowerbound_noise_subgaussian}
and~\ref{theorem_lowerbound_noise_polynomial},  we define $\tilde{P}$ 
as the distribution of  $\tilde{\xi}_1=\s_1\xi_1+\a\delta_1 \e_1$ with    
$\s_1^2=(1+\a^2s/(2d))^{-1}$ where now $\delta_1$ is the Bernoulli random variable with 
$\prob(\delta_1=1)=\frac{s}{2d}(1+\a^2s/(2d))^{-1}$.  
By construction,  $\esp \tilde{\xi}_1=0$ and $\esp \tilde{\xi}_1^2=1$. 
Since the support of $F_0$ is in $[-{3}/{2}, {3}/{2}]$ one can check as in item (ii)
of the proof of Theorems~\ref{theorem_lowerbound_noise_subgaussian} 
and~\ref{theorem_lowerbound_noise_polynomial} 
that $\tilde{P}\in \mathcal{G}_{a,\tau}$.  
Next, analogously to \eqref{lowerr2} - \eqref{lowerr3aa} we obtain that, for any $u>0$,
  $$\sup_{P_\xi \in \mathcal{G}_{a,\tau}}\sup_{\|\t\|_0\le s}\prob_{\bt,P_\xi,1} 
\big( | \hat{T} -\|\bt\|_2|\ge u\big) 
	\ge \frac{1-V({\mathbb P}_{ \bar \mu},P_{0,\tilde{P},1}) - \bar \mu (\|\bt\|_2 
< 2u)}{2}.$$ 
 Let $\mathbf{P}_0$ and $\mathbf{P}_1$ denote the distributions of 
 $(\xi_1,\ldots,\xi_d)$ and of
$(\s_1\xi_1,\ldots,\s_1\xi_d)$, respectively.  Acting as in item (i) of the proof of 
Theorems~\ref{theorem_lowerbound_noise_subgaussian} and~\ref{theorem_lowerbound_noise_polynomial}
and using the bound 
$$|1-\s_1|\le {\alpha^{2}s}/{d}
= \frac{\tau^2}{4} \frac{s}{d}\log^{2/a}(ed/s)\le  C c_0 /\sqrt{d} 
$$ 
we find that $V(\mathbf{P}_0,\mathbf{P}_1)\le H(\mathbf{P}_0,\mathbf{P}_1) \le 2\kappa c_{0}^{2}$ 
for some $\kappa >0$. 
 Therefore, $V(\mathbb{P}_{\mu},P_{0,\tilde{P},1})=
V(\mathbf{P}_0*\mathbf{Q}, \mathbf{P}_1*\mathbf{Q})\le V(\mathbf{P}_0,\mathbf{P}_1)\le 2\kappa c_{0}^{2}$ where $\mathbf{Q}$ denotes the distribution of $(\a\delta_1 \e_1, \ldots,  \a\delta_d \e_d)$. This bound and 
the fact that $V(\mathbb{P}_{\bar{\mu}}, P_{0,\tilde{P},1})\le 
V(\mathbb{P}_{\bar{\mu}},\mathbb{P}_{\mu} )+ V( 
\mathbb{P}_{\mu},P_{0,\tilde{P},1})$ imply
 $$
 \sup_{P_\xi \in \mathcal{G}_{a,\tau}}\sup_{\|\t\|_0\le s}\prob_{\bt,P_\xi,1} \big( | \hat{T} -\|\bt\|_2|\ge u\big) 
 \ge \frac{1-V({\mathbb P}_{ \mu},{\mathbb P}_{\bar\mu}) - \bar \mu (\|\bt\|_2 < 2u)}{2} - \kappa c_{0}^{2}.
 $$
We conclude by repeating the argument after  \eqref{lowerr3aa} in the proof of 
Theorem~\ref{theorem_lowerbound_norm_subgaussian} and choosing $c_{0}>0$ small enough 
to guarantee that the right hand side of the last display is positive. 
}
\subsection{Proof of part (ii) of Proposition~\ref{prop:variance:gauss}}
{
The lower bound with the rate $1/{\sqrt{d}}$ follows from the argument as in item (i) of the proof of Theorems~\ref{theorem_lowerbound_noise_subgaussian} and~\ref{theorem_lowerbound_noise_polynomial} if we replace there $F_{0}$ 
by the standard Gaussian distribution. The lower bound with the rate
 $\frac{s}{d(1+\log_{+}(s^{2}/d))}$ follows from Lemma~\ref{lemma:lowerbound:norm:variance} and 
the lower bound for
estimation of $\|\bt\|_{2}$ in Proposition~\ref{prop:norm:gauss}.
}

\subsection{Proof of Proposition~\ref{proposition_suboptimality}}

Assume that $\bt=0$, $\s=1$ and set 
\begin{equation*}
	\xi_i = \sqrt3 \e_i u_i, 
\end{equation*}
where the $\e_i$'s and the $u_i$ are independent, with Rademacher and uniform distribution on $[0,1]$ respectively. Then note that
\begin{align}\label{43}
	\esp_{0,P_\xi,1} \big(\hat{\s}_*^2-1\big)^2 &\ge \big(\esp_{0,P_\xi,1} (\hat{\s}_*^2)-1\big)^2 = \Big(\esp_{0,P_\xi,1} \Big\{ \hat{\s}_*^2-\frac3d \sum_{i=1}^d u_i^2\Big\} \Big)^2,
\end{align}
since $\esp(u_i^2)=1/3$. Note also that $\hat{\s}_*^2=\frac{3}{d/2}\sum_{i=1}^{d/2} u_{(i)}^2$.
Now,
\begin{align*}
\frac{1}{d/2}\sum_{i=1}^{d/2} u_{(i)}^2-\frac{1}{d}\sum_{i=1}^d u_i^2&=\frac{1}{d}\sum_{i=1}^{d/2}  u_{(i)}^2-\frac{1}{d}\sum_{i=d/2+1}^d u_{(i)}^2\\
&\le \frac{1}{d}\sum_{i=1}^{d/4}  u_{(i)}^2-\frac{1}{d}\sum_{i=3d/4+1}^d u_{(i)}^2\\
&\le \frac{1}{4}(u^2_{(d/4)}-u^2_{(3d/4)}).
\end{align*}
Since $u_{(i)}$ follows a Beta distribution with parameters $(i,d-i+1)$ we have $\esp(u_{(i)}^2)=\frac{i(i+1)}{(d+1)(d+2)}$, and 
\begin{align*}
\esp_{0,P_\xi,1}\Big( \frac{1}{d/2}\sum_{i=1}^{d/2} u_{(i)}^2-\frac{1}{d}\sum_{i=1}^d u_i^2\Big)&\le
\frac{1}{4}\esp_{0,P_\xi,1}(u^2_{(d/4)}-u^2_{(3d/4)}) = -\frac{d}{8(d+2)} \le -\frac{1}{24}.
\end{align*}
This and \eqref{43} prove the proposition. 


\section{Lemmas}

\subsection{Lemmas for the upper bounds}

\begin{lemma}\label{lemma_esp4_subgaussian}
	Let  $z_1,\ldots, z_d\simiid P$ with $P \in\mathcal{G}_{a,\tau}$ for some $a,\tau>0$ and let  
	$z_{(1)}\le\cdots\le z_{(d)}$ be the order statistics of $|z_1|,\ldots, |z_d|$. Then for $u>2^{1/a}\tau\vee 2$, we have
	\begin{equation}\label{eq:lemma_esp4_subgaussianA}
		\prob \Big( z_{(d-j+1)}\le u   \log^{1/a}\big(ed/j\big) , \forall\; j=1,\ldots, d \Big) \ge 1 - 4e^{-u^a/2},
	\end{equation}
{
	 and, for any $r>0$,
	\begin{equation}\label{eq:lemma_esp4_subgaussian}
		\esp \big(  z_{(d-j+1)}^r \big)\le C \log^{r/a}\big(ed/j\big), \qquad j=1,\dots, d,
	\end{equation}
	where $C>0$ is a constant depending only on $\tau$, $a$ and $r$.
}
\end{lemma}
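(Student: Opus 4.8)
The plan is to establish the high-probability bound \eqref{eq:lemma_esp4_subgaussianA} first, and then derive the moment bound \eqref{eq:lemma_esp4_subgaussian} from it by a layer-cake integration. For \eqref{eq:lemma_esp4_subgaussianA}, fix $u>2^{1/a}\tau\vee 2$ and set $t_j:=u\log^{1/a}(ed/j)$ for $j=1,\dots,d$. The complement of the event in question is $\bigcup_{j=1}^d\{z_{(d-j+1)}>t_j\}$, and $\{z_{(d-j+1)}>t_j\}$ is exactly the event that at least $j$ of $|z_1|,\dots,|z_d|$ exceed $t_j$. First I would note that $t_j\ge u>2$ for every $j$, since $\log(ed/j)\ge 1$ for $1\le j\le d$; hence the tail bound \eqref{definition_subgaussian} applies at level $t_j$ and gives $p_j:=\prob(|z_1|>t_j)\le 2e^{-(t_j/\tau)^a}=2\,(j/(ed))^{(u/\tau)^a}$.

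Next I would bound each term by the elementary union bound over the choice of the $j$ indices carrying a large value, namely $\prob(z_{(d-j+1)}>t_j)\le\binom{d}{j}p_j^{\,j}$, and use $\binom{d}{j}\le(ed/j)^j$ to obtain
\begin{equation*}
\prob\big(z_{(d-j+1)}>t_j\big)\le 2^j\Big(\tfrac{ed}{j}\Big)^{j}\Big(\tfrac{j}{ed}\Big)^{j(u/\tau)^a}=\exp\!\Big(j\Big[\log 2-\big((u/\tau)^a-1\big)\log\tfrac{ed}{j}\Big]\Big).
\end{equation*}
Since $u>2^{1/a}\tau$ forces $(u/\tau)^a>2$, we have $(u/\tau)^a-1>1$; splitting this factor and using $\log(ed/j)\ge 1$ together with $j/(ed)\le 1/e$ shows that the right-hand side is at most $\big(2e^{-(u/\tau)^a/2}\big)^j$ up to the harmless factor $e^{-j/2}$. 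The ratio of the resulting geometric-type series is $<1$ because $(u/\tau)^a>2$, so summing over $j\ge 1$ produces a total of order $e^{-(u/\tau)^a/2}$, and checking the numerical constants under $u>2^{1/a}\tau\vee 2$ yields the bound $4e^{-u^a/2}$ claimed in \eqref{eq:lemma_esp4_subgaussianA}.

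For \eqref{eq:lemma_esp4_subgaussian}, I would fix $j$, write $W:=z_{(d-j+1)}/\log^{1/a}(ed/j)$ so that $\esp(z_{(d-j+1)}^r)=\log^{r/a}(ed/j)\,\esp(W^r)$, and use $\esp(W^r)=\int_0^\infty rv^{r-1}\prob\big(z_{(d-j+1)}>v\log^{1/a}(ed/j)\big)\,dv$. Splitting the integral at $v_*:=2^{1/a}\tau\vee 2$, on $(0,v_*]$ I bound the probability by $1$ (contribution $v_*^r$), and on $(v_*,\infty)$ I insert the bound $4e^{-v^a/2}$ just proved (contribution $\le 4\int_{v_*}^\infty rv^{r-1}e^{-v^a/2}\,dv<\infty$). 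This gives $\esp(W^r)\le C(a,\tau,r)$, hence \eqref{eq:lemma_esp4_subgaussian}. The main obstacle is the summation step: one must exploit that $t_j$ is a genuine multiple of $\log^{1/a}(ed/j)$ so that the tail exponent $(u/\tau)^a\log(ed/j)$ dominates the entropy term $\log\binom{d}{j}\asymp j\log(ed/j)$ with a definite margin, which is what produces the per-index geometric decay and, through it, the stated dependence on $u$; once \eqref{eq:lemma_esp4_subgaussianA} is in hand the moment bound is routine.
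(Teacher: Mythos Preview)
Your proof is correct and follows essentially the same approach as the paper: bound $\prob(z_{(d-j+1)}>t_j)$ by $\binom{d}{j}p_j^{\,j}\le (ed/j)^j\cdot 2^j(j/(ed))^{j(u/\tau)^a}$, exploit $(u/\tau)^a>2$ and $\log(ed/j)\ge 1$ to get a geometric series in $j$, and then integrate the resulting tail bound to obtain the moments. The only cosmetic difference is that the paper integrates the sharper per-$j$ bound $\prob(z_{(d-j+1)}\ge v\tau\log^{1/a}(ed/j))\le 2e^{-jv^a/2}$ directly (rather than the uniform-in-$j$ bound from part~\eqref{eq:lemma_esp4_subgaussianA}) to get~\eqref{eq:lemma_esp4_subgaussian}, but both routes give the same conclusion.
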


\begin{proof}
	Using the definition of $\mathcal{G}_{a,\tau}$ we get that, for any $t\ge2$,
	\begin{equation*}
		\prob\big( z_{(d-j+1)}\ge t\big)\le 
\binom{d}{j}\prob^j(|z_1|\ge t)\le 2\Big(\frac{ed}{j}\Big)^j e^{-j(t/\tau)^a},\qquad j=1,\dots, d.
	\end{equation*}
{
Thus, for $v\ge 2^{1/a}\vee (2/\tau)$ we have
\begin{equation}\label{eqxx}
\prob (   z_{(d-j+1)}\ge v\tau \log^{1/a}({ed}/{j}))\le 2\Big(\frac{ed}{j}\Big)^{j(1-v^a)} \le 
2 e^{-jv^a/2},\qquad j=1,\dots, d,
\end{equation}
and 
	$$
	\prob \Big(   \exists \ j\in\{1,\ldots, d\}: 
z_{(d-j+1)}\ge v \tau\log^{1/a}(ed/j) \Big)\le  2\sum_{j=1}^d e^{-jv^a/2}\le 4e^{-v^a/2}
	$$
implying \eqref{eq:lemma_esp4_subgaussianA}. Finally, \eqref{eq:lemma_esp4_subgaussian}
follows by integrating \eqref{eqxx}. 
}
\end{proof}

\begin{lemma}\label{lemma_esp4_polynomial} Let  $z_1,\ldots, z_d\simiid P$ with $P \in\mathcal{P}_{a,\tau}$ for some $a,\tau>0$ and let  
	$z_{(1)}\le\cdots\le z_{(d)}$ be the order statistics of $|z_1|,\ldots, |z_d|$. Then
	  for  $u> (2 e)^{1/a} \tau\vee 2$,  we have 
	\begin{equation}\label{eq:lemma_esp4_polynomial}
		\prob \Big(   z_{(d-j+1)}\le u   \Big(\frac{d}{j} \Big)^{1/a} , \forall\; j=1,\ldots, d \Big) \ge  1-\frac{2 e \tau^a}{u^a}
	\end{equation}
	{
	and, for any $r\in (0,a)$,
	\begin{equation}\label{eq2:lemma_esp4_polynomial}
	\esp \big(  z_{(d-j+1)}^r \big)\le C \Big(\frac{d}{j}\Big)^{r/a}, \qquad j=1,\dots, d,
	\end{equation}
	where $C>0$ is a constant depending only on $\tau$, $a$ and $r$. 
	}
\end{lemma}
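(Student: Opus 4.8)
The plan is to follow the same route as in the proof of Lemma~\ref{lemma_esp4_subgaussian}, feeding in the polynomial tail bound \eqref{definition_polynomial} in place of the exponential one. First I would record an order-statistic tail bound: since $z_{(d-j+1)}$ is the $j$-th largest among $|z_1|,\dots,|z_d|$, the event $\{z_{(d-j+1)}\ge t\}$ means that at least $j$ of the $|z_i|$ exceed $t$, so a union bound over $j$-element subsets together with independence, the estimate $\binom dj\le(ed/j)^j$ and \eqref{definition_polynomial} give
\[
\prob\big(z_{(d-j+1)}\ge t\big)\le\binom dj\,\prob(|z_1|\ge t)^j\le\Big(\frac{ed}{j}\Big)^j\Big(\frac{\tau}{t}\Big)^{aj},\qquad t\ge2,\ j=1,\dots,d.
\]

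For \eqref{eq:lemma_esp4_polynomial} I would substitute $t=u(d/j)^{1/a}$; since $u>2$ and $d/j\ge1$ we have $t\ge2$, so the previous display applies and its right-hand side collapses to $\big(e\tau^a/u^a\big)^j$. The hypothesis $u>(2e)^{1/a}\tau$ forces $e\tau^a/u^a<1/2$, so summing the geometric series over $j\ge 1$,
\[
\prob\Big(\exists\,j\in\{1,\dots,d\}:\ z_{(d-j+1)}\ge u(d/j)^{1/a}\Big)\le\sum_{j=1}^d\Big(\frac{e\tau^a}{u^a}\Big)^j\le\frac{2e\tau^a}{u^a},
\]
which is \eqref{eq:lemma_esp4_polynomial}.

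For \eqref{eq2:lemma_esp4_polynomial} I would integrate the tail. Set $c_0=\max\{(2e)^{1/a}\tau,2\}$ and $t_0=c_0(d/j)^{1/a}$, which is of the correct order $(d/j)^{1/a}$ and satisfies $t_0\ge2$. Writing $\esp\big(z_{(d-j+1)}^r\big)=\int_0^\infty r\,t^{r-1}\prob\big(z_{(d-j+1)}>t\big)\,dt$, I bound the probability by $1$ on $[0,t_0]$, which contributes $t_0^r=c_0^r(d/j)^{r/a}$, and use the tail bound on $[t_0,\infty)$, where it reads $\prob(z_{(d-j+1)}\ge t)\le\big(e\tau^a d/(jt^a)\big)^j$. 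After the change of variable $t=t_0 w$ one gets $e\tau^a d/(jt^a)=e\tau^a/(c_0^a w^a)\le 1/(2w^a)$ for $w\ge1$, because $c_0^a\ge 2e\tau^a$; hence the tail part is at most $r\,t_0^r\,2^{-j}\int_1^\infty w^{r-1-a}\,dw=\frac{r\,c_0^r}{a-r}\,2^{-j}(d/j)^{r/a}$, the integral being finite precisely because $r<a$. Summing the two contributions gives $\esp\big(z_{(d-j+1)}^r\big)\le C(d/j)^{r/a}$ with $C=c_0^r\big(1+r/(a-r)\big)$ depending only on $a,\tau,r$.

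There is no substantial obstacle; the two points needing care are (i) never invoking \eqref{definition_polynomial} below the threshold $t=2$ that it requires — this is exactly what forces both $(2e)^{1/a}\tau$ and $2$ into the statement — and (ii) the restriction $r<a$, without which the tail integral in the last step diverges.
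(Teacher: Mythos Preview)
Your proof is correct and follows essentially the same route as the paper's: the same union-bound tail estimate $\prob(z_{(d-j+1)}\ge t)\le(ed/j)^j(\tau/t)^{aj}$, the same geometric-series summation for \eqref{eq:lemma_esp4_polynomial}, and the same tail-integration strategy for \eqref{eq2:lemma_esp4_polynomial} (the paper simply writes ``analogous to \eqref{eq:lemma_esp4_subgaussian}''). Your explicit splitting at $t_0=c_0(d/j)^{1/a}$ with $c_0=\max\{(2e)^{1/a}\tau,2\}$ and the bound $w^{-aj}\le w^{-a}$ for $w\ge1$ are a clean way to make the constant depend only on $(a,\tau,r)$ and not on $j$.
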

\begin{proof}
	Using the definition of $\mathcal{P}_{a,\tau}$ we get that, for any $t\ge2$,
	\begin{equation*}
		\prob\big( z_{(d-j+1)}\ge t\big)\le \Big(\frac{ed}{j}\Big)^j \Big(\frac{\tau}{t}\Big)^{ja}.
	\end{equation*}
	Set $t_j=u   \Big(\frac{d}{j} \Big)^{1/a}$ and $q=e(\tau/u)^a$. The assumption on $u$ yields that $q<1/2$, so that 
	$$
	\prob \Big(   \exists \ j\in\{1,\ldots, d\}: z_{(d-j+1)}\ge u   \Big(\frac{d}{j} \Big)^{1/a} \Big)\le \sum_{j=1}^d\Big(\frac{ed}{j}\Big)^j \Big(\frac{\tau}{t_j}\Big)^{ja} = \sum_{j=1}^d q^j\le 2q.
	$$
	This proves \eqref{eq:lemma_esp4_polynomial}. The proof of \eqref{eq2:lemma_esp4_polynomial} is analoguous to that of~\eqref{eq:lemma_esp4_subgaussian}.
\end{proof}
{
\begin{lemma}\label{lemma:sum} For all $a>0$ and all integers $1\le s\le d$,
$$\sum_{i=1}^{s} \log^{2/a}\big(ed/i\big) \le Cs \log^{2/a}\Big(\frac{ed}{s}\Big)$$
where $C>0$ depends only on $a$. 
\end{lemma}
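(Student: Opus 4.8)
The plan is to replace the sum by an integral and then rescale so that the factor $s$ comes out. Write $\beta=2/a>0$. The function $x\mapsto \log^{\beta}(ed/x)$ is positive and nonincreasing on $(0,d]$, so $\log^{\beta}(ed/i)\le \int_{i-1}^{i}\log^{\beta}(ed/x)\,dx$ for each $i=1,\dots,s$, and summing over $i$ gives $\sum_{i=1}^{s}\log^{\beta}(ed/i)\le \int_{0}^{s}\log^{\beta}(ed/x)\,dx$.

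Next I would substitute $x=su$ to obtain $\int_{0}^{s}\log^{\beta}(ed/x)\,dx = s\int_{0}^{1}\big(L+\log(1/u)\big)^{\beta}\,du$, where $L:=\log(ed/s)$. Since $s\le d$ we have $ed/s\ge e$, hence $L\ge1$; this normalization is what lets a multiplicative constant absorb the lower-order terms. It then remains to show $\int_{0}^{1}\big(L+\log(1/u)\big)^{\beta}\,du\le C_{\beta}L^{\beta}$ for a constant $C_{\beta}$ depending only on $\beta$ (hence only on $a$).

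For the last step I would use the elementary bound $(L+t)^{\beta}\le \max(1,2^{\beta-1})\,(L^{\beta}+t^{\beta})$ for $L,t\ge0$ — subadditivity of $t\mapsto t^{\beta}$ when $\beta\le1$, convexity when $\beta>1$ — together with $\int_{0}^{1}(\log(1/u))^{\beta}\,du=\Gamma(\beta+1)$, obtained via the change of variables $t=\log(1/u)$. Combined with $L\ge1$ this yields the claim with $C_{\beta}=\max(1,2^{\beta-1})\,(1+\Gamma(\beta+1))$, whence $\sum_{i=1}^{s}\log^{\beta}(ed/i)\le C_{\beta}\,s\,\log^{\beta}(ed/s)$. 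There is no genuine obstacle here; the only point requiring a little care is to treat the two ranges $\beta\le1$ and $\beta>1$ correctly in that elementary inequality, and to make sure the bound $L\ge1$ is invoked to control the constant term.
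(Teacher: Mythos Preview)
Your proof is correct: the integral comparison, the substitution $x=su$, the elementary bound on $(L+t)^{\beta}$ split according to $\beta\lessgtr 1$, and the evaluation $\int_{0}^{1}(\log(1/u))^{\beta}\,du=\Gamma(\beta+1)$ all go through, and the use of $L\ge 1$ to absorb the constant term is exactly right. The paper itself omits the proof (``The proof is simple and we omit it''), so there is nothing to compare against.
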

The proof is simple and we omit it.
}

\subsection{Lemmas for the lower bounds}

For two probability measures ${\rm P}_1$ and ${\rm P}_2$ on a measurable space $(\Omega, \mathcal{U})$, we denote by $V({\rm P}_1,{\rm P}_2)$ the total variation distance between ${\rm P}_1$ and ${\rm P}_2$:
$$V({\rm P}_1,{\rm P}_2)=\sup_{B\in \mathcal{U}}\left|{\rm P}_1(B)-{\rm P}_2(B)\right|.$$

\begin{lemma}[Deviations of the binomial distribution]\label{binomial}
	Let $\mathcal{B}(d,p)$ denote the binomial random variable with parameters $d$ and~$p\in (0,1)$. 
	Then, for any $\lambda>0$,
	\begin{align}\label{binomial1}
		&\prob\big(\mathcal{B}(d,p)\ge\lambda \sqrt{d}+dp\big) \le \exp\bigg(-\frac{\lambda^{2}}{2p(1-p)\big(1+\frac{\lambda}{3p\sqrt{d}}\big)}\bigg),\\
		\label{binomial2}
		&\prob\big(\mathcal{B}(d,p)\le -\lambda \sqrt{d}+dp\big) \le \exp\bigg(-\frac{\lambda^{2}}{2p(1-p)}\bigg).
	\end{align}
\end{lemma}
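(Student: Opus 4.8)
The plan is to realize $\mathcal{B}(d,p)$ as a sum $S_d=\sum_{i=1}^{d}X_i$ of i.i.d.\ Bernoulli$(p)$ random variables and to apply a one-sided Bernstein (Bennett) bound to the centred sum $S_d-dp=\sum_{i=1}^{d}(X_i-p)$, whose summands have mean $0$ and variance $p(1-p)$. The whole point is that for the \emph{upper} deviation one only uses the one-sided bound $X_i-p\le 1-p$, whereas for the \emph{lower} deviation one uses $p-X_i\le p$; this asymmetry is precisely what produces the correction $\frac{\lambda}{3p\sqrt d}$ in \eqref{binomial1} and what allows the analogous term to be dropped in \eqref{binomial2}.

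\emph{Upper tail.} By the Chernoff bound, for $h>0$,
$$\prob(S_d-dp\ge t)\le e^{-ht}\big(\esp e^{h(X_1-p)}\big)^{d}.$$
Since the map $\psi(u)=(e^{u}-1-u)/u^{2}$ is nondecreasing on $\RR$ (one has $\psi(u)=\int_0^1(1-s)e^{su}\,ds$) and $X_1-p\le 1-p$ almost surely, we get $e^{h(X_1-p)}-1-h(X_1-p)\le \psi\big(h(1-p)\big)\,h^{2}(X_1-p)^{2}$; taking expectations and using $\psi(u)\le \frac{1}{2(1-u/3)}$ for $0\le u<3$ yields
$$\log\esp e^{h(S_d-dp)}\le \frac{d\,p(1-p)\,h^{2}}{2\big(1-(1-p)h/3\big)},\qquad 0\le h<\tfrac{3}{1-p}.$$
Minimizing the resulting bound $\exp\!\big(-ht+\frac{d p(1-p)h^{2}}{2(1-(1-p)h/3)}\big)$ over $h$ (the standard fact that a Bernstein-type bound on the log-Laplace transform implies a Bernstein tail bound) gives
$$\prob(S_d-dp\ge t)\le \exp\!\Big(-\frac{t^{2}}{2\big(d\,p(1-p)+(1-p)t/3\big)}\Big).$$
Setting $t=\lambda\sqrt d$ and dividing numerator and denominator by $d$ turns the exponent into $-\frac{\lambda^{2}}{2p(1-p)\,(1+\lambda/(3p\sqrt d))}$, which is \eqref{binomial1}; when $\lambda\sqrt d+dp>d$ the probability is $0$ and the bound is trivial.

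\emph{Lower tail.} Apply the identical argument to $-(X_i-p)=p-X_i\le p$: this replaces $1-p$ by $p$ in the scale term while keeping the variance equal to $d\,p(1-p)$, so that $\prob(dp-S_d\ge t)\le \exp\!\big(-\frac{t^{2}}{2(d p(1-p)+pt/3)}\big)$. Dropping the nonnegative term $pt/3$ in the denominator only enlarges the right-hand side, whence $\prob(dp-S_d\ge t)\le \exp(-\frac{t^{2}}{2d p(1-p)})$, and with $t=\lambda\sqrt d$ this is exactly \eqref{binomial2}.

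There is no genuine obstacle here: the statement is a routine instance of the one-sided Bernstein inequality. The only points that require care are (i) using the correct asymmetric almost-sure bounds $b=1-p$ for the upper tail and $b=p$ for the lower tail on the centred summands, and (ii) the elementary rearrangement of the Bernstein exponent into the exact algebraic form stated in the lemma.
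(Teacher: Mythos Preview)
Your upper-tail argument is correct and gives a clean self-contained derivation of \eqref{binomial1}; the paper itself does not prove the lemma but simply cites formulas in Shorack--Wellner.

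The lower-tail argument, however, contains a genuine error. You write that ``dropping the nonnegative term $pt/3$ in the denominator only enlarges the right-hand side'', but the direction is reversed: removing a positive summand from the denominator \emph{increases} the fraction $t^{2}/\big[2(dp(1-p)+pt/3)\big]$, hence makes the exponent more negative and the exponential bound \emph{smaller}. Thus the Bernstein bound $\exp\!\big(-t^{2}/[2(dp(1-p)+pt/3)]\big)$ is strictly \emph{weaker} than the claimed $\exp\!\big(-t^{2}/[2dp(1-p)]\big)$, and the latter cannot be deduced from it.

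In fact the sub-Gaussian lower-tail bound \eqref{binomial2}, as stated for all $p\in(0,1)$, is not obtainable by any manipulation of Bernstein; for $p>1/2$ it can even fail (take $d=2$, $p=0.9$, $\l=0.8/\sqrt{2}$: then $\prob(\mathcal{B}(2,0.9)\le1)=0.19$ while the bound equals $\exp(-0.32/0.18)\approx0.17$). What \emph{is} true---and is all the paper ever uses, since it applies the lemma only with $p=s/(2d)\le1/2$---is the version for $p\le1/2$. A correct route there is the exact Chernoff bound $\prob(\mathcal{B}(d,p)\le dp-t)\le\exp\big(-d\,D(p-t/d\,\|\,p)\big)$ combined with the elementary inequality $D(p-\e\,\|\,p)\ge \e^{2}/(2p(1-p))$, valid for $0\le\e\le p\le1/2$ because the second derivative of $a\mapsto D(a\|p)$ equals $1/(a(1-a))\ge 1/(p(1-p))$ on $[0,p]$ in this range.
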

	Inequality \eqref{binomial1} is a combination of formulas (3)  and (10) on pages 440--441 in~\cite{ShorackWellner1986}.
Inequality \eqref{binomial2} is formula (6) on page 440 in~\cite{ShorackWellner1986}.

\begin{lemma}\label{lemma_TV}
	Let ${\mathbb P}_\mu$ and ${\mathbb P}_{\bar{\mu}}$ be the probability measures defined  in~(\ref{definition_apriori}). The total variation distance between these two measures satisfies
	\begin{equation}\label{eq1:lemma_TV}
		V({\mathbb P}_\mu,{\mathbb P}_{\bar{\mu}}) \le \prob\Big(\mathcal{B}\Big(d,\frac{s}{2d}\Big)>s\Big) \le e^{-\frac{3s}{16}},
	\end{equation}
and	
\begin{equation}\label{eq2:lemma_TV}
		V({\mathbb P}_\mu,{\mathbb P}_{\bar{\mu}}) \le 1- \prob\Big(\mathcal{B}\Big(d,\frac{s}{2d}\Big)= 0\Big)- \prob\Big(\mathcal{B}\Big(d,\frac{s}{2d}\Big)= 1\Big).
	\end{equation}
\end{lemma}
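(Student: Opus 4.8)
The plan is to reduce everything to elementary binomial estimates, exploiting that $\bar\mu$ is nothing but $\mu$ conditioned to live on $\T_s$, and that this conditioning does not perturb the conditional law of the observations given the number of active coordinates, as long as that number does not exceed $s$. First I would introduce $N=\sum_{i=1}^d\d_i$, the number of nonzero components of a vector drawn from $\mu$; since $\e_i\in\{-1,1\}$ a.s. one has $\|\bt\|_0=N$ a.s., so $\{\bt\in\T_s\}=\{N\le s\}$ up to a $\mu$-null set and $\mu(\T_s)=\prob(N\le s)\ge (1-s/(2d))^d>0$, which makes $\bar\mu$ well defined. Writing $b_k=\prob\big(\mathcal{B}(d,s/(2d))=k\big)$, $q=\sum_{k=0}^s b_k=\mu(\T_s)$, and letting $\nu_k$ denote the conditional law of $\bY$ under ${\mathbb P}_\mu$ given $\{N=k\}$, the definition \eqref{definition_apriori} gives ${\mathbb P}_\mu=\sum_{k=0}^d b_k\nu_k$; and since passing to $\bar\mu$ only reweights the value of $N$ while leaving the generation of $(\bt,\xi)$ given $\{N=k\}$ intact for every $k\le s$, one also gets ${\mathbb P}_{\bar\mu}=\sum_{k=0}^s (b_k/q)\,\nu_k$.

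The next step is the key one: the sub-probability measure $\sum_{k=0}^s b_k\nu_k$ is dominated by both mixtures — by ${\mathbb P}_\mu$ because only the nonnegative terms with $k>s$ were discarded, and by ${\mathbb P}_{\bar\mu}$ because $b_k\le b_k/q$ as $q\le1$. Hence ${\mathbb P}_\mu\wedge{\mathbb P}_{\bar\mu}\ge\sum_{k=0}^s b_k\nu_k$; since all measures in play are supported on the finite set of possible values of $\bY$ (each coordinate lying in $\{-1,1\}$ or in $\{\pm\a\pm1\}$) there is no measurability subtlety, and $V({\mathbb P}_\mu,{\mathbb P}_{\bar\mu})=1-\int\min(d{\mathbb P}_\mu,d{\mathbb P}_{\bar\mu})$. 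Bounding the overlap below by the total mass $\sum_{k=0}^s b_k=q$ yields $V({\mathbb P}_\mu,{\mathbb P}_{\bar\mu})\le 1-q=\prob\big(\mathcal{B}(d,s/(2d))>s\big)$, which is the first inequality in \eqref{eq1:lemma_TV}; bounding it instead by the mass of the $k=0$ and $k=1$ terms alone (the latter present because $s\ge1$) yields $V({\mathbb P}_\mu,{\mathbb P}_{\bar\mu})\le 1-b_0-b_1$, i.e. \eqref{eq2:lemma_TV}.

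It then remains to establish $\prob\big(\mathcal{B}(d,s/(2d))>s\big)\le e^{-3s/16}$. I would apply inequality \eqref{binomial1} of Lemma~\ref{binomial} with $p=s/(2d)$ and $\lambda=s/(2\sqrt d)$, so that $\lambda\sqrt d+dp=s$; this gives
$$\prob\big(\mathcal{B}(d,s/(2d))>s\big)\le\prob\big(\mathcal{B}(d,s/(2d))\ge s\big)\le\exp\!\Big(-\frac{\lambda^2}{2p(1-p)\big(1+\tfrac{\lambda}{3p\sqrt d}\big)}\Big).$$
A one-line computation ($\tfrac{\lambda}{3p\sqrt d}=\tfrac13$, $\lambda^2=\tfrac{s^2}{4d}$, $2p(1-p)\le\tfrac sd$) shows the exponent is at most $-\frac{s^2/(4d)}{(s/d)(4/3)}=-\tfrac{3s}{16}$, which completes the proof.

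The only genuinely delicate point is the identity ${\mathbb P}_{\bar\mu}=\sum_{k=0}^s (b_k/q)\nu_k$, that is, that conditioning $\mu$ on $\T_s$ leaves $\nu_k$ unchanged for $k\le s$. The cleanest way to make this rigorous is to realize both ${\mathbb P}_\mu$ and ${\mathbb P}_{\bar\mu}$ as push-forwards, under the map $\bY=(\a\d_i\e_i+\xi_i)_i$, of the joint law of $(\d_1,\dots,\d_d,\e_1,\dots,\e_d,\xi_1,\dots,\xi_d)$ and, respectively, of that joint law conditioned on $\{N\le s\}$; since $\{N\le s\}$ depends on $(\d_i)$ only and automatically holds on $\{N=k\}$ for $k\le s$, the conditional law of $(\e,\xi,\text{active set})$ given $\{N=k\}$ is the same in both cases for such $k$. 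Everything after this bookkeeping is the two elementary steps above. If one only wanted \eqref{eq1:lemma_TV}, the convex decomposition ${\mathbb P}_\mu=q\,{\mathbb P}_{\bar\mu}+(1-q)\,{\mathbb P}_{\mu(\cdot\mid\T_s^c)}$ gives it in one line via $V({\mathbb P}_\mu,{\mathbb P}_{\bar\mu})=(1-q)\,V({\mathbb P}_{\mu(\cdot\mid\T_s^c)},{\mathbb P}_{\bar\mu})\le 1-q$; but \eqref{eq2:lemma_TV} appears to need the finer mixture-over-$N$ argument.
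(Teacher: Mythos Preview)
Your proof is correct. The route differs from the paper's in two respects.

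First, for \eqref{eq1:lemma_TV} the paper argues at the level of the priors: it uses the data-processing bound $V({\mathbb P}_\mu,{\mathbb P}_{\bar\mu})\le V(\mu,\bar\mu)$ (since both kernels $\bt\mapsto\prob_{\bt,U,1}$ are the same) and then shows $|\mu(B)-\bar\mu(B)|\le\mu(B\cap\T_s^c)\le\mu(\T_s^c)$ by an elementary calculation. You instead work directly with the observation laws, decomposing over $N$ and lower-bounding the overlap ${\mathbb P}_\mu\wedge{\mathbb P}_{\bar\mu}$ by the common sub-probability $\sum_{k\le s}b_k\nu_k$. Both reach $V\le\mu(\T_s^c)=\prob(\mathcal B(d,s/(2d))>s)$; the paper's path is a bit shorter and avoids the need to verify that the conditional laws $\nu_k$ are unchanged under conditioning on $\T_s$, while yours is perhaps more explicit about where the bound comes from. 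The exponential tail estimate is then obtained identically via the Bernstein-type inequality \eqref{binomial1}.

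Second---and this is worth noting---your claim that ``\eqref{eq2:lemma_TV} appears to need the finer mixture-over-$N$ argument'' is not correct: \eqref{eq2:lemma_TV} is an immediate consequence of \eqref{eq1:lemma_TV}, since for any integer $s\ge 1$ one has $\{\mathcal B>s\}\subseteq\{\mathcal B\ge 2\}$ and hence $\prob(\mathcal B>s)\le 1-\prob(\mathcal B=0)-\prob(\mathcal B=1)$. This is exactly how the paper derives it, and it would let you dispense with the separate $b_0+b_1$ overlap bound. Your argument for \eqref{eq2:lemma_TV} is valid, just unnecessarily elaborate.
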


\begin{proof}
	We have 
	$$
		V({\mathbb P}_\mu,{\mathbb P}_{\bar{\mu}}) = \sup_{B}\left|\int\prob_{\bt,U,1} (B)d\mu(\bt)-\int \prob_{\bt,U,1}(B)d\bar{\mu}(\bt)\right| 
		\le 
		\sup_{|f|\le 1}\left|\int f d\mu-\int f d\bar{\mu}\right| 
		=
		V(\mu,\bar{\mu}).
	$$
	Furthermore, $V(\mu,\bar{\mu}) \le \mu(\T_s^c)$ since for any Borel subset $B$ of $\RR^d$ we have $\big|\mu(B)-\bar{\mu}(B)\big|\le  \mu(B\cap\T_s^c)$. Indeed,
	$$ 
	\mu(B)-\bar{\mu}(B)\le \mu(B)-\mu(B\cap \T)= \mu(B\cap \T^c)
	$$
	and 
	$$
	\bar{\mu}(B)-\mu(B) = \frac{\mu(B\cap \T)}{\mu(\T)}-\mu(B\cap \T)- \mu(B\cap \T^c) \ge - \mu(B\cap \T^c).
	$$
	Thus,
	\begin{equation}\label{eq3:lemma_TV}
		V({\mathbb P}_\mu,{\mathbb P}_{\bar{\mu}}) \le \mu(\T_s^c) = \prob\Big(\mathcal{B}\Big(d,\frac{s}{2d}\Big)>s\Big).
	\end{equation}
	Combining this inequality with \eqref{binomial1} we obtain \eqref{eq1:lemma_TV}. To prove \eqref{eq2:lemma_TV}, we use again  \eqref{eq3:lemma_TV} and notice that $\prob\Big(\mathcal{B}\Big(d,\frac{s}{2d}\Big)>s\Big)\le \prob\Big(\mathcal{B}\Big(d,\frac{s}{2d}\Big)\ge 2\Big)$ for any integer $s\ge 1$.
	\end{proof}

\begin{lemma}\label{lemma_concentration_barmu}
	Let $\bar{\mu}$ be defined in \eqref{definition_barmu} with some $\a>0$.
	Then
	\begin{equation}\label{eq1:lemma_concentration_barmu}
		\bar{\mu} \Big(\|\bt\|_2< \frac{\a}{2}\sqrt{s} \Big) \le 2e^{-\frac{s}{16}},
	\end{equation}
	and, for all $s\le 32$,
	\begin{equation}\label{eq2:lemma_concentration_barmu}
		\bar{\mu} \Big(\|\bt\|_2< \frac{\a\sqrt{s}}{4\sqrt{2}} \Big) = \prob\Big(\mathcal{B}\big(d,\frac{s}{2d}\big)= 0\Big).
	\end{equation}
\end{lemma}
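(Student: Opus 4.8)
The plan is to translate both inequalities into elementary statements about the binomial variable $N:=\sum_{i=1}^d\d_i\sim\mathcal{B}\big(d,\tfrac{s}{2d}\big)$, exploiting the atomic structure of $\mu$. A vector $\bt=(\a\d_1\e_1,\dots,\a\d_d\e_d)$ drawn from $\mu$ has every coordinate in $\{0,\a,-\a\}$, so $\|\bt\|_0=N$ and $\|\bt\|_2^2=\a^2N$; consequently $\{\|\bt\|_2<t\}=\{N<(t/\a)^2\}$ for each $t>0$, and $\T_s$ corresponds to the event $\{N\le s\}$. When $(t/\a)^2\le s$ the set $\{N<(t/\a)^2\}$ is already contained in $\T_s$, so that
\begin{equation*}
\bar\mu\big(\|\bt\|_2<t\big)=\frac{\mu\big(\{\|\bt\|_2<t\}\cap\T_s\big)}{\mu(\T_s)}=\frac{\prob\big(N<(t/\a)^2\big)}{\prob(N\le s)}.
\end{equation*}
Both parts then hinge on a crude lower bound for the denominator: since $\esp N=s/2$, Markov's inequality gives $\prob(N\ge s+1)\le\tfrac{s/2}{s+1}<\tfrac12$, hence $\mu(\T_s)=\prob(N\le s)>\tfrac12$ for every integer $s\ge1$.

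For \eqref{eq1:lemma_concentration_barmu} I would take $t=\tfrac{\a}{2}\sqrt s$, so $(t/\a)^2=s/4\le s$ and the displayed identity applies. Applying the lower-tail bound \eqref{binomial2} of Lemma~\ref{binomial} with $p=\tfrac{s}{2d}$ and $\lambda=\tfrac{s}{4\sqrt d}$, chosen so that $-\lambda\sqrt d+dp=s/4$, and using $p(1-p)\le p$, one gets
\begin{equation*}
\prob(N<s/4)\le\prob(N\le s/4)\le\exp\!\Big(-\frac{\lambda^2}{2p(1-p)}\Big)\le\exp\!\Big(-\frac{\lambda^2}{2p}\Big)=e^{-s/16}.
\end{equation*}
Dividing by $\mu(\T_s)>1/2$ yields $\bar\mu\big(\|\bt\|_2<\tfrac{\a}{2}\sqrt s\big)\le 2e^{-s/16}$.

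For \eqref{eq2:lemma_concentration_barmu} I would take $t=\tfrac{\a\sqrt s}{4\sqrt 2}$, so $(t/\a)^2=s/32$. When $s\le 32$ we have $s/32\le1$, and since $N$ is integer-valued this forces $\{N<s/32\}=\{N=0\}=\{\bt=0\}$, an event contained in $\T_s$; the displayed identity then gives $\bar\mu\big(\|\bt\|_2<\tfrac{\a\sqrt s}{4\sqrt 2}\big)=\mu(\{\bt=0\})/\mu(\T_s)=\prob(N=0)/\prob(N\le s)$, which is \eqref{eq2:lemma_concentration_barmu}. The lemma presents no real difficulty; the only steps requiring attention are the passage from $\ell_2$-balls to cardinality events for the discrete measure $\mu$ (keeping track of the normaliser $\mu(\T_s)$) and the choice of $\lambda$ in \eqref{binomial2} that makes the exponent come out as $s/16$ after the bound $p(1-p)\le p$.
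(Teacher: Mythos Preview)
Your argument for \eqref{eq1:lemma_concentration_barmu} is correct. It differs from the paper's route: the paper bounds $\bar\mu(B)\le\mu(B)+e^{-3s/16}$ (a byproduct of the proof of Lemma~\ref{lemma_TV}) and combines this additively with $\mu(B)\le e^{-s/16}$, whereas you use the multiplicative identity $\bar\mu(B)=\mu(B)/\mu(\T_s)$ for $B\subset\T_s$ together with the Markov bound $\mu(\T_s)>1/2$. Your version is more self-contained and avoids the cross-reference; the paper's version gives a marginally sharper constant but needs the earlier lemma.

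For \eqref{eq2:lemma_concentration_barmu} your computation is correct up to, but not including, the last identification. You obtain
\[
\bar\mu\Big(\|\bt\|_2<\tfrac{\a\sqrt s}{4\sqrt2}\Big)=\frac{\prob(N=0)}{\prob(N\le s)},
\]
and then assert that this ``is \eqref{eq2:lemma_concentration_barmu}'', i.e.\ equals $\prob(N=0)$. That is false whenever $s<d$, since then $\prob(N\le s)<1$. The paper's own proof in fact establishes the identity with $\mu$ in place of $\bar\mu$: it writes $\mu(\|\bt\|_2<\tfrac{\a\sqrt s}{4\sqrt2})=\prob(\mathcal B(d,\tfrac{s}{2d})<\tfrac{s}{32})=\prob(N=0)$ for $s\le32$. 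So the discrepancy you silently absorbed is a typo in the lemma statement (it should read $\mu$, not $\bar\mu$), and both you and the paper actually prove the same correct fact. You should state explicitly what you have shown rather than claim the printed equality; for the downstream application in Theorem~\ref{theorem_lowerbound_norm_subgaussian} this distinction is harmless, but as written your last sentence asserts an identity that does not hold.
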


\begin{proof}
First, note that
\begin{equation}\label{eq:lem:barmu}
		\mu \Big(\|\bt\|_2< \frac{\a}{2}\sqrt{s} \Big) = \prob\Big(\mathcal{B}\big(d,\frac{s}{2d}\big)< \frac{s}{4}\Big) \le e^{-\frac{s}{16}}
	\end{equation}
	where the last inequality follows from \eqref{binomial2}. Next, inspection of the proof of Lemma~\ref{lemma_TV} yields that $\bar{\mu}(B)\le {\mu}(B) + e^{-\frac{3s}{16}}$ for any Borel set~$B$. Taking here $B= \{\|\bt\|_2\le \a\sqrt{s}/2\}$ and using~\eqref{eq:lem:barmu} proves \eqref{eq1:lemma_concentration_barmu}. To prove \eqref{eq2:lemma_concentration_barmu}, it suffices to note that $\mu \Big(\|\bt\|_2< \frac{\a\sqrt{s}}{4\sqrt{2}} \Big) = \prob\Big(\mathcal{B}\big(d,\frac{s}{2d}\big)< \frac{s}{32}\Big)$. 
\end{proof}

\begin{lemma}\label{lemma_density}
There exists a probability density $f_0:\RR\to [0, \infty)$ with the following properties: $f_0$ is continuously differentiable, symmetric about 0, supported on $[-3/2,3/2]$, with variance~1 and finite Fisher information $I_{f_0}= \int (f_0'(x))^2(f_0(x))^{-1}dx$. 
\end{lemma}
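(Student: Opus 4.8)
The plan is to construct $f_0$ explicitly as a symmetric mixture of two narrow bumps placed close to $\pm 3/2$. Concentrating the mass near the endpoints of $[-3/2,3/2]$ is what makes it possible to reach variance exactly $1$ while staying inside that interval (a symmetric distribution supported on $[-3/2,3/2]$ has variance at most $9/4$, so variance $1$ is attainable, but only if enough mass sits near $\pm 3/2$). The delicate requirement is finiteness of the Fisher information $I_{f_0}=\int (f_0')^2/f_0$: a $C^1$ density vanishing at the boundary of its support must do so at a controlled rate, and I will secure this by taking each bump to be a squared quadratic, for which $(f_0')^2/f_0$ stays bounded.

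Concretely, I would first fix the building block $p(x)=\tfrac{15}{4}(1-16x^2)^2\,\fcar_{|x|\le 1/4}$. A direct computation shows that $p$ is a probability density, even, supported on $[-1/4,1/4]$, continuously differentiable on all of $\RR$ (both $p$ and $p'$ vanish at $x=\pm1/4$), with variance $\int x^2 p(x)\,dx=\tfrac1{112}$, and with finite Fisher information: on $(-1/4,1/4)$ the ratio $(p'(x))^2/p(x)$ equals a positive constant times $x^2$, hence $I_p=\int (p')^2/p<\infty$.

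Next, for $c\in(1/4,5/4)$ I would set $f_c(x)=\tfrac12 p(x-c)+\tfrac12 p(x+c)$. Since $c>1/4$ the two translates have disjoint supports, so $f_c$ is a probability density, continuously differentiable, symmetric about $0$, and supported on $[-c-\tfrac14,\,c+\tfrac14]\subseteq[-3/2,3/2]$; moreover $f_c$ agrees with $\tfrac12 p(\cdot\mp c)$ on each of its two bumps, so $I_{f_c}=I_p<\infty$. Representing $Y\sim f_c$ as $Y=\varepsilon(c+Z)$ with $\varepsilon$ a Rademacher variable, $Z\sim p$, and $\varepsilon,Z$ independent, gives $\esp(Y)=0$ and $\mathrm{Var}(Y)=\esp(Y^2)=c^2+\tfrac1{112}$.

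Finally I would invoke the intermediate value theorem: $c\mapsto c^2+\tfrac1{112}$ is continuous on $(1/4,5/4)$ and takes the value $\tfrac1{14}<1$ near $c=1/4$ and the value $\tfrac{11}{7}>1$ near $c=5/4$, so there is $c^\ast\in(1/4,5/4)$ with $\mathrm{Var}(f_{c^\ast})=1$. Setting $f_0:=f_{c^\ast}$ then produces a density with all the stated properties. No step is conceptually hard; the only point requiring care is the simultaneous bookkeeping of three constraints — $C^1$ smoothness up to the boundary, support inside $[-3/2,3/2]$, and variance equal to $1$ — which is precisely why the bump width ($1/4$) is chosen small enough that the centers $\pm c$ can be pushed far enough out, and why the bump profile is chosen polynomial so that finiteness of the Fisher information is transparent.
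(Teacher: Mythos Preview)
Your proof is correct and follows essentially the same approach as the paper: both construct $f_0$ as a symmetric mixture $\tfrac12 K(\cdot-c)+\tfrac12 K(\cdot+c)$ of two disjoint translates of a smooth compactly supported bump $K$ with finite Fisher information, then tune the placement so that the variance equals $1$ while the support stays in $[-3/2,3/2]$. The only cosmetic difference is that the paper takes $K(x)=\cos^2(\pi x/2)\fcar_{|x|\le1}$ together with an additional scale parameter, whereas you fix the bump width from the start and use the polynomial profile $(1-16x^2)^2$; both choices make $(K')^2/K$ bounded and hence the Fisher information finite.
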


\begin{proof}
Let $K:\RR\to [0, \infty)$ be any probability density, which is continuously differentiable, symmetric about 0, supported on $[-1,1]$, and has finite Fisher information $I_K$, for example, the density $K(x)= \cos^2(\pi x/2) \fcar_{|x|\le 1}$. Define
$f_0(x)= [K_h(x+(1-\varepsilon)) + K_h(x-(1-\varepsilon))]/2$ where $h>0$ and $\varepsilon \in (0,1)$ are constants to be chosen, and $K_h(u)=K(u/h)/h$. Clearly, we have $I_{f_0}<\infty$ since $I_{K}<\infty$.  It is straightforward to check that the variance of $f_0$ satisfies $\int x^2 f_0(x) dx = (1-\varepsilon)^2 + h^2 \s_K^2$ where $\s_K^2  = \int u^2 K(u)du$. Choosing $h=\sqrt{2\varepsilon-\varepsilon^2}/\s_K$  and $\varepsilon  \le \s_K^2/8$ guarantees that $\int x^2 f_0(x) dx = 1$ and the support of $f_0$ is contained in   $[-3/2,3/2]$.
\end{proof}
{
\begin{lemma}\label{lemma:lowerbound:norm:variance}
       Let $\tau>0$, $a>4$ and let $s,d$ be integers satisfying $1\leq s \leq d$. Let 
$\mathcal{P}$ be any subset of $\mathcal{P}_{a,\tau}$. Assume that for some function $\phi(s,d)$ 
of $s$ and $d$ and for some positive constants $c_{1},c_{2},c'_{1},c'_{2}$ we have
       \begin{equation}\label{eq1:lemma:lowerbound:norm:variance}
       \underset{\hat{T}}{\inf} \underset{P_\xi \in \mathcal{P}}{\sup}\,
       \underset{\s>0}{\sup}\,\underset{\|\bt\|_{0}\leq s}{\sup}\mathbf{P}_{\bt, P_{\xi},\sigma}
\left( \left| \frac{\hat{T}}{\s^{2}}-1\right|\geq \frac{c_{1}}{\sqrt{d}}\right) \geq c_1^{'},
       \end{equation}
       and
       \begin{equation}\label{eq2:lemma:lowerbound:norm:variance}
       \underset{\hat{T}}{\inf} \underset{P_\xi \in \mathcal{P}}{\sup}\,
       \underset{\s>0}{\sup}\, \underset{\|\bt\|_{0}\leq s}{\sup}\mathbf{P}_{\bt, P_{\xi},\sigma}\left( \left| \frac{\hat{T}-\|\bt\|_{2}}{\s}\right|\geq c_{2}\phi(s,d)\right) \geq c_2^{'}.
       \end{equation}
       Then 
       $$
       \underset{\hat{T}}{\inf} \underset{P_\xi \in \mathcal{P}}{\sup}\,
       \underset{\s>0}{\sup}\, \underset{\|\bt\|_{0}\leq s}{\sup}\mathbf{P}_{\bt, P_{\xi},\sigma}\left( \left| \frac{\hat{T}}{\s^{2}}-1\right|\geq c_{3}\max\left(\frac{1}{\sqrt{d}},\frac{\phi^{2}(s,d)}{d}\right)\right) 
\geq c_3^{'}
       $$
       for some constants $c_{3},c_{3}'>0$.
       \end{lemma}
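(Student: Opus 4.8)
The plan is to split the argument according to the size of $\phi(s,d)$ relative to $\sqrt d$. Fix a large constant $K>0$, to be chosen at the end and depending only on $a,\tau,c_2,c_2'$, and distinguish two regimes. If $\phi^2(s,d)\le K\sqrt d$, then $\max\!\big(1/\sqrt d,\phi^2(s,d)/d\big)\le K/\sqrt d$, so taking $c_3\le c_1/K$ makes the event $\{|\hat T/\s^2-1|\ge c_1/\sqrt d\}$ contained in $\{|\hat T/\s^2-1|\ge c_3\max(1/\sqrt d,\phi^2/d)\}$; hence (all suprema being as in the hypotheses) $\inf_{\hat T}\sup\,\prob(|\hat T/\s^2-1|\ge c_3\max(1/\sqrt d,\phi^2/d))\ge\inf_{\hat T}\sup\,\prob(|\hat T/\s^2-1|\ge c_1/\sqrt d)\ge c_1'$ by \eqref{eq1:lemma:lowerbound:norm:variance}. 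The substance of the proof lies in the complementary regime $\phi^2(s,d)>K\sqrt d$, where $\max(1/\sqrt d,\phi^2/d)=\phi^2/d$; there I will deduce the bound from the norm lower bound \eqref{eq2:lemma:lowerbound:norm:variance} by a reduction.

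\textbf{The reduction.} Assume for contradiction that some estimator $\hat T$ satisfies $\sup_{P_\xi\in\calP}\sup_{\s>0}\sup_{\|\bt\|_0\le s}\prob_{\bt,P_\xi,\s}\big(|\hat T/\s^2-1|\ge c_3\phi^2/d\big)<c_3'$, with $c_3,c_3'$ small and to be fixed. Introduce the plug-in norm estimator $\hat N=\big|\sum_{j=1}^d Y_j^2-d\hat T\big|^{1/2}$, which is a function of the data only, hence admissible for the norm problem over $\calP$. Writing $\bxi=(\xi_1,\dots,\xi_d)$ and using $Y_j=\t_j+\s\xi_j$, one has $\sum_{j}Y_j^2-d\hat T=\|\bt\|_2^2+B$ with $B:=2\s(\bt,\bxi)+\s^2(\|\bxi\|_2^2-d)+d(\s^2-\hat T)$. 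The elementary bounds $\big|\sqrt{|x^2+y|}-x\big|\le\sqrt{|y|}$ for $x\ge0$, and $\big|\sqrt{|x^2+y|}-x\big|\le|y|/x$ for $x>0$ (both valid for every real $y$), applied with $x=\|\bt\|_2$ and $y=B$, give
\[
|\hat N-\|\bt\|_2|\le\sqrt{|B|}\quad\text{always,}\qquad\text{and}\qquad|\hat N-\|\bt\|_2|\le|B|/\|\bt\|_2\ \text{ when }\bt\ne0 .
\]
Moreover $|B|\le 2\s|(\bt,\bxi)|+\s^2|\|\bxi\|_2^2-d|+d|\s^2-\hat T|$, and on the event $\{|\hat T/\s^2-1|<c_3\phi^2/d\}$ (of probability $>1-c_3'$) the last term is $<c_3\s^2\phi^2$. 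I will also use $\esp|(\bt,\bxi)|\le\|\bt\|_2$, $\esp\big(|(\bt,\bxi)|/\|\bt\|_2\big)^2=1$, and $\esp(\|\bxi\|_2^2-d)^2\le Cd$ for a constant $C=C(a,\tau)$ — finiteness here uses $\esp\xi_1^4<\infty$, which is where the hypothesis $a>4$ enters.

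\textbf{Two cases in $\bt$.} Fix $(P_\xi,\s,\bt)$. If $\|\bt\|_2\ge\s\phi$, use $|\hat N-\|\bt\|_2|\le|B|/\|\bt\|_2$ and bound the three contributions by Markov's and Chebyshev's inequalities: $2\s|(\bt,\bxi)|/\|\bt\|_2$ exceeds $c_2\s\phi/3$ with probability $\le C/(c_2^2\phi^2)\le C/(c_2^2K)$; $\s^2|\|\bxi\|_2^2-d|/\|\bt\|_2\le\s|\|\bxi\|_2^2-d|/\phi$ exceeds $c_2\s\phi/3$ with probability $\le Cd/(c_2^2\phi^4)\le C/(c_2^2K^2)$; and $d|\s^2-\hat T|/\|\bt\|_2<c_3\s\phi\le c_2\s\phi/3$ once $c_3\le c_2/3$. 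If instead $\|\bt\|_2<\s\phi$ (in particular if $\bt=0$), use $|\hat N-\|\bt\|_2|\le\sqrt{|B|}\le\sqrt{2\s|(\bt,\bxi)|}+\s\sqrt{|\|\bxi\|_2^2-d|}+\sqrt{d|\s^2-\hat T|}$ and force each summand below $c_2\s\phi/3$: $\sqrt{2\s|(\bt,\bxi)|}\ge c_2\s\phi/3$ with probability $\le C\|\bt\|_2/(c_2^2\s\phi^2)\le C/(c_2^2\phi)\le C/(c_2^2\sqrt K)$; $\s\sqrt{|\|\bxi\|_2^2-d|}\ge c_2\s\phi/3$ with probability $\le C\sqrt d/(c_2^2\phi^2)\le C/(c_2^2K)$, and this is precisely the step where the \emph{strict} gap $\phi^2>K\sqrt d$ (not just $\phi^2\ge\sqrt d$) is used; and $\sqrt{d|\s^2-\hat T|}<\sqrt{c_3}\,\s\phi\le c_2\s\phi/3$ once $c_3\le c_2^2/9$. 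A union bound then yields, in either case, $\prob_{\bt,P_\xi,\s}\big(|\hat N-\|\bt\|_2|\ge c_2\s\phi\big)\le c_3'+\psi(K)$, where $\psi(K)\to0$ as $K\to\infty$ and depends only on $a,\tau,c_2$.

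\textbf{Conclusion.} Choose $K$ large enough that $\psi(K)<c_2'/2$, then $c_3\le\min(c_1/K,\,c_2/3,\,c_2^2/9)$ and $c_3'<c_2'/2$. The last display then states that $\hat N$ estimates $\|\bt\|_2$ uniformly over $\calP$, $\s>0$ and $\|\bt\|_0\le s$ with precision $c_2\s\phi$ and failure probability strictly below $c_2'$, contradicting \eqref{eq2:lemma:lowerbound:norm:variance}. Together with the first regime this proves the lemma with $c_3$ as above and $c_3'=\min(c_1',c_2'/2)$. The one genuinely delicate point is the intrinsic fluctuation $\s^2(\|\bxi\|_2^2-d)$, of order $\s^2\sqrt d$: after taking a square root it lives at scale $\s\,d^{1/4}$, comparable to the target scale $\s\phi$ exactly when $\phi\asymp d^{1/4}$. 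That borderline is handled not by the reduction but by the $1/\sqrt d$ variance lower bound \eqref{eq1:lemma:lowerbound:norm:variance}, and the role of the large constant $K$ is precisely to separate the two mechanisms cleanly.
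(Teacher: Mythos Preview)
Your proof is correct and follows essentially the same approach as the paper: reduce the variance lower bound to the norm lower bound \eqref{eq2:lemma:lowerbound:norm:variance} by building the plug-in norm estimator $\hat N=|\sum_j Y_j^2-d\hat T|^{1/2}$ from an arbitrary variance estimator $\hat T$, and handle the regime $\phi^2\lesssim\sqrt d$ directly via \eqref{eq1:lemma:lowerbound:norm:variance}.

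The only notable difference is in the decomposition. The paper uses the single inequality (derived in \eqref{g2}--\eqref{g3})
\[
\big|\hat N-\|\bt\|_2\big|\le \frac{2\s|(\bt,\bxi)|}{\|\bt\|_2}\fcar_{\bt\ne 0}+\s\sqrt{\big|\|\bxi\|_2^2-d\big|}+\sqrt{d\,|\s^2-\hat T|},
\]
which treats all $\bt$ at once: the cross term is controlled via the $|y|/x$ bound and the remaining two via $\sqrt{|y|}$. Chebyshev on the first two terms then gives the $1/\phi^2$ and $d/\phi^4$ error terms directly, with no case distinction on $\|\bt\|_2$. Your split into $\|\bt\|_2\ge\s\phi$ versus $\|\bt\|_2<\s\phi$, using the $|y|/x$ and $\sqrt{|y|}$ bounds for the \emph{full} remainder $B$ in the respective cases, arrives at the same conclusion but is slightly more laborious. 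The paper's hybrid decomposition is worth noting as a cleaner route; otherwise the arguments are equivalent, and your framing as a proof by contradiction is just a cosmetic rephrasing of the paper's direct inequality chain.
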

       
\begin{proof} 
       Let $\hat{\s}^{2}$ be an arbitrary estimator of $\s^{2}$. Based on $\hat{\s}^{2}$, we can
construct an estimator $\hat{T}= \hat{N}^*$ of $\|\bt\|_{2}$ defined by formula \eqref{eq:C}, 
case $s>\sqrt{d}$.
  It follows from \eqref{g2}, \eqref{g3} and \eqref{eq2:lemma:lowerbound:norm:variance} that
       \begin{align*}
       c_{2}'&\leq \mathbf{P}\left(2|(\bt,\bxi)| \geq c_{2}\|\bt\|_{2}\phi(s,d)/3\right) + \mathbf{P}\left(\sqrt{|\|\bxi\|_{2}^{2}-d|} \geq c_{2}\phi(s,d)/3\right)
\\
& \qquad + \mathbf{P}\left( \sqrt{d\left|\frac{\hat{\s}^{2}}{\s^{2}}-1\right|}
\geq c_{2}\phi(s,d)/3\right),
       \end{align*}
    where we write for brevity $\mathbf{P} =\mathbf{P}_{\bt, P_{\xi},\sigma}$.   Hence
       $$
       \mathbf{P}\left( \left|\frac{\hat{\s}^{2}}{\s^{2}}-1\right|
\geq c_{2}^{2}\phi^{2}(s,d)/(9d)\right) \geq c_{2}'-c^{*}\max\left(\frac{d }{\phi^{4}(s,d)},
\frac{1}{\phi^{2}(s,d)}\right)
       $$
       for some constant $c^{*}>0$ depending only on $a$ and $\tau$. If $\phi^{2}(s,d) > 
\max\left( \sqrt{\frac{2c^{*}d}{c_{2}'}},\frac{2c^{*}}{c_{2}'} \right)$, then
       $$
       \mathbf{P}\left( \left|\frac{\hat{\s}^{2}}{\s^{2}}-1\right|\geq C 
\max\left( \frac{1}{\sqrt{d}},\frac{\phi^{2}(s,d)}{d}\right)\right) \geq c_{2}'/2.
       $$
        If 
$\phi^{2}(s,d) \le \max\left( \sqrt{\frac{2c^{*}d}{c_{2}'}},\frac{2c^{*}}{c_{2}'} \right)$, then
$\max\left( \frac{1}{\sqrt{d}},\frac{\phi^{2}(s,d)}{d}\right)$ is of order $\frac{1}{\sqrt{d}}$ and
the result follows from \eqref{eq1:lemma:lowerbound:norm:variance}.
       \end{proof}

}
\section{Acknowledgements}
The work of O.~Collier has been conducted as part of the project Labex MME-DII (ANR11-LBX-0023-01).
The work of M.Ndaoud and A.B. Tsybakov was supported by GENES and by the French National Research
    Agency (ANR) under the grants IPANEMA (ANR-13-BSH1-0004-02) and Labex Ecodec (ANR-11-LABEX-0047).

\end{document}